\documentclass[12pt,reqno]{amsart}


\usepackage[utf8]{inputenc}

\usepackage{amsmath} 
\usepackage{amsthm} 
\usepackage{amssymb} 
\usepackage{amscd} 

\usepackage{euscript}
\DeclareMathAlphabet{\mathpzc}{OT1}{pzc}{m}{it}
\DeclareMathAlphabet\euscr{T1}{qzc}{m}{n}

\usepackage{emptypage}

\usepackage{enumitem} 
\usepackage{mathtools} 
\usepackage{mathrsfs} 
\usepackage{hyperref} 
\hypersetup{
    linktocpage = true,
    colorlinks=true,
    linkcolor=red,
}
\usepackage{esint} 

\usepackage{graphicx}
\usepackage{subcaption}

\usepackage{pgf,tikz,pgfplots}
\usetikzlibrary{arrows}
\usetikzlibrary{intersections}
\usetikzlibrary{fadings}

\setlength{\marginparwidth}{2.3cm}


\newtheorem{theorem}{Theorem}[section]
\newtheorem*{theorem*}{Theorem}
\newtheorem{proposition}[theorem]{Proposition}
\newtheorem{lemma}[theorem]{Lemma}
\newtheorem{corollary}[theorem]{Corollary}

\theoremstyle{definition}
\newtheorem{definition}[theorem]{Definition}

\theoremstyle{remark}
\newtheorem{remark}[theorem]{Remark}
\newtheorem*{remark*}{Remark}


\newcommand{\con}[1]{\mathbb{#1}}
\newcommand{\R}{\con{R}} 
\newcommand{\N}{\con{N}} 
\newcommand{\Sph}{\con{S}} 
\renewcommand{\H}{\con{H}}
\newcommand{\ccal}{\mathscr{C}}

\newcommand{\ical}{\mathcal{I}}
\newcommand{\lcal}{\mathcal{L}}
\newcommand{\ncal}{\mathcal{N}}
\newcommand{\ocal}{\mathcal{O}}

\newcommand{\e}{\mathrm{e}}

\makeatletter
\newcommand{\leqnomode}{\tagsleft@true\let\veqno\@@leqno}
\newcommand{\reqnomode}{\tagsleft@false\let\veqno\@@eqno}
\makeatother

\newcommand{\norm}[1]{\left \| {#1} \right \|}

\newcommand{\laplacian}{\Delta}
\newcommand{\s}{\gamma}
\newcommand{\fraclaplacian}{(-\Delta)^\s}

\renewcommand{\d}{\,\mathrm{d}} 
\newcommand{\bpar}[1]{\left ( {#1}\right )}
\newcommand{\setcond}[2]{\left \{ #1 \ : \ #2  \right \}}

\newcommand{\average}{\fint}
\newcommand\beqc[1]{\left\{\begin{array}{#1}}
\newcommand\eeqc{\end{array} \right.}

\def\PDEsystem{rcll}
\def\bmatrix{\begin{pmatrix}}
\def\ematrix{\end{pmatrix}}

\DeclareMathOperator{\dist}{dist}

\DeclareMathOperator{\sign}{sign}

\newcommand{\usub}{\underline{u}}
\newcommand{\usup}{\overline{u}}
\newcommand{\vsub}{\underline{v}}
\newcommand{\vsup}{\overline{v}}
\def\ds{\displaystyle}

\setlength{\textwidth}{155mm}
\setlength{\textheight}{210mm}
\oddsidemargin=5mm
\evensidemargin=5mm

\numberwithin{equation}{section}

\graphicspath {{Images/}}

\hyphenation{auto-ma-ti-cally}


\title[Semilinear integro-differential equations II]{Semilinear integro-differential equations, II: one-dimensional and saddle-shaped solutions to the Allen-Cahn equation}
\author{Juan-Carlos Felipe-Navarro}
\address{J.C. Felipe-Navarro:
Universitat Polit\`ecnica de Catalunya and BGSMath, Departament de Matem\`{a}tiques, Diagonal 647, 08028 Barcelona, Spain}
\email{juan.carlos.felipe@upc.edu}

\author{Tomás Sanz-Perela}
\address{T. Sanz-Perela:
	Universitat Polit\`ecnica de Catalunya and BGSMath, Departament de Matem\`{a}tiques, Diagonal 647, 08028 Barcelona, Spain
	and
	School of Mathematics, The University of Edinburgh,
	Peter Tait Guthrie Road, EH9 3FD Edinburgh, UK}
\email{tomas.sanz-perela@ed.ac.uk}

\thanks{Both authors acknowledge financial support from the Spanish Ministry of Economy and Competitiveness (MINECO), through the María de Maeztu Programme for Units of Excellence in R\&D (MDM-2014-0445-16-4 and MDM-2014-0445 respectively). Moreover, both authors are supported by MINECO grants MTM2014-52402-C3-1-P and MTM2017-84214-C2-1-P, are members of the Barcelona Graduate School of Mathematics (BGSMath) and are part of the Catalan research group 2017 SGR 01392.}

\keywords{Integro-differential semilinear equation, saddle-shaped solution, odd symmetry, Simons cone, symmetry results}

\begin{document}

\begin{abstract}
This paper addresses saddle-shaped solutions to the semilinear equation $L_K u = f(u)$ in $\mathbb{R}^{2m}$, where $L_K$ is a linear elliptic integro-differential operator with a radially symmetric kernel $K$, and $f$ is of Allen-Cahn type. Saddle-shaped solutions are doubly radial, odd with respect to the Simons cone $\{(x', x'') \in \mathbb{R}^m \times \mathbb{R}^m \, : \, |x'| = |x''|\}$, and vanish only in this set.

We establish the uniqueness and the asymptotic behavior of the saddle-shaped solution. For this, we prove a Liouville type result, the one-dimensional symmetry of positive solutions to semilinear problems in a half-space, and maximum principles in ``narrow'' sets. The existence of the solution was already proved in part I of this work.
\end{abstract}

\maketitle

\section{Introduction}
\label{Sec:Introduction}

In this paper we study saddle-shaped solutions to the semilinear equation
\begin{equation}
\label{Eq:NonlocalAllenCahn}
L_K u = f(u) \quad \textrm{ in } \R^{2m},
\end{equation}
where $L_K$ is a linear integro-differential operator of the form \eqref{Eq:DefOfLu} and $f$ is of Allen-Cahn type. These  solutions (see Definition~\ref{Def:SaddleShapedSol} below) are particularly interesting in relation to the nonlocal version of a conjecture by De Giorgi, with the aim of finding a counterexample in high dimensions. Moreover, this problem is related to the regularity theory of nonlocal minimal surfaces (see Subsection~\ref{Subsec:DeGiorgi}).

Previous to this article and its first part \cite{FelipeSanz-Perela:IntegroDifferentialI}, there are only three works devoted to saddle-shaped solutions to the equation \eqref{Eq:NonlocalAllenCahn} with $L_K$ being the fractional Laplacian. In  \cite{Cinti-Saddle,Cinti-Saddle2}, Cinti proved the existence of a saddle-shaped solution as well as some qualitative properties, such as asymptotic behavior, monotonicity properties, and instability in even dimensions $2m\leq 6$. In a previous paper by the authors \cite{Felipe-Sanz-Perela:SaddleFractional}, further properties of these solutions were proved, the main ones being uniqueness and, when $2m\geq 14$, stability.
The possible stability in dimensions $8$, $10$, and $12$ is still an open problem, as well as the possible minimality of this solution in dimensions $2m \geq 8$.
Concerning saddle-shaped solutions to the classical Allen-Cahn equation $-\laplacian u = f(u)$, the same results are established; see \cite{Cabre-Saddle} and the references therein. The stability of the saddle-shaped solution to $-\laplacian u = u - u^3$ in dimensions $n= 8$, $10$, and $12$ has been recently announced \cite{LiuWangWei-StabilitySaddle}.

The present paper together with its first part \cite{FelipeSanz-Perela:IntegroDifferentialI} are the first ones in the literature studying saddle-shaped solutions for general integro-differential equations of the form \eqref{Eq:NonlocalAllenCahn}. 
In the three previous papers \cite{Cinti-Saddle, Cinti-Saddle2, Felipe-Sanz-Perela:SaddleFractional}, the extension problem for the fractional Laplacian was a key tool. 
Since this technique cannot be carried out for general integro-differential operators, some purely nonlocal techniques were developed in \cite{FelipeSanz-Perela:IntegroDifferentialI} and we exploit them in this article.

In part~I, we established an appropriate setting to study solutions to \eqref{Eq:NonlocalAllenCahn} that are doubly radial and odd with respect to the Simons cone, a property that is satisfied by saddle-shaped solutions (see Subsection~\ref{Subsec:Integro-differential setting}). 
We found an alternative and useful expression for the operator $L_K$ when acting on doubly radial odd functions ---see \eqref{Eq:OperatorOddF}. 
This was used to establish maximum principles for odd functions under a convexity assumption on the kernel $K$ of the operator $L_K$ ---see \eqref{Eq:SqrtConvex}. 
Moreover, we proved an energy estimate for doubly radial and odd minimizers of the energy associated to the equation, as well as the existence of saddle-shaped solutions to \eqref{Eq:NonlocalAllenCahn}.

In the current paper, we further study saddle-shaped solutions to \eqref{Eq:NonlocalAllenCahn}, by proving their uniqueness and asymptotic behavior.
To establish the uniqueness (Theorem~\ref{Th:ExistenceUniqueness}) we use a maximum principle for the linearized operator $L_K - f'(u)$ (Proposition~\ref{Prop:MaximumPrincipleLinearized}).
To prove the asymptotic behavior (Theorem~\ref{Th:AsymptoticBehaviorSaddleSolution}), we use two ingredients: a Liouville type theorem (Theorem~\ref{Th:LiouvilleSemilinearWholeSpace}) and a one-dimensional symmetry result (Theorem~\ref{Th:SymmHalfSpace}), both for semilinear equations of the form \eqref{Eq:NonlocalAllenCahn} under some hypotheses on $f$. 
The first of these results is obtained by adapting the ideas of Berestycki, Hamel, and Nadirashvili \cite{BerestyckiHamelNadi} to the nonlocal framework, and requires a Harnack inequality and a parabolic maximum principle. The second one requires the sliding method and the moving planes argument, extended to a general integro-differential setting. 

In addition to the previous results, in this paper we establish further properties of the so-called \emph{layer solution} $u_0$ (see Section~\ref{Sec:Asymptotic}). 
We also include an alternative proof of the existence of the saddle-shaped solution using the monotone iteration method (as an alternative to the proof in \cite{FelipeSanz-Perela:IntegroDifferentialI} where we used variational techniques).

Equation \eqref{Eq:NonlocalAllenCahn} is driven by a linear integro-differential operator $L_K$ of the form
\begin{equation}
\label{Eq:DefOfLu}
L_K w(x) = \int_{\R^n} \big(w(x) - w(y)\big) K(x-y)\d y.
\end{equation}
The most canonical example of such operators is the fractional Laplacian, which corresponds to the kernel $K(z) = c_{n, \s} |z|^{-n-2\s}$, where $\s \in (0,1)$ and $c_{n, \s}$ is a normalizing positive constant ---see \eqref{Eq:ConstantFracLaplacian}. Note that some of the results in this paper are new even for the fractional Laplacian (namely Proposition~\ref{Prop:MaximumPrincipleLinearized} and the statement on odd solutions of Theorem~\ref{Th:SymmHalfSpace}), while others are only proved in the literature using the extension problem (in contrast with our proofs).

Throughout the paper, we assume that $K$ is symmetric, i.e.,
\begin{equation}
\label{Eq:ParityK}
K(z) = K(-z),
\end{equation}
and that $L_K$ is uniformly elliptic, that is,
\begin{equation}
\label{Eq:Ellipticity}
\lambda \dfrac{c_{n,\s}}{|z|^{n+2\s}} \leq K(z) \leq \Lambda \dfrac{c_{n,\s}}{|z|^{n+2\s}}\,, 
\end{equation}
where $\lambda$ and $\Lambda$ are two positive constants. Conditions \eqref{Eq:ParityK} and \eqref{Eq:Ellipticity} are frequently adopted since they yield Hölder regularity of solutions (see \cite{RosOton-Survey,SerraC2s+alphaRegularity}). The family of linear operators satisfying these two conditions is the so-called $\lcal_0(n,\s,\lambda, \Lambda)$ ellipticity class. For short we will usually write $\lcal_0$ and we will make explicit the parameters only when needed. 

When dealing with doubly radial functions we will assume that the operator $L_K$ is rotation invariant, that is, $K$ is radially symmetric. This extra assumption allows us to rewrite the operator in a suitable form when acting on doubly radial odd functions, as explained next.


\subsection{Integro-differential setting for odd functions with respect to the Simons cone}
\label{Subsec:Integro-differential setting}

In this subsection we present the basic definitions and terminology used along the paper.
We also recall the setting established in part~I~\cite{FelipeSanz-Perela:IntegroDifferentialI} to study the saddle-shaped solution (we refer to that article for more details). 

First, we present the Simons cone, a central object along this paper. It is defined in $\R^{2m}$ by
$$
\mathscr{C} := \setcond{x = (x', x'') \in \R^m \times \R^m = \R^{2m}}{|x'| = |x''|}\,.
$$
This cone is of importance in the theory of (local and nonlocal) minimal surfaces (see Subsection~\ref{Subsec:DeGiorgi}). We will use the letters $\ocal$ and $\ical$ to denote each of the parts in which $\R^{2m}$ is divided by the cone $\ccal$:
$$
\ocal:= \setcond{x = (x', x'') \in \R^{2m}}{|x'| > |x''|} \ \textrm{ and } \
\ical:= \setcond{x = (x', x'') \in \R^{2m}}{|x'| < |x''|}.
$$

Both $\ocal$ and $\ical$ belong to a family of sets in $\R^{2m}$ which are called of \emph{double revolution}. These are sets that are invariant under orthogonal transformations in the first $m$ variables, as well as under orthogonal transformations in the last $m$ variables. 
Related to this concept, we say that a function $w:\R^{2m}  \to \R$ is \emph{doubly radial} if it depends only on the modulus of the first $m$ variables and on the modulus of the last $m$ ones, i.e., $w(x) = w(|x'|,|x''|)$. 

We recall now the definition of $(\cdot)^\star$, an isometry that played a significant role in part~I. 
It is defined by
\begin{equation*}
\begin{matrix}
(\cdot)^\star \colon & \R^{2m}= \R^{m}\times \R^{m}  &\to&  \R^{2m}= \R^{m}\times \R^{m}  \\
& x = (x',x'') &\mapsto & x^\star = (x'',x')\,.
\end{matrix}
\end{equation*}
Note that this isometry is actually an involution that maps $\ocal$ into $\ical$ (and vice versa) and leaves the cone $\ccal$ invariant ---although not all points in $\ccal$ are fixed points of $(\cdot)^\star$, for instance, $x= (1, 0, \ldots, 0,1)$. Taking into account this transformation, we say that a doubly radial function $w$ is \emph{odd with respect to the Simons cone} if $w(x) = -w(x^\star)$.

Now we can define saddle-shaped solutions.
\begin{definition}
	\label{Def:SaddleShapedSol}
	We say that a bounded solution $u$ to \eqref{Eq:NonlocalAllenCahn} is a \emph{saddle-shaped solution} (or simply \emph{saddle solution}) if
	\begin{enumerate}
		\item $u$ is doubly radial.
		\item $u$ is odd with respect to the Simons cone.
		\item $u > 0$ in $\ocal = \{|x'| > |x''|\} $.
	\end{enumerate}
\end{definition}
Note that these solutions are even with respect to the coordinate axes and that their zero level set is the Simons cone $\mathscr{C} = \{|x'|=|x''|\}$.

In part~I, we developed a purely nonlocal theory regarding the integro-differential operator $L_K$ when acting on odd solutions with respect to the Simons cone.
First, recall that if $K$ is a radially symmetric kernel we can rewrite the operator  acting on a doubly radial function $w$ as
$$
L_K w(x) = \int_{\R^{2m}} \big(w(x) - w(y)\big) \overline{K}(x,y) \d y\,,
$$
where $\overline{K}$ is doubly radial in both variables and is defined by
\begin{equation*}
\overline{K}(x,y) := \average_{O(m)^2} K(|Rx - y|)\d R\,.
\end{equation*}
Here, $\d R$ denotes integration with respect to the Haar measure on $O(m)^2$, where $O(m)$ is the orthogonal group of $\R^m$. 
It is important to notice that, in contrast with $K=K(x-y)$, the kernel $\overline{K}$ is no longer translation invariant (i.e., it is a function of $x$ and $y$ but not of the difference $x-y$).

If we consider doubly radial functions that are, in addition, odd with respect to the Simons cone, we can use the involution $(\cdot)^\star$ to find that
\begin{equation}
\label{Eq:OperatorOddF}
L_K w (x) = \int_{\ocal} \big(w(x) - w(y) \big) \big(\overline{K}(x, y) - \overline{K}(x, y^\star)  \big) \d y +  2 w(x) \int_{\ocal} \overline{K}(x, y^\star) \d y \,.
\end{equation}
Furthermore,
\begin{equation}
\label{Eq:ZeroOrderTerm}
\frac{1}{C} \dist(x,\ccal)^{-2\s} \leq \int_{\ocal} \overline{K}(x, y^\star) \d y \leq C \dist(x,\ccal)^{-2\s},
\end{equation}
with $C>0$ depending only on $m, \s, \lambda$, and $\Lambda$.

Note that the expression \eqref{Eq:OperatorOddF} has an integro-differential part plus a term of order zero with a positive coefficient. 
Thus, the most natural assumption to make in order to have an elliptic operator (when acting on doubly radial odd functions) is that the kernel of the integro-differential term is positive. 
That is,
\begin{equation}
	\label{Eq:KernelInequality}
	\overline{K}(x,y) - \overline{K}(x, y^\star)  > 0 \quad \text{ for every }x,y \in \ocal\,.
\end{equation}
One of the main results in part~I established a necessary and sufficient condition on the original kernel $K$ for $L_K$ to have a positive kernel when acting on doubly radial odd functions. It turns to be
	\begin{equation}
	\label{Eq:SqrtConvex}	
	K(\sqrt{\tau}) \text{ is a strictly convex function of }\tau\,.
	\end{equation}

The positivity of the kernel of $L_K$ when acting on doubly-radial odd functions was crucial in order to obtain the existence of the saddle-shaped solution.
As we will see, it is essential as well to establish the uniqueness.
Therefore, \eqref{Eq:SqrtConvex} will be a key assumption in some of our results.

\subsection{Main results}
\label{Subsec:Main results}

Through all the paper we will assume that $f$, the nonlinearity in \eqref{Eq:NonlocalAllenCahn}, is a $C^1$ function satisfying
\begin{equation}
\label{Eq:Hypothesesf}
f \textrm{ is odd, } \quad f(\pm 1)=0, \quad \text{ and } \quad f \textrm{ is strictly concave in }  (0,1).
\end{equation}
It is easy to see that these properties yield $f>0$ in $(0,1)$, $f'(0)>0$ and $f'(\pm 1) < 0$. 

In some statements in this article, we will denote by $L^1_\s(\R^n)$ the space of measurable functions $w$ satisfying
$$
\int_{\R^n} \dfrac{|w(x)|}{1+|x|^{n+2\s}}\d x < +\infty\,.
$$
This regularity will be required on a function $w$ (in addition to $C^\alpha$ Hölder continuity, with $\alpha > 2\s$) to ensure that $L_K w$ is well-defined.

The first main result of this paper concerns uniqueness of saddle-shaped solution.

\begin{theorem}
	\label{Th:ExistenceUniqueness}
	Let $f$ satisfy \eqref{Eq:Hypothesesf}. Let $K$ be a radially symmetric kernel satisfying the convexity assumption \eqref{Eq:SqrtConvex} and such that $L_K\in \lcal_0(2m, \s, \lambda, \Lambda)$. 
	
	Then, for every even dimension $2m \geq 2$, there exists a unique saddle-shaped solution $u$ to \eqref{Eq:NonlocalAllenCahn}. In addition, $u$ satisfies $|u|<1$ in $\R^{2m}$.
\end{theorem}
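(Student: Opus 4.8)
The plan is to deduce existence from part~I (recalling also an alternative monotone-iteration construction), to obtain the bound $|u|<1$ from comparison together with a strong maximum principle, and to prove uniqueness by comparing an arbitrary saddle solution with a maximal one via the maximum principle for the linearized operator, Proposition~\ref{Prop:MaximumPrincipleLinearized}.

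\emph{Existence and the bound.} Existence of a saddle-shaped solution was already established in part~I \cite{FelipeSanz-Perela:IntegroDifferentialI}; I would also recall the monotone-iteration construction, started from the doubly radial odd supersolution $\overline w$ equal to $+1$ in $\ocal$ and $-1$ in $\ical$ --- which by \eqref{Eq:OperatorOddF} satisfies $L_K\overline w=2\overline w\int_{\ocal}\overline K(x,y^\star)\d y>0=f(1)$ in $\ocal$ --- and from a small doubly radial odd subsolution built using $f'(0)>0$. This yields a solution with $|u|\le 1$; the same super-/sub-solutions give $|u|\le 1$ for every saddle solution by comparison. To upgrade this to $|u|<1$: if $u(x_0)=1$, then necessarily $x_0\in\ocal$, and by \eqref{Eq:OperatorOddF}, \eqref{Eq:KernelInequality} and positivity of the zero-order coefficient, $L_Ku(x_0)\le 0$ with equality only if $u\equiv 1$ on $\ocal$; but $L_Ku(x_0)=f(1)=0$, so $u\equiv 1$, contradicting $u=0$ on $\ccal$. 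The bound $u>-1$ is symmetric.

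\emph{Uniqueness.} Let $\overline u$ be the maximal saddle solution (the limit of the monotone iteration started from $\overline w$); order-preservation of the iteration and the fact that every saddle solution is a subsolution give $u\le\overline u$ in $\ocal$ for any saddle solution $u$. Fix such a $u$ and set $w:=\overline u-u\ge 0$ in $\ocal$; it is bounded, doubly radial, odd, and vanishes on $\ccal$. Since $0<u\le\overline u\le 1$ in $\ocal$ and $f$ is concave on $(0,1)$ with $f(0)=0$, the tangent-line inequality at $u$ gives
\[
L_Kw=f(\overline u)-f(u)\le f'(u)\,w\qquad\text{in }\ocal,
\]
so $w$ is a subsolution of $\mathcal L:=L_K-f'(u)$; evaluating that same tangent line at $0$ gives $f(u)\ge f'(u)\,u$, hence $\mathcal Lu=f(u)-f'(u)\,u\ge 0$ in $\ocal$, i.e.\ $u$ is a positive supersolution of $\mathcal L$.

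\emph{Conclusion and the main obstacle.} I would then invoke Proposition~\ref{Prop:MaximumPrincipleLinearized}: the point is that by \eqref{Eq:OperatorOddF} the operator $\mathcal L$ is a positive-kernel integro-differential operator (using \eqref{Eq:KernelInequality}) plus a zero-order term with coefficient $2\int_{\ocal}\overline K(x,y^\star)\d y-f'(u(x))$, which by \eqref{Eq:ZeroOrderTerm} blows up like $\dist(x,\ccal)^{-2\s}$ near $\ccal$ and thus absorbs the non-coercive contribution of $f'(0)>0$; together with the positive supersolution $u$ this yields a maximum principle on the unbounded set $\ocal$. Concretely, setting $\sigma:=\sup_{\ocal}w/u$ --- finite since $w$ is bounded, $u$ is comparable to $\dist(\cdot,\ccal)$ near $\ccal$ by Hopf-type bounds, and $u$ is bounded below in the interior --- one shows that if $\sigma>0$ then $\sigma u-w\ge 0$ is a supersolution of $\mathcal L$ attaining its infimum $0$, and the strong maximum principle for the positive-kernel part forces $w\equiv\sigma u$, that is $\overline u=(1+\sigma)u$. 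Substituting into the equation yields $(1+\sigma)f(u)=f((1+\sigma)u)$ on $\ocal$, incompatible with strict concavity of $f$ on $(0,1)$ (equivalently, with $s\mapsto f(s)/s$ strictly decreasing there) since $u$ takes a continuum of values in $(0,1)$. Hence $\sigma=0$, $\overline u=u$, and the saddle solution is unique. The hard part is exactly this last step on the unbounded domain $\ocal$: proving the sharp two-sided Hopf bound $u\asymp\dist(\cdot,\ccal)$ near $\ccal$ for all saddle solutions, controlling $w/u$ at infinity, and quantifying how the zero-order term in \eqref{Eq:OperatorOddF} beats the unfavorable sign of $f'(0)$ --- this being the content of Proposition~\ref{Prop:MaximumPrincipleLinearized}.
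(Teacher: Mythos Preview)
Your existence discussion and the bound $|u|<1$ are essentially in line with the paper. The uniqueness argument, however, differs from the paper's and has a real gap.

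The paper's uniqueness proof is shorter and uses no maximal solution. Given any two saddle solutions $u_1,u_2$, set $v=u_1-u_2$; concavity of $f$ in $(0,1)$ gives $L_K v-f'(u_2)v\le 0$ in $\ocal$, and then Proposition~\ref{Prop:MaximumPrincipleLinearized} is applied directly (with $\Omega=\ocal$, $c\equiv 0$) to conclude $v\le 0$; swapping $u_1$ and $u_2$ gives equality. The hypothesis you never invoke is $\limsup_{|x|\to\infty}v(x)\le 0$, and this is exactly what the \emph{asymptotic behavior}, Theorem~\ref{Th:AsymptoticBehaviorSaddleSolution}, supplies: both $u_1$ and $u_2$ converge to the same profile $U$ at infinity, so their difference tends to $0$. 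Your outline makes no use of Theorem~\ref{Th:AsymptoticBehaviorSaddleSolution}, and without it the condition at infinity in Proposition~\ref{Prop:MaximumPrincipleLinearized} is simply not verified.

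Your description of the mechanism behind Proposition~\ref{Prop:MaximumPrincipleLinearized} is also not what the paper does, and the version you sketch does not close. You form $\sigma=\sup_{\ocal}w/u$, whose finiteness requires a Hopf-type lower bound $u\gtrsim\dist(\cdot,\ccal)$ valid for \emph{every} saddle solution; no such bound is proved (or even stated) in the paper, and it is not automatic for operators in $\lcal_0$. The paper avoids this entirely: it splits $\ocal$ into the set $\Omega_\varepsilon=\{|x'|>|x''|+\varepsilon\}$, where $u\ge\delta>0$ by the asymptotic behavior (Remark~\ref{Remark:u>delta}, again a consequence of Theorem~\ref{Th:AsymptoticBehaviorSaddleSolution}), and the $\varepsilon$-neighborhood $\ncal_\varepsilon$ of the cone, where a maximum principle in ``narrow'' sets (Proposition~\ref{Prop:MaximumPrincipleNarrowDomainsOdd}) holds because the zero-order coefficient $2\int_\ocal\overline K(x,y^\star)\d y\gtrsim\dist(x,\ccal)^{-2\s}$ from \eqref{Eq:OperatorOddF}--\eqref{Eq:ZeroOrderTerm} dominates any bounded $c$. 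One then shows $v-\tau u\le 0$ for $\tau$ large and slides $\tau$ down; no ratio $w/u$ is ever formed, and no Hopf bound near $\ccal$ is needed.

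In summary, the detour through a maximal solution is unnecessary, and the two items you flag as ``the hard part'' (behavior of $w/u$ at infinity and a Hopf bound near $\ccal$) are precisely what the paper sidesteps by using Theorem~\ref{Th:AsymptoticBehaviorSaddleSolution} together with the narrow-set maximum principle.
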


To establish the uniqueness of the saddle-shaped solution we will need two ingredients: the asymptotic behavior of saddle-shaped solutions and a maximum principle for the linearized operator in $\ocal$. 
Both results will be described below.
The existence of saddle-shaped solutions was already proved in part~I \cite{FelipeSanz-Perela:IntegroDifferentialI} using variational techniques. 
Here, we show that it can also be established using, instead, the monotone iteration procedure. 
Let us remark that, in both methods, having the convexity assumption \eqref{Eq:SqrtConvex} is crucial.

The second main result of this paper is Theorem~\ref{Th:AsymptoticBehaviorSaddleSolution} below, on the asymptotic behavior of a saddle-shaped solution at infinity. 
To state it, let us introduce an important type of solutions in the study of the integro-differential Allen-Cahn equation: the layer solutions.

We say that a solution $v$ to $L_K v = f(v)$ in $\R^n$ is a \emph{layer solution} if $v$ is increasing in one direction, say $e\in \Sph^{n-1}$, and $v(x) \to \pm 1$ as $x\cdot e \to \pm \infty$ (not necessarily uniform). 
When $n=1$, a result of Cozzi and Passalacqua (Theorem~1 in \cite{CozziPassalacqua}) establishes the existence and uniqueness (up to translations) of a layer solution. In addition, this solution is odd with respect to some point. They assume the kernel to be in the ellipticity class $\lcal_0(1,\s,\lambda, \Lambda)$ and the nonlinearity satisfying \eqref{Eq:Hypothesesf}. In the case of the fractional Laplacian this result was proved in \cite{CabreSolaMorales,CabreSireII} using the extension problem.

Given $K$ a translation invariant kernel in $\R^n$, we define a new kernel $K_1$ in $\R$ as
$$
K_1(\tau) := \int_{\R^{n-1}} K\left(\theta,\tau\right) \d \theta = |\tau|^{n-1} \int_{\R^{n-1}} K\left(\tau\sigma,\tau\right) \d \sigma.
$$ 
Then, we denote by $u_0$ the (unique) layer solution in $\R$ associated to $L_{K_1}$ that vanishes at the origin. 
That is, 
\begin{equation}
\label{Eq:LayerSolution}
\beqc{\PDEsystem}
L_{K_1}  u_0 &=& f(u_0) & \textrm{ in }\R\,,\\
\dot{u}_0 &>& 0 & \textrm{ in } \R\,,\\
u_0(x) & = &-u_0(-x)  & \textrm{ in }\R\,,\\
\ds \lim_{x \to \pm \infty} u_0(x) &=& \pm 1. & 
\eeqc
\end{equation}
This solution will play an important role to establish the asymptotic behavior of saddle-shaped solutions. 
Indeed, its importance lies in that the associated function
\begin{equation}
\label{Eq:DefOfU}
U(x):= u_0 \left( \dfrac{|x'| - |x''|}{\sqrt{2}} \right)\,
\end{equation}
will describe the asymptotic behavior of saddle solutions at infinity. Note that $(|x'| - |x''| )/\sqrt{2}$ is the signed distance to the Simons cone (see Lemma~4.2 in \cite{CabreTerraII}). Therefore, the function $U$ consists of ``copies'' of the layer solution $u_0$ centered at each point of the Simons cone and oriented in the normal direction to the cone.

The precise statement on the asymptotic behavior of saddle-shaped solutions at infinity is the following.

\begin{theorem}
	\label{Th:AsymptoticBehaviorSaddleSolution}
	Let $f\in C^2(\R)$ satisfy \eqref{Eq:Hypothesesf}. Let $K$ be a radially symmetric kernel satisfying the convexity assumption \eqref{Eq:SqrtConvex} and such that $L_K\in \lcal_0(2m, \s, \lambda, \Lambda)$. Let $u$ be a saddle-shaped solution to \eqref{Eq:NonlocalAllenCahn} and let $U$ be the function defined by \eqref{Eq:DefOfU}.
	
	Then,
	$$
	\norm{u-U}_{L^\infty(\R^n\setminus B_R)}
	+\norm{\nabla u-\nabla U}_{L^\infty(\R^n\setminus B_R)}
	+\norm{D^2u-D^2U}_{L^\infty(\R^n\setminus B_R)} \to 0
	$$
	as $ R\to +\infty$.
\end{theorem}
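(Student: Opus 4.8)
The plan is to combine the two announced ingredients—the Liouville-type theorem (Theorem~\ref{Th:LiouvilleSemilinearWholeSpace}) and the one-dimensional symmetry result in a half-space (Theorem~\ref{Th:SymmHalfSpace})—via a compactness/blow-down argument along sequences of points going to infinity. Fix a sequence $x_k \in \R^{2m}$ with $|x_k| \to \infty$. There are two regimes to distinguish according to the behavior of $d_k := \dist(x_k, \ccal)$. If $d_k$ stays bounded along a subsequence, then after translating by the nearest point of $\ccal$ and applying a rotation of double revolution, the translated solutions $u_k(x) := u(x + p_k)$ (with $p_k \in \ccal$ the projection) are bounded in $C^{2,\alpha}_{\loc}$ by the regularity theory for $\lcal_0$, hence converge (up to a subsequence) in $C^2_{\loc}$ to a solution $u_\infty$ of $L_K u_\infty = f(u_\infty)$ in all of $\R^{2m}$. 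Because the Simons cone flattens out to a hyperplane under this blow-down (its curvatures decay like $1/|p_k|$), $u_\infty$ inherits: oddness with respect to that limiting hyperplane, positivity on one side, and the normalization $u_\infty = 0$ on the hyperplane. The half-space symmetry result (Theorem~\ref{Th:SymmHalfSpace}, in its statement for odd solutions) then forces $u_\infty$ to be one-dimensional and monotone, i.e.\ $u_\infty(x) = u_0\big((x\cdot e)/1\big)$ after matching normalizations—precisely a translate of $U$ in the blown-down picture. If instead $d_k \to \infty$, translate by $x_k$ itself; now the cone recedes to infinity entirely, the zero-order term in \eqref{Eq:OperatorOddF} disappears in the limit (by \eqref{Eq:ZeroOrderTerm}, its coefficient is $O(d_k^{-2\s}) \to 0$), and the limit $u_\infty$ is a bounded solution of $L_K u_\infty = f(u_\infty)$ in $\R^{2m}$ with a sign; the Liouville theorem (Theorem~\ref{Th:LiouvilleSemilinearWholeSpace}) then gives $u_\infty \equiv 1$ (or $\equiv -1$), which matches $U \equiv \pm 1$ away from the cone.

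Having identified all possible limits, I would run the standard contradiction argument: if the conclusion failed, there would be $\varepsilon > 0$, $R_k \to \infty$, and points $x_k$ with $|x_k| \geq R_k$ such that $|u(x_k) - U(x_k)| + |\nabla u(x_k) - \nabla U(x_k)| + |D^2 u(x_k) - D^2 U(x_k)| \geq \varepsilon$. Passing to the blow-down limits as above—and doing the analogous blow-down for $U$ itself, which converges to the exact same one-dimensional profile $u_0$ (respectively to $\pm 1$) because $U$ is literally built from $u_0$ and the signed distance to $\ccal$—one gets $u_\infty \equiv U_\infty$, contradicting the persistence of the $\varepsilon$-gap under $C^2_{\loc}$ convergence. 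The convergence of the gradients and Hessians is automatic once we have $C^2_{\loc}$ convergence of both $u_k$ and the blown-down $U_k$, using interior Schauder-type estimates for $\lcal_0$ operators (as in \cite{RosOton-Survey,SerraC2s+alphaRegularity}) to upgrade $C^0_{\loc}$ convergence to $C^2_{\loc}$; this is where the hypothesis $f \in C^2$ is used, to get uniform $C^{2,\alpha}$ bounds. One technical point worth spelling out: the translated kernels $\overline{K}(\cdot + p_k, \cdot + p_k)$ are not translation invariant, but as $|p_k| \to \infty$ they converge locally uniformly to the translation-invariant kernel $K(x-y)$ (when $d_k \to \infty$) or to the ``half-space'' kernel with the reflected term (when $d_k$ stays bounded), so the limiting equation is of the advertised form and the passage to the limit in the nonlocal operator is justified by dominated convergence using the uniform $L^1_\s$ and Hölder bounds.

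The main obstacle I expect is making the passage to the limit in the nonlocal operator fully rigorous and uniform, since—unlike in the local case—the operator ``sees'' the global behavior of $u$ and of the translated kernels. Concretely, one must show $L_K u_k(x) \to L_{K_\infty} u_\infty(x)$ locally uniformly, which requires: (i) a uniform tail estimate, i.e.\ uniform control of $\int_{\R^{2m}\setminus B_\rho} |u_k(x) - u_k(y)| \overline{K}(x+p_k, y+p_k)\,\d y$ that is small uniformly in $k$ for $\rho$ large—this follows from $\|u_k\|_{L^\infty} \leq 1$ and the ellipticity bound \eqref{Eq:Ellipticity}; (ii) convergence of the kernels $\overline{K}(\cdot + p_k, \cdot + p_k) \to K_\infty$ in $L^1_{\loc}$ of the relevant region, which uses the explicit averaged form of $\overline{K}$ and the fact that orbits of $O(m)^2$ through a point at distance $\to\infty$ from the cone ``straighten out''; and (iii) handling the zero-order term in \eqref{Eq:OperatorOddF} via the sharp two-sided bound \eqref{Eq:ZeroOrderTerm}. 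Once these are in place, the rest of the argument is the routine compactness/classification/contradiction scheme, modeled on the fractional case treated in \cite{Felipe-Sanz-Perela:SaddleFractional} but now carried out intrinsically without the extension problem.
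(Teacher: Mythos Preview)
Your overall architecture is exactly the paper's: contradiction, split according to whether $d_k = \dist(x_k,\ccal)$ is bounded or not, translate, pass to a limit, and classify the limit via Theorem~\ref{Th:LiouvilleSemilinearWholeSpace} (when $d_k\to\infty$) or Theorem~\ref{Th:SymmHalfSpace} (when $d_k$ stays bounded). But there is one genuine gap. In both cases the limit $u_\infty$ is only \emph{nonnegative} in the relevant region (limits of strict inequalities are not strict), so Theorem~\ref{Th:LiouvilleSemilinearWholeSpace} leaves open the possibility $u_\infty\equiv 0$, and Theorem~\ref{Th:SymmHalfSpace} cannot even be applied since it requires $v>0$ in the half-space. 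You never explain how to exclude $u_\infty\equiv 0$. The paper does this with a stability argument: since $0<u<1$ in $\ocal$, the translated solutions $w_k$ are stable in $B_k$ (cf.\ Subsection~\ref{Subsec:RemarkStability} and Remark~\ref{Remark:uStableinO}), hence $u_\infty$ is stable in $\R^{2m}$; but $u_\infty\equiv 0$ would make the linearized operator $L_K - f'(0)$ with $f'(0)>0$, which is negative on large balls because $\lambda_1(B_R,L_K)\lesssim R^{-2\s}\to 0$. This rules out the trivial limit in both cases; in the half-space case one then invokes the strong maximum principle to upgrade $u_\infty\geq 0$ to $u_\infty>0$ before applying Theorem~\ref{Th:SymmHalfSpace}.

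Separately, your long discussion of ``convergence of the translated kernels $\overline{K}(\cdot+p_k,\cdot+p_k)$'' and of the zero-order term in \eqref{Eq:OperatorOddF} is unnecessary and somewhat misleading. The operator $L_K$ acts through the \emph{translation-invariant} kernel $K(x-y)$; the representation \eqref{Eq:OperatorOddF} with $\overline{K}$ is only a rewriting valid for doubly radial odd functions and plays no role here. Since $u$ solves $L_K u = f(u)$ in all of $\R^{2m}$, the translate $w_k(x)=u(x+x_k^0)$ satisfies $L_K w_k = f(w_k)$ in all of $\R^{2m}$ exactly, with the same kernel $K$, and the passage to the limit is just Arzelà--Ascoli plus the interior estimates of Proposition~\ref{Prop:InteriorRegularity}. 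No kernel convergence, no vanishing zero-order term, and no ``half-space kernel'' are needed.
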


Let us now describe some of the main ingredients that are used to prove Theorems~\ref{Th:ExistenceUniqueness} and \ref{Th:AsymptoticBehaviorSaddleSolution}. Concerning the uniqueness of the saddle-shaped solution, besides the asymptotic behavior described in Theorem~\ref{Th:AsymptoticBehaviorSaddleSolution} we also need the following maximum principle in $\ocal$ for the linearized operator $L_K - f'(u)$.

\begin{proposition}
	\label{Prop:MaximumPrincipleLinearized}
	Let $\Omega \subset \ocal$ be an open set (not necessarily bounded) and let $K$ be a radially symmetric kernel satisfying the convexity assumption \eqref{Eq:SqrtConvex} and such that $L_K\in \lcal_0(2m, \s, \lambda, \Lambda)$. Let $u$ be a saddle-shaped solution to \eqref{Eq:NonlocalAllenCahn}, and let $v\in  L^1_\s(\R^{2m})$ be a doubly radial function which is $C^\alpha$ in $\Omega$ and continuous up to the boundary, for some $\alpha > 2\s$. Assume that $v$ satisfies
	$$
	\beqc{\PDEsystem}
	L_K v - f'(u)v - c(x)v &\leq & 0 &\textrm{ in } \Omega\,,\\
	v &\leq & 0 &\textrm{ in } \ocal \setminus \Omega\,,\\
	- v(x^\star) & = & v(x) &\textrm{ in } \R^{2m},\\
	\ds \limsup_{x\in \Omega, \ |x|\to \infty} v(x) &\leq & 0\,,
	\eeqc
	$$
	with $c\leq 0$ in $\Omega$.
	
	Then, $v \leq 0$ in $\Omega$.
\end{proposition}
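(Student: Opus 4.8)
The plan is to run a maximum-principle argument adapted to the expression \eqref{Eq:OperatorOddF} for $L_K$ on doubly radial odd functions, exploiting the positivity of the effective kernel \eqref{Eq:KernelInequality} (which follows from the convexity hypothesis \eqref{Eq:SqrtConvex}) and, crucially, the fact that the zero-order coefficient in \eqref{Eq:OperatorOddF} blows up like $\dist(x,\ccal)^{-2\s}$ near the cone. First I would suppose, for contradiction, that $\sup_\Omega v = M > 0$. Since $v \le 0$ on $\ocal\setminus\Omega$ and $\limsup_{|x|\to\infty} v \le 0$ along $\Omega$, the positive supremum must be attained (or approached through a sequence that stays at bounded distance from $\partial\Omega\cap\ocal$ and away from $\infty$). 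The delicate point is that $\Omega$ is a subset of $\ocal$, possibly touching the cone $\ccal$, where the zero-order term in \eqref{Eq:OperatorOddF} is singular; I would argue that a maximizing sequence $x_k$ cannot approach $\ccal$, because there $2v(x_k)\int_{\ocal}\overline{K}(x_k,y^\star)\d y \sim M\dist(x_k,\ccal)^{-2\s} \to +\infty$ while the integro-differential part of $L_K v(x_k)$ stays bounded below (it is $\ge$ a bounded quantity since $v$ is bounded and the kernel $\overline K(x,y)-\overline K(x,y^\star)$ is integrable against $v(x_k)-v(y)$, using $v\in L^1_\s$), contradicting $L_K v \le f'(u)v + c(x)v \le (\,\|f'\|_\infty\,) M$ there. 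Hence the sup is attained at an interior point $x_0\in\Omega$ with $\dist(x_0,\ccal)>0$.

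At such an interior maximum $x_0$, evaluate \eqref{Eq:OperatorOddF}: the integro-differential part $\int_{\ocal}(v(x_0)-v(y))(\overline K(x_0,y)-\overline K(x_0,y^\star))\d y \ge 0$ because $v(x_0)\ge v(y)$ for all $y\in\ocal$ (here the oddness of $v$ is used to reduce the integral over $\R^{2m}$ to one over $\ocal$, exactly as in the derivation of \eqref{Eq:OperatorOddF}) and the effective kernel is positive by \eqref{Eq:KernelInequality}; the zero-order part is $2 v(x_0)\int_{\ocal}\overline K(x_0,y^\star)\d y > 0$ strictly, by \eqref{Eq:ZeroOrderTerm} and $v(x_0)=M>0$. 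Therefore $L_K v(x_0) > 0$. On the other hand the differential inequality gives $L_K v(x_0) \le f'(u(x_0))v(x_0) + c(x_0)v(x_0) = (f'(u(x_0)) + c(x_0))\,M$. Since $c\le 0$, I need $f'(u(x_0)) \le 0$ to close this immediately; if $f'(u(x_0))>0$ the crude bound is not enough and I must instead compare the size of the zero-order term, which is $\ge C^{-1}\dist(x_0,\ccal)^{-2\s}M$, against $f'(u(x_0))M$. This is where the sign structure of $f$ and of $u$ enters: on $\ocal$ a saddle solution satisfies $0<u<1$, and since $f$ is strictly concave with $f'(0)>0$, $f'(u)$ can indeed be positive, so the contradiction cannot come from a single point estimate alone.

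The way around this — and the step I expect to be the main obstacle — is a translation/normalization trick: since $f'(u)$ is bounded, replace $v$ by $w := v/\varphi$ for a suitable positive doubly radial odd supersolution $\varphi$ of $L_K\varphi - f'(u)\varphi \ge \delta\varphi$ in $\ocal$, or equivalently add a large constant to $c$ and absorb it, reducing to the case where the zero-order coefficient dominates; alternatively, one exploits that $u$ itself (being a saddle solution, positive in $\ocal$) is a positive solution of $L_K u = f(u)$, and since $f$ is concave with $f(0)=0$ one has $f(u) \le f'(0)u$, hence... more usefully $f(u)/u \ge f'(u)$ by concavity (the chord lies above the tangent, so $f(u)/u = (f(u)-f(0))/(u-0) \ge f'(u)$ for $u\in(0,1)$), which means $L_K u - f'(u)u = f(u) - f'(u)u \ge 0$, i.e.\ $u$ is a positive supersolution of the linearized operator $L_K - f'(u)$ in $\ocal$. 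Then I set $w := v/u$, which is doubly radial and odd, derive the equation satisfied by $w$ (an integro-differential equation with a drift-type nonlocal term and a zero-order coefficient that is now $\le 0$ thanks to the supersolution property of $u$ and the sign of $c$), check that $\limsup w \le 0$ at infinity and $w\le 0$ outside $\Omega$, and apply the maximum principle in the form just sketched — now genuinely with a nonpositive zero-order term, so that an interior positive maximum of $w$ gives $L_{\text{eff}} w(x_0) \le 0$ while the kernel positivity forces $L_{\text{eff}} w(x_0) > 0$, a contradiction. The remaining care is the behavior of $w = v/u$ near $\ccal$, where $u\to 0$: one must verify that $w$ does not blow up there, which uses that $v$ also vanishes on $\ccal$ (by oddness $v = -v^\star$ forces $v=0$ on the fixed set of $(\cdot)^\star$, and continuity plus the Hopf-type behavior of both $u$ and $v$ controls the quotient), or else one simply restricts attention to $\Omega' = \Omega \cap \{\dist(x,\ccal)>\varepsilon\}$ and lets $\varepsilon\to 0$, handling the thin strip near $\ccal$ by the singular-zero-order-term argument of the previous paragraph. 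Putting these together yields $w\le 0$, hence $v\le 0$, in $\Omega$.
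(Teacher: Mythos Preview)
Your strategy contains the two correct ingredients, and they are exactly the ones the paper uses: (a) the saddle solution $u$ is a strictly positive supersolution of the linearized operator on $\ocal$, since concavity of $f$ gives $L_K u - f'(u)u - cu \geq f(u)-f'(u)u>0$; and (b) near $\ccal$ the zero-order term in \eqref{Eq:OperatorOddF} blows up like $\dist(\cdot,\ccal)^{-2\s}$, which is the mechanism behind a ``narrow'' maximum principle in a strip along the cone.

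The gap is in how you combine them. The quotient $w=v/u$ is the natural local-PDE reflex, but here it is not under control: near $\ccal$ you only know $v\in C^\alpha$ and $v=0$ on $\ccal$ by oddness, with no rate. There is no Hopf-type lower bound available for a mere subsolution $v$, so $v$ could vanish like $\dist(\cdot,\ccal)^{\beta}$ with $\beta<1$ while $u$ (smooth across $\ccal$) vanishes linearly, and then $w\to+\infty$ at $\ccal$. Your fallback of restricting to $\Omega'=\Omega\cap\{\dist>\varepsilon\}$ does not close either: on the new interface $\{\dist=\varepsilon\}$ you have no sign information on $w$, so a maximum of $w$ can sit there and neither the interior argument nor the strip argument applies. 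The strip argument you sketch is a statement about maxima of $v$, not of $w$, and the two locations need not coincide.

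The paper resolves this by replacing the quotient with a \emph{subtraction and sliding}: set $w_\tau:=v-\tau u$, which is bounded for every $\tau$ and satisfies $\mathscr L w_\tau\le 0$ in $\Omega$ by (a). Using the asymptotic behavior of $u$ (Theorem~\ref{Th:AsymptoticBehaviorSaddleSolution}) one has $u\ge\delta>0$ on $\overline{\Omega}_\varepsilon:=\overline{\Omega\cap\{|x'|>|x''|+\varepsilon\}}$, so $w_\tau<0$ there for $\tau$ large; combined with $v\le0$ on $\ocal\setminus\Omega$, this gives $w_\tau\le0$ on all of $\ocal\setminus\ncal_\varepsilon$. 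Now your ``singular zero-order'' observation becomes a genuine narrow maximum principle (Proposition~\ref{Prop:MaximumPrincipleNarrowDomainsOdd}) applied to $w_\tau$ in $\ncal_\varepsilon$, yielding $w_\tau\le0$ in all of $\Omega$. One then slides $\tau$ down to $\tau_0:=\inf\{\tau>0: v-\tau u\le0\text{ in }\Omega\}$; if $v$ were positive somewhere then $\tau_0>0$, the strong maximum principle for odd functions forces $v-\tau_0 u<0$ in $\Omega$, and the same two-region argument with $\tau_0-\eta$ contradicts minimality of $\tau_0$.

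In short: keep your ingredients (a) and (b), but couple them with $v-\tau u$ and a sliding in $\tau$ rather than with $v/u$. This sidesteps entirely the boundary behavior of $v$ at $\ccal$, which your plan cannot control.
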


To establish it, the key tool is to use a maximum principle in ``narrow'' sets, also proved in Section~\ref{Sec:MaximumPrinciple}. Our proof of this result is much simpler than that of the analogue maximum principle for the classical Laplacian. This is an example of how the nonlocality of the operator can make some arguments easier and less technical (informally speaking, the reason would be that $L_K$ ``sees more'', or ``further'', than the Laplacian). It is also interesting to notice that the proof of Proposition~\ref{Prop:MaximumPrincipleLinearized} is by far simpler than the one using the extension problem in the case of the fractional Laplacian (Proposition~1.4 in \cite{Felipe-Sanz-Perela:SaddleFractional}). In the proof, the positivity condition \eqref{Eq:KernelInequality} ---guaranteed by the convexity of the kernel--- is crucial, together with the bounds \eqref{Eq:ZeroOrderTerm}.

Regarding the proof of Theorem~\ref{Th:AsymptoticBehaviorSaddleSolution}, to establish the asymptotic behavior of saddle-shaped solutions we use a compactness argument as in \cite{CabreTerraII, Cinti-Saddle, Cinti-Saddle2}, together with two important results presented next and established in Section~\ref{Sec:SymmetryResults}. 
The first one, Theorem~\ref{Th:LiouvilleSemilinearWholeSpace}, is a Liouville type principle for nonnegative solutions to a semilinear equation in the whole space. This result, in contrast with the previous ones, does not require the kernel $K$ to be radially symmetric, but only to satisfy \eqref{Eq:ParityK} and \eqref{Eq:Ellipticity}.

\begin{theorem}
	\label{Th:LiouvilleSemilinearWholeSpace}
	Let $L_K \in \lcal_0(n,\s,\lambda, \Lambda)$ and let $v$ be a bounded solution to
	\begin{equation}
	\label{Eq:PositiveWholeSpace}
	\beqc{\PDEsystem}
	L_K v &=& f(v) & \textrm{ in }\R^n\,,\\
	v &\geq& 0 & \textrm{ in } \R^n\,,
	\eeqc
	\end{equation}
	with a nonlinearity $f\in C^1$ satisfying
	\begin{itemize}
		\item $f(0) = f(1) = 0$,
		\item $f'(0)>0$,
		\item $f>0$ in $(0,1)$, and $f<0$ in $(1,+\infty)$.
	\end{itemize}
	Then, $v\equiv 0$ or $v \equiv 1$.
\end{theorem}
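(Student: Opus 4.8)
The plan is to adapt the approach of Berestycki, Hamel, and Nadirashvili \cite{BerestyckiHamelNadi} to the integro-differential setting. The analytic inputs are interior regularity estimates for $\lcal_0$ operators (which also yield compactness for families of translates of $v$), a Harnack inequality for $L_K$ applied to $v$ (legitimate after rewriting $f(v)=c(x)v$ with $c$ bounded, once $0\le v\le 1$ is known), and a parabolic maximum principle for the evolution equation $\partial_t w+L_Kw=f(w)$, together with the associated nonlocal Cauchy and Dirichlet problems. We must show that a bounded solution of \eqref{Eq:PositiveWholeSpace} is $\equiv 0$ or $\equiv 1$. If $v(x_0)=0$ for some $x_0$, then evaluating the equation at $x_0$ gives $0=f(0)=L_Kv(x_0)=-\int_{\R^n}v(y)K(x_0-y)\d y\le 0$, and since $K>0$ a.e.\ this forces $v\equiv 0$; hence we may assume $v>0$ in $\R^n$ and prove $v\equiv 1$.

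\emph{Compactness of translates and the bound $v\le 1$.} For any sequence $(x_k)\subset\R^n$, the translates $v_k:=v(\cdot+x_k)$ solve the same translation-invariant equation and are uniformly bounded; by the interior regularity estimates and Arzel\`{a}--Ascoli, along a subsequence $v_k\to\bar v$ in $C^{2\s+\alpha}_{\loc}$, and one may pass to the limit in the nonlocal equation by splitting $L_Kv_k(x)$ into a near part (controlled by the local convergence) and a tail (controlled by $\norm{v}_{L^\infty}$ and the integrability of $K$ at infinity); thus $\bar v$ is again a bounded solution with $0\le\bar v\le\sup_{\R^n} v$. Choosing $x_k$ with $v(x_k)\to M:=\sup_{\R^n}v$, the limit $\bar v$ has an interior maximum equal to $M$ at the origin, so $0\le L_K\bar v(0)=f(M)$; since $f<0$ in $(1,+\infty)$, this gives $M\le 1$.

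\emph{A sweeping argument and the bound $v\ge 1$.} Fix $R$ large enough that the principal Dirichlet eigenvalue $\lambda_1(B_R)$ of $L_K$ in $B_R$ satisfies $\lambda_1(B_R)<f'(0)$; this is possible because $\lambda_1(B_R)\to 0$ as $R\to\infty$, by the ellipticity bounds \eqref{Eq:Ellipticity}. Let $\phi_R>0$ be the associated eigenfunction, extended by $0$ outside $B_R$ and normalised so that $\norm{\phi_R}_{L^\infty}=1$. Since $f\in C^1$ and $f(0)=0$, we have $f(s)/s\to f'(0)>\lambda_1(B_R)$ as $s\to0^+$, so for small $\varepsilon>0$ the function $\varepsilon\phi_R$ is a bounded stationary subsolution of $L_Kw=f(w)$ on $\R^n$ (inside $B_R$ one has $L_K(\varepsilon\phi_R)=\lambda_1(B_R)\varepsilon\phi_R\le f(\varepsilon\phi_R)$, and outside $L_K(\varepsilon\phi_R)<0=f(0)$). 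Given $x_0\in\R^n$: since any bounded positive solution $w$ is continuous and positive, after shrinking $\varepsilon$ we have $\varepsilon\phi_R(\cdot-x_0)\le w$ in $\R^n$; evolving $\partial_t\tilde w+L_K\tilde w=f(\tilde w)$ from the subsolution $\varepsilon\phi_R(\cdot-x_0)$, the parabolic maximum principle makes $\tilde w(\cdot,t)$ nondecreasing in $t$ and bounded above by the stationary supersolutions $1$ and $w$; by the regularity estimates it converges as $t\to\infty$ to a bounded stationary solution lying between $\varepsilon\phi_R(\cdot-x_0)$ and $\min(1,w)$, which moreover dominates $\theta_R(\cdot-x_0)$, where $\theta_R$ is the maximal positive solution of the Dirichlet problem $L_Kw=f(w)$ in $B_R$, $w\equiv 0$ in $\R^n\setminus B_R$ (obtained itself as the increasing limit, as $t\to\infty$, of the analogous flow with exterior datum pinned at $0$). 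Hence every bounded positive solution $w$ satisfies $w\ge\theta_R(\cdot-x_0)$ for all $x_0\in\R^n$ and all large $R$. Now $\theta_R$ extended by $0$ is a subsolution on larger balls, so $\theta_R$ is nondecreasing in $R$ and $\theta_R\uparrow\theta_\infty$ pointwise, with $\theta_\infty$ again a bounded positive solution and $\theta_\infty\le 1$. Applying $w\ge\theta_R(\cdot-x_0)$ with $w=\theta_\infty$ and letting $R\to\infty$ gives $\theta_\infty(x)\ge\theta_\infty(x-x_0)$ for all $x,x_0$, so $\theta_\infty$ is constant, and being a positive solution bounded by $1$ it equals $1$. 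Applying $w\ge\theta_R(\cdot-x_0)$ with $w=v$ and letting $R\to\infty$ gives $v\ge\theta_\infty(\cdot-x_0)\equiv 1$; together with $v\le 1$, this yields $v\equiv 1$.

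The main obstacle is to develop and exploit the auxiliary nonlocal problems used in the sweeping step: the Cauchy problem for $\partial_t w+L_Kw=f(w)$ with its comparison principle and its convergence to a stationary solution as $t\to\infty$, and the nonlocal logistic Dirichlet problem in $B_R$ (existence of a maximal positive solution, obtained as a monotone limit of the flow, and its monotonicity in $R$). The remaining points --- the compactness of the translates $v_k$ and the passage to the limit in $L_Kv_k$, the decay $\lambda_1(B_R)\to 0$, and the Harnack-type interior positivity used for $v$ and $\phi_R$ --- are comparatively routine.
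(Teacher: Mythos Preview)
Your route diverges from the paper's. Both proofs assume $v>0$ and aim at $v\equiv 1$, and both rest on a parabolic comparison principle for $\partial_t w+L_Kw=f(w)$ in $\R^n\times(0,\infty)$, which the paper establishes separately. But the paper organises the work differently. Its Step~1 shows $m:=\inf_{\R^n}v>0$ by contradiction, combining the semilinear Harnack inequality with the decay of the first Dirichlet eigenvalue: if $v(x_k)\to 0$ then Harnack forces $v$ uniformly small on $B_R(x_k)$, so $L_Kv\ge \tfrac12 f'(0)\,v$ there; multiplying by $\xi^2/v$ for a compactly supported $\xi$ with Rayleigh quotient below $\tfrac12 f'(0)$ and using the algebraic inequality \eqref{Eq:IdentityStability} yields a contradiction. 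Once $m>0$, Step~2 is immediate: the spatially constant $\xi(t)$ solving $\dot\xi=f(\xi)$, $\xi(0)\in(0,m)$, is an explicit solution of the parabolic equation with $\xi(t)\uparrow 1$, and comparison gives $v\ge\xi(t)\to 1$; similarly $v\le 1$ by comparing from above. No existence theory for nonlocal parabolic Cauchy or Dirichlet problems is needed --- only comparison between two already-given solutions. (Your bound $v\le 1$ via limits of translates is correct, and arguably cleaner than the paper's ODE-from-above.)

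Your sweeping argument for $v\ge 1$, however, has a gap. You claim every bounded positive solution $w$ satisfies $w\ge\theta_R(\cdot-x_0)$ for a \emph{single} $\theta_R$ independent of $x_0$ and $w$. But the Dirichlet flow you run starts from $\varepsilon\phi_R(\cdot-x_0)$ with $\varepsilon$ chosen so that $\varepsilon\phi_R(\cdot-x_0)\le w$, and this $\varepsilon$ depends on $\min_{\overline{B_R(x_0)}}w$, hence on $x_0$. The increasing-flow limit is therefore some positive Dirichlet solution $\theta_R^{\,\varepsilon(x_0,w)}$, and since the hypotheses on $f$ do not include concavity, the positive Dirichlet solution in $B_R$ need not be unique; there is no reason these limits coincide (and calling $\theta_R$ ``maximal'' does not help --- the increasing limit from a subsolution yields a minimal, not a maximal, solution). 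The translation step ``$\theta_\infty(x)\ge\theta_\infty(x-x_0)$ for all $x_0$'' then fails to close. What is missing is precisely a uniform lower bound on $w$ over $\R^n$, allowing a single $\varepsilon$ for all $x_0$ --- i.e.\ $\inf_{\R^n}w>0$, which is exactly the paper's Step~1. Once you have it, the ODE comparison of Step~2 is both simpler and avoids the existence and long-time convergence of the nonlocal parabolic Dirichlet and Cauchy flows on which your argument otherwise relies.
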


Similar classification results have been proved for the fractional Laplacian in \cite{ChenLiZhang,LiZhang} (either using the extension problem or not) with the method of moving spheres, which uses crucially the scale invariance of the operator $\fraclaplacian$. To the best of our knowledge, there is no similar result available in the literature for general kernels in the ellipticity class $\lcal_0$ (which are not necessarily scale invariant). Thus, we present here a proof based on the techniques introduced by Berestycki, Hamel, and Nadirashvili \cite{BerestyckiHamelNadi} for the local equation with the classical Laplacian. It relies on a maximum principle for a nonlinear heat equation, the translation invariance of the operator, a Harnack inequality, and a stability argument.

The second ingredient needed to prove the asymptotic behavior of saddle-shaped solutions is a symmetry result for equations in a half-space, stated next. Here and in the rest of the paper we use the notation $\R^n_+= \{(x_H,x_n)\in \R^{n-1}\times \R \ : \ x_n > 0\}$.  

\begin{theorem}
	\label{Th:SymmHalfSpace}
	Let $L_K\in \lcal_0(n,\s,\lambda, \Lambda)$ and let $v$ be a bounded solution to one of the following two problems: either to	
	\begin{equation}
	\reqnomode
	\tag{P1}
	\label{Eq:P1}
	\beqc{\PDEsystem}
	L_K v &=& f(v)   &\textrm{ in } \,\R^n_+,\\
	v &>& 0   &\textrm{ in } \,\R^n_+,\\
	v(x_H,x_n) &=& -v(x_H,-x_n)   &\textrm{ in } \,\R^n,
	\eeqc
	\end{equation}
	or to
	\begin{equation}
	\reqnomode
	\tag{P2}
	\label{Eq:P2}
	\beqc{\PDEsystem}
	L_K v &=& f(v)   &\textrm{ in } \,\R^n_+,\\
	v &>& 0   &\textrm{ in } \,\R^n_+,\\
	v &=& 0   &\textrm{ in } \,\R^n \setminus \R^n_+.
	\eeqc
	\end{equation}
	
	\reqnomode
	
	Assume that, in $\R^n_+$, the kernel $K$ of the operator $L_K$ is decreasing in the direction of $x_n$, i.e., it satisfies
	$$
	K(x_H-y_H,x_n-y_n) \geq K(x_H-y_H,x_n+y_n) \,\,\,\,\text{for all } \,\, x,y\in \R^n_+.
	$$ 
	Suppose that $f\in C^1$ and
	\begin{itemize}
		\item $f(0) = f(1) = 0$,
		\item $f'(0)>0$, and $f'(\tau)\leq 0$ for all $\tau\in[1-\delta,1]$ for some $\delta>0$,
		\item $f>0$ in $(0,1)$, and
		\item $f$ is odd in the case of \eqref{Eq:P1}.
	\end{itemize}
	Then, $v$ depends only on $x_n$ and it is increasing in this direction.
\end{theorem}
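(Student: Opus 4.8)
The plan is to prove the one-dimensional symmetry of $v$ via the classical machinery of sliding and moving planes, adapted to the integro-differential setting, and then to upgrade monotonicity to genuine $x_n$-dependence. I would treat problems \eqref{Eq:P1} and \eqref{Eq:P2} in parallel, since the odd (resp. vanishing) extension of $v$ to all of $\R^n$ plays the same role in both: for \eqref{Eq:P1} the odd reflection $\tilde v(x_H,x_n)=-v(x_H,-x_n)$ is automatically a solution in $\{x_n<0\}$ because $f$ is odd, while for \eqref{Eq:P2} the function simply vanishes there. The hypothesis that $K$ is decreasing in the $x_n$-direction in $\R^n_+$ is precisely what makes the reflected kernel across a horizontal plane dominate the original one, which is the structural input needed for the moving-plane comparison.

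\emph{Step 1: set-up of the moving plane.} For $\mu>0$ let $T_\mu=\{x_n=\mu\}$, let $x^\mu=(x_H,2\mu-x_n)$ be the reflection across $T_\mu$, and set $v_\mu(x):=v(x^\mu)$ on $\Sigma_\mu:=\{0<x_n<\mu\}$. Write $w_\mu:=v_\mu-v$. Using the kernel monotonicity $K(x_H-y_H,x_n-y_n)\ge K(x_H-y_H,x_n+y_n)$ together with the parity \eqref{Eq:ParityK}, one checks that $w_\mu$ is a supersolution of a linear equation of the form $L_K w_\mu - c_\mu(x) w_\mu \le 0$ in $\Sigma_\mu$, where $c_\mu(x)=\int_0^1 f'(tv_\mu(x)+(1-t)v(x))\,dt$ is bounded, plus an extra nonnegative contribution coming from the difference of kernels (this is where "$K$ decreasing" is used). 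On $\partial\Sigma_\mu$ we have $w_\mu\ge 0$: on $\{x_n=0\}$ because $v>0=v(x^\mu)$ for \eqref{Eq:P2}, or $w_\mu=2v>0$ for \eqref{Eq:P1}; on $\{x_n=\mu\}$ trivially $w_\mu=0$; and $w_\mu\ge 0$ outside $\Sigma_\mu$ in $\R^n$ by the same boundary-sign considerations.

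\emph{Step 2: start and continuity of the sliding.} Since $v$ is bounded, $v<1$ by a barrier/sweeping argument using $f'(\tau)\le 0$ near $\tau=1$ together with Theorem~\ref{Th:LiouvilleSemilinearWholeSpace} applied to translates (a bounded nonnegative solution can only approach $1$, never exceed it, and in fact $v<1$ strictly by the strong maximum principle). Near $\{x_n=0\}$ the solution is small, so $f'(v)<0$ there, and a narrow-set maximum principle (as in Section~\ref{Sec:MaximumPrinciple}, or the classical one for thin strips) gives $w_\mu\ge 0$ in $\Sigma_\mu$ for all small $\mu>0$. Define $\mu^*:=\sup\{\mu>0: w_{\mu'}\ge 0 \text{ in }\Sigma_{\mu'}\ \forall\mu'\le\mu\}$. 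If $\mu^*=+\infty$ then $v_\mu\ge v$ for every $\mu$; letting $\mu\to\infty$ through the reflection gives $v(x_H,s)\ge v(x_H,t)$ whenever $0<t<s$ — monotonicity in $x_n$ — and one runs the planes from above symmetrically to conclude $v$ is nondecreasing. If $\mu^*<+\infty$ we derive a contradiction: by continuity $w_{\mu^*}\ge 0$, by the strong maximum principle $w_{\mu^*}>0$ in $\Sigma_{\mu^*}$ (it is not identically zero since $v>0$ on $\{x_n=0\}$ while $v_{\mu^*}$ need not vanish there), and then a standard argument — split $\Sigma_{\mu^*+\varepsilon}$ into a compact piece where $w_{\mu^*}\ge\eta>0$ and a thin remaining strip where the narrow-set maximum principle applies — shows $w_{\mu^*+\varepsilon}\ge 0$ for small $\varepsilon$, contradicting maximality. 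Hence $v$ is monotone nondecreasing in $x_n$.

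\emph{Step 3: from monotonicity to one-dimensionality.} Having $\partial_{x_n}v\ge 0$, apply the sliding method in the horizontal directions: for $\tau\in\R^{n-1}$ compare $v(x+(\tau,0))$ with $v(x)$ and slide; translation invariance of $L_K$ plus the strong maximum principle force equality, so $v$ depends only on $x_n$. Then $v=v(x_n)=:g(x_n)$ solves the one-dimensional equation $L_{K_1}g=f(g)$ (integrating the kernel over the transversal variables as in the definition of $K_1$), $g$ is nondecreasing, $g>0$ on $(0,\infty)$, and $g(0)=0$ (odd case) or $g\le 0 \equiv$ on $(-\infty,0]$; since $g$ is bounded and monotone, $g(+\infty)=:\ell\in(0,1]$ exists, and passing to the limit in the equation forces $f(\ell)=0$, hence $\ell=1$. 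Strict monotonicity $\dot g>0$ follows from the strong maximum principle applied to $\dot g$, which satisfies the linearized equation. This identifies $v$ with (a translate of, or half of) the layer solution and completes the proof.

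\emph{Main obstacle.} The delicate point is Step 2 at $\mu=\mu^*<\infty$: because the domain $\Sigma_\mu$ is unbounded in the horizontal directions, the usual compact-plus-narrow decomposition must be supplemented by control of $w_\mu$ at horizontal infinity. This is where a Harnack inequality (uniform in $x_H$, available since $L_K\in\lcal_0$) and the condition $\limsup w_\mu\le 0$ at infinity — itself a consequence of $v<1$ and the asymptotics forced by Theorem~\ref{Th:LiouvilleSemilinearWholeSpace} — are needed to close the argument; handling this uniformity correctly in the nonlocal setting, where the equation at a point feels the whole space, is the crux.
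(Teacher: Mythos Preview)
Your Steps~1--2 (moving planes to obtain $\partial_{x_n}v>0$) follow the same route as the paper, which proves exactly this as Proposition~\ref{Prop:MonotonyHalfSpace} via the narrow-band maximum principle for odd functions (Proposition~\ref{Prop:MaxPrpNarrowOdd}). Two small corrections: first, your claim that ``near $\{x_n=0\}$ the solution is small, so $f'(v)<0$'' is false --- $f'(0)>0$ by hypothesis, so $f'(v)>0$ there; the narrow maximum principle works anyway because it only needs $c$ bounded below, not $c\le 0$. Second, the ``main obstacle'' you flag (lack of control of $w_\mu$ at horizontal infinity) is in fact bypassed in the paper by a truncation trick of Felmer--Wang: one replaces $w_\mu$ by its negative part and shows the latter is a subsolution of a clean Dirichlet problem to which the narrow/ABP estimate applies, with no assumption at infinity needed (see the proof of Proposition~\ref{Prop:MaxPrpNarrowOdd} and the remark following it).

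The genuine gap is your Step~3. Sliding $v(\cdot+(\tau,0))$ against $v$ in a purely horizontal direction does not start: both functions solve the same equation in $\R^n_+$, share the same data on $\{x_n\le 0\}$, and there is no a priori ordering anywhere to initiate the comparison --- translation invariance and the strong maximum principle alone do not force equality. The paper's argument is in the opposite order. After monotonicity, it first uses a compactness argument together with the Liouville result (Theorem~\ref{Th:LiouvilleSemilinearWholeSpace}) on translated sequences $v(\cdot+x_n^k e_n)$ to show $v(x_H,x_n)\to 1$ as $x_n\to+\infty$, and then (again via compactness and Liouville) that this limit is \emph{uniform} in $x_H$. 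Only with the uniform limits in hand does it invoke the sliding method (Proposition~\ref{Prop:HalfSpaceLimUnif}), sliding in directions $\nu$ with $\nu_n>0$: the uniform limit $v\to 1$ provides the starting ordering $v^\tau\ge v$ for $\tau$ large, and letting $\nu$ approach the horizontal directions after the fact gives horizontal independence. Your outline reverses the logical dependence --- you try to get one-dimensionality before the uniform limit --- and this is where it breaks.
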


The result for \eqref{Eq:P2} has been proved for the fractional Laplacian under some assumptions on $f$ (weaker than the ones in Theorem~\ref{Th:SymmHalfSpace}) in \cite{QuaasXia, DipierroSoaveValdinoci, BarriosEtAl-Monotonicity, BarriosEtAl-Symmetry, FallWethMonotonicity}. Instead, no result was available for general integro-differential operators. To the best of our knowledge, problem \eqref{Eq:P1} on odd solutions with respect to a hyperplane has not been treated even for the fractional Laplacian. In our case, the fact that $f$ is of Allen-Cahn type allows us to use rather simple arguments that work for both problems \eqref{Eq:P1} and \eqref{Eq:P2} ---moving planes and sliding methods, similarly as done in \cite{DipierroSoaveValdinoci}. Moreover, the fact that the kernel of the operator is $|\cdot|^{-n-2\s}$ or a general $K$ satisfying uniform ellipticity bounds does not affect significantly the proof.  Although \eqref{Eq:P2} will not be used in this paper, we include it here for future reference since the proof for this problem is analogous to the one for \eqref{Eq:P1}.

\subsection{Saddle-shaped solutions in the context of a conjecture by De Giorgi and the theory of nonlocal minimal surfaces}
\label{Subsec:DeGiorgi}

To conclude this introduction, let us make some comments on the importance of problem \eqref{Eq:NonlocalAllenCahn} and its relation with the theory of (classical and nonlocal) minimal surfaces and a famous conjecture raised by De Giorgi.

A main open problem (even in the local case) is to determine whether the saddle-shaped solution is a minimizer of the energy functional associated to the equation, depending on the dimension $2m$. 
This question is deeply related to the regularity theory of local and nonlocal minimal surfaces, as explained next.

It is well-known that, for powers $\s \in [1/2, 1]$, the rescaled energy functionals associated to the equation $\fraclaplacian u = f(u)$ $\Gamma$-converge to the classical perimeter functional (see \cite{AlbertiBouchitteSeppecher,Gonzalez}), while in the case $\s \in (0,1/2)$, they $\Gamma$-converge to the \emph{fractional perimeter} functional (see \cite{SavinValdinoci-GammaConvergence}). 
Thus, a blow-down sequence of minimizers of the Allen-Cahn energy converges to the characteristic function of a set whose classical or fractional perimeter (depending on the power $\s$) is minimal.

In the recent years there has been an increasing interest in developing a regularity theory for nonlocal minimal surfaces, although very few results are known for the moment. 
It is beyond the scope of this article to describe all of them in detail, and we refer the interested reader to \cite{CozziFigalli-Survey, BucurValdinoci} and the references therein. 
Let us just make some comments on the scarce available results concerning the possible minimality of the Simons cone as a nonlocal minimal surface, since this is connected to our work on saddle-shaped solutions. 
Note first that, due to all its symmetries, it is easy to check that the Simons cone $\ccal$ is stationary for the fractional perimeter. 
If $2m=2$, a purely geometric argument shows that it cannot be a minimizer (see \cite{Valdinoci2013FractionalPerimeter}).
Note indeed that in \cite{SavinValdinoci-Cones} Savin and Valdinoci proved that all minimizing nonlocal minimal cones in $\R^2$ are flat, and that dimension $2$ is the only one where a complete classification of minimizing nonlocal minimal cones is available. 
In higher dimensions, the only available results regarding the possible minimality of $\ccal$ appear in \cite{DaviladelPinoWei} and in our paper \cite{Felipe-Sanz-Perela:SaddleFractional}, but they concern stability, a weaker property than minimality. 

In \cite{DaviladelPinoWei}, Dávila, del Pino, and Wei found a very interesting characterization of the stability of the Simons cone. It consists of an inequality involving  two hypergeometric constants which depend only on $\s$ and the dimension. This inequality is checked numerically in \cite{DaviladelPinoWei}, finding that, in dimensions $2m \leq 6$ and for $\s$ close to zero, the Simons cone is not stable. Numerics also show that the Simons cone should be stable in dimension $8$ if $\s$ is close to zero. These two facts for small $\s$ fit with the general belief that, in the fractional setting, the Simons cone should be stable (and even a minimizer) in dimensions $2m \geq 8$ (as in the local case), probably for all $\s\in(0,1/2)$, though this is still an open problem. 

In contrast with the numeric computations in \cite{DaviladelPinoWei}, our arguments in \cite{Felipe-Sanz-Perela:SaddleFractional} establishing the stability of $\ccal$ in dimensions $2m \geq 14$ are the first analytical proof of a stability result for the Simons cone in any dimension (in the nonlocal setting). Our approach, which is completely different from theirs, relies on establishing the stability of the saddle-shaped solution and using that this property is preserved along a blow-down limit. This shows that the saddle-shaped solution does not only have its interest in the context of the Allen-Cahn equation, but it can also provide strategies to prove stability and minimality results in the theory of nonlocal minimal surfaces.

In addition to all this, saddle-shaped solutions are natural objects to build a counterexample to a famous conjecture raised by De Giorgi, asking whether bounded monotone solutions to $-\Delta  u = u - u^3$ in $\R^n$ are one-dimensional if $n\leq8$.
This conjecture is still nowadays not completely closed (see \cite{FarinaValdinoci-DeGiorgi} and references therein), but a counterexample in dimensions $n\geq 9$ was given in \cite{delPinoKowalczykWei} by using the gluing method. 
An alternative approach to the one of \cite{delPinoKowalczykWei} to construct a counterexample to the conjecture was given by Jerison and Monneau in \cite{JerisonMonneau}. 
They showed that a counterexample in $\R^{n+1}$ can be constructed with a rather natural procedure if there exists a global minimizer of $-\Delta u = f(u)$ in $\R^n$ which is bounded and even with respect to each coordinate but is not one-dimensional. 
The saddle-shaped solution is of special interest in search of this counterexample, since it is even with respect to all the coordinate axis and it is canonically associated to the Simons cone, which in turn is the simplest nonplanar minimizing minimal surface. 
Therefore, by proving that the saddle solution to the classical Allen-Cahn equation is a minimizer in some dimension $2m$, one would obtain automatically a counterexample to the conjecture in $\R^{2m+1}$.

For a more complete account on the available results concerning the conjecture by De Giorgi in the nonlocal setting, as well as to related conjectures on minimizers and stable solutions (in which the saddle-shaped solution is expected to have a role as a counterexample), we refer the interested reader to \cite{SanzPerela-Thesis} and the references therein.

\subsection{Plan of the article}
\label{Subsec:Plan}

The paper is organized as follows. In Section~\ref{Sec:Preliminaries} we present some preliminary results that will be used in the rest of the article.
Section~\ref{Sec:ExistenceUniqueness} contains the proof of the uniqueness of a saddle-shaped solution, as well as the alternative proof of existence ---via the monotone iteration method. 
In Section~\ref{Sec:SymmetryResults} we establish the Liouville type and symmetry results, Theorems~\ref{Th:LiouvilleSemilinearWholeSpace} and \ref{Th:SymmHalfSpace}. Section~\ref{Sec:Asymptotic} is devoted to the layer solution $u_0$ of problem \eqref{Eq:NonlocalAllenCahn}, and to the proof of the asymptotic behavior of saddle-shaped solutions, Theorem~\ref{Th:AsymptoticBehaviorSaddleSolution}. Finally, Section~\ref{Sec:MaximumPrinciple} concerns the proof of a maximum principle in $\ocal$ for the linearized operator $L_K - f'(u)$ (Proposition~\ref{Prop:MaximumPrincipleLinearized}).

\section{Preliminaries}
\label{Sec:Preliminaries}

In this section we collect some preliminary results that will be used in the rest of this paper. First, we summarize the regularity results needed in the forthcoming sections. Then, we state a remark on stability that will be used later in this paper, and finally we recall the basic maximum principles for doubly radial odd functions proved in \cite{FelipeSanz-Perela:IntegroDifferentialI}.

\subsection{Regularity theory for nonlocal operators in the class $\lcal_0$}
\label{Subsec:Regularity}

In this subsection we present the regularity results that will be used in the paper. For further details, see \cite{RosOton-Survey,SerraC2s+alphaRegularity,CozziPassalacqua} and the references therein.

We first give a result on the interior regularity for linear equations.

\begin{proposition}[\cite{RosOton-Survey,SerraC2s+alphaRegularity}]
	\label{Prop:InteriorRegularity}
	Let $L_K \in\lcal_0(n,\s,\lambda, \Lambda)$ and let $w\in L^\infty (\R^n)$ be a weak solution to $L_K w = h$ in $B_1$. Then,
	\begin{equation}
	\label{Eq:C2sEstimate}
	\norm{w}_{C^{2\s} (B_{1/2})} \leq C\bpar{\norm{h}_{L^\infty (B_1)} + \norm{w}_{L^\infty  (\R^n)} }.
	\end{equation}
	Moreover, let $\alpha > 0$ and assume additionally that $w \in C^\alpha (\R^n)$. Then, if $\alpha +
	2\s$ is not an integer,
	\begin{equation}
	\label{Eq:Calpha->Calpha+2sEstimate}
	\norm{w}_{C^{\alpha + 2\s} (B_{1/2})} \leq C\bpar{\norm{h}_{C^{\alpha} (B_1)} + \norm{w}_{C^\alpha (\R^n)} },
	\end{equation}
	where $C$ is a constant that depends only on $n$, $\s$, $\lambda$, and $\Lambda$.
\end{proposition}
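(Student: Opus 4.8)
The plan is to prove the $L^\infty$-to-$C^{2\s}$ estimate \eqref{Eq:C2sEstimate} first and then bootstrap it to the Schauder estimate \eqref{Eq:Calpha->Calpha+2sEstimate}. Both are interior statements, so after covering $B_{1/2}$ by finitely many small balls and using that $L_K$ is translation invariant and that $\lcal_0(n,\s,\lambda,\Lambda)$ is invariant under dilations $x\mapsto x_0+\rho x$ (which turn $L_K w = h$ into $L_{\widetilde K}\widetilde w = \rho^{2\s}\widetilde h$ with $\widetilde K\in\lcal_0$), it is enough to control the relevant seminorm of $w$ at a single interior point by $\norm{h}_{L^\infty(B_1)}+\norm{w}_{L^\infty(\R^n)}$ (respectively by the $C^\alpha$ norms of the data).

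For \eqref{Eq:C2sEstimate} I would decouple the inhomogeneity from the regularity. Let $\bar w$ solve the homogeneous Dirichlet problem $L_K\bar w=0$ in $B_{1/2}$, $\bar w=w$ in $\R^n\setminus B_{1/2}$; then $\norm{\bar w}_{L^\infty(\R^n)}\le\norm{w}_{L^\infty(\R^n)}$ by the maximum principle. The difference $e:=w-\bar w$ solves $L_K e = h$ in $B_{1/2}$ with $e\equiv 0$ outside, so a standard barrier argument (a bounded, nonnegative $\psi$ vanishing outside $B_1$ with $L_K\psi\ge 1$ in $B_{1/2}$) together with the maximum principle gives $\norm{e}_{L^\infty(\R^n)}\le C\norm{h}_{L^\infty(B_1)}$. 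One is thus reduced to interior regularity for $L_K$-harmonic functions, where the Krylov--Safonov / De Giorgi--Nash--Moser theory for operators in $\lcal_0$ furnishes an a priori $C^\gamma$ estimate for some small $\gamma=\gamma(n,\s,\lambda,\Lambda)>0$; rescaling this estimate and iterating it on dyadic balls --- comparing $\bar w$ at each scale with the solution of the same equation having ``frozen'' exterior datum and absorbing the far-field tails of the integral --- improves the oscillation by a fixed factor at every step and upgrades $C^\gamma$ to the sharp bound $\norm{\bar w}_{C^{2\s}(B_{1/4})}\le C\norm{w}_{L^\infty(\R^n)}$ (with an affine correction at each scale when $2\s>1$). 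Combining the bounds on $e$ and on $\bar w$ yields \eqref{Eq:C2sEstimate}.

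For \eqref{Eq:Calpha->Calpha+2sEstimate} I would iterate \eqref{Eq:C2sEstimate} using incremental quotients, which is where translation invariance of $K=K(x-y)$ is essential. Given $w\in C^\alpha(\R^n)$ solving $L_Kw=h$ with $h\in C^\alpha(B_1)$, pick $\beta\le\alpha$ with $\beta+2\s\notin\Z$ and set $w_\tau(x):=\big(w(x+\tau)-w(x)\big)/|\tau|^{\beta}$ for small $\tau$; then $L_K w_\tau = h_\tau$ in a slightly smaller ball, where $h_\tau(x)=\big(h(x+\tau)-h(x)\big)/|\tau|^\beta$ satisfies $\norm{h_\tau}_{L^\infty}\le[h]_{C^\beta(B_1)}$ and $\norm{w_\tau}_{L^\infty(\R^n)}\le[w]_{C^\beta(\R^n)}$. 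Applying \eqref{Eq:C2sEstimate} to $w_\tau$ bounds $[w_\tau]_{C^{2\s}}$ uniformly in $\tau$, and the characterization of Hölder classes through finite differences then gives $w\in C^{\beta+2\s}$ with the corresponding estimate. Starting from the regularity provided by \eqref{Eq:C2sEstimate} and repeating finitely many times --- raising the exponent by (essentially) $2\s$ at each step, choosing the intermediate exponents off the integers, and, once the regularity passes an integer, applying the same scheme to the partial derivatives of $w$ (equivalently, to higher-order difference quotients of $w$), which by translation invariance again solve an equation of the same type --- one reaches $C^{\alpha+2\s}$, the hypothesis $\alpha+2\s\notin\Z$ being precisely what legitimizes the final step.

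The main obstacle is the base estimate \eqref{Eq:C2sEstimate}: extracting the \emph{sharp} order-$2\s$ regularity out of a merely bounded right-hand side. The nonlocal Krylov--Safonov Hölder estimate and, above all, the improvement-of-oscillation argument that promotes a small exponent $\gamma$ to the exact exponent $2\s$ are delicate --- one must carefully estimate the contribution of the far field in the integral defining $L_K$ and, when $2\s>1$, track the optimal affine correction at each dyadic scale. By contrast the incremental-quotient bootstrap that yields \eqref{Eq:Calpha->Calpha+2sEstimate} is routine, the only genuine subtlety being the passage from $w$ to its derivatives across integer thresholds, which is exactly why the exclusion $\alpha+2\s\notin\Z$ is imposed.
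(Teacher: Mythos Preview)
The paper does not prove this proposition at all: it is quoted verbatim from the literature (the survey of Ros-Oton and Serra's $C^{2\s+\alpha}$ paper), and the subsequent paragraphs only \emph{apply} it to bootstrap regularity of saddle-shaped solutions. So there is no ``paper's own proof'' to compare your proposal against.

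That said, your outline is a faithful sketch of how the cited references establish the result. The decomposition $w=\bar w+e$ into an $L_K$-harmonic part and a compactly supported remainder controlled by a barrier, followed by the nonlocal Krylov--Safonov estimate (Caffarelli--Silvestre) and an iteration to reach the sharp exponent $2\s$, is indeed the structure of the argument in Serra's paper; and the incremental-quotient bootstrap for \eqref{Eq:Calpha->Calpha+2sEstimate} is exactly how Schauder estimates are obtained for translation-invariant kernels. You have also correctly identified the genuine difficulty: promoting the small H\"older exponent $\gamma$ to the precise order $2\s$. One point worth sharpening is that in Serra's work this promotion is carried out via a blow-up and Liouville-type argument rather than the dyadic oscillation-improvement you describe; your description (``absorbing far-field tails'', ``affine correction at each scale'') gestures at the right issues but is closer to a Campanato-style scheme, which is an alternative route that also works but whose details are somewhat different. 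Either way, the overall plan is sound and matches the references the paper cites.
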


Throughout the paper we consider $u$ to be a saddle solution to \eqref{Eq:NonlocalAllenCahn} that satisfies $|u|\leq 1$ in $\R^n$. Hence, by applying \eqref{Eq:C2sEstimate} we find that for any $x_0\in \R^n$,
\begin{align*}
\norm{u}_{C^{2\s} (B_{1/2} (x_0))} &\leq C\bpar{\norm{f(u)}_{L^\infty (B_1(x_0))} + \norm{u}_{L^\infty  (\R^n)} } \\
&\leq C\bpar{1 + \norm{f}_{L^\infty ([-1,1])} }.
\end{align*}
Note that the estimate is independent of the point $x_0$, and thus since the equation is satisfied in the whole $\R^n$,
$$
\norm{u}_{C^{2\s}(\R^n)} \leq C\bpar{1 + \norm{f}_{L^\infty ([-1,1])} }.
$$
Then, we use estimate \eqref{Eq:Calpha->Calpha+2sEstimate} repeatedly and the same kind of arguments yield that, if $f\in C^{k}([-1,1])$, then $u\in C^{\alpha}(\R^n)$ for all $\alpha < k+ 2 \s$. Moreover, the following estimate holds:
$$
\norm{u}_{C^{\alpha}(\R^n)} \leq C\,,
$$
for some constant $C$ depending only on $n$, $\s$, $\lambda$, $\Lambda$, $k$, and $\norm{f}_{C^k([-1,1])}$.

Let us now state a result on the boundary regularity of solutions to a Dirichlet problem for an operator $L_K\in\lcal_0$. 

\begin{proposition}[\cite{CozziPassalacqua,RosOton-Survey}]
	\label{Prop:BoundaryRegularity}
	Let $L_K \in\lcal_0(n,\s,\lambda, \Lambda)$ and let $w\in L^\infty (\R^n)$ be a weak solution to
	$$
	\beqc{\PDEsystem}
	L_K w & = & h & \text{ in } \Omega\,,\\
	w & = & \varphi & \text{ in } \R^n \setminus \Omega\,,
	\eeqc
	$$
	with $h\in L^\infty(\Omega)$ and $\varphi \in C^{2\s + \eta}(\R^n\setminus \Omega)$ for some $\eta \in (0,2-2\s)$. Assume that $\Omega$ is a bounded $C^{1,1}$ domain.
	
	Then, there exists an $\alpha_0 \in (0,\s)$, depending only on $n$, $\s$, $\lambda$, $\Lambda$, and $\eta$, such that
	\begin{equation*}
	\norm{w}_{C^{\alpha_0} (\overline{\Omega})} \leq C\bpar{\norm{h}_{L^\infty (\Omega)} + \norm{\varphi}_{C^{2\s+\eta}(\R^n\setminus \Omega)}},
	\end{equation*}
	where $C$ is a constant that depends only on $n$, $\s$, $\lambda$, $\Lambda$, $\eta$, and $\Omega$.
\end{proposition}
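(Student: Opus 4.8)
The plan is the classical three-part argument for boundary regularity of nonlocal Dirichlet problems. First I would reduce to zero exterior data: extend $\varphi$ to $\tilde\varphi\in C^{2\s+\eta}(\R^n)$ with $\norm{\tilde\varphi}_{C^{2\s+\eta}(\R^n)}\le C\norm{\varphi}_{C^{2\s+\eta}(\R^n\setminus\Omega)}$ (possible since $\Omega$ is bounded), observe that $2\s<2\s+\eta<2$ together with the ellipticity bounds \eqref{Eq:Ellipticity} and a second-order Taylor expansion give $L_K\tilde\varphi\in L^\infty(\R^n)$ with norm controlled by $\norm{\tilde\varphi}_{C^{2\s+\eta}(\R^n)}$, and set $\tilde w:=w-\tilde\varphi$, which solves $L_K\tilde w=h-L_K\tilde\varphi=:\tilde h$ in $\Omega$ with $\tilde w\equiv 0$ in $\R^n\setminus\Omega$ and $\norm{\tilde h}_{L^\infty(\Omega)}$ bounded by the right-hand side of the claimed estimate. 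One also controls $\norm{\tilde w}_{L^\infty(\R^n)}$ by the elementary weak maximum principle for $L_K$, comparing $\tilde w$ with a paraboloid of opening $\sim\norm{\tilde h}_{L^\infty(\Omega)}$ on a large ball containing $\Omega$. Thus it suffices to prove the estimate when $\varphi\equiv 0$.

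Second, I would establish the sharp boundary decay $|\tilde w(x)|\le C\,d(x)^{\s}$, where $d(x):=\dist(x,\partial\Omega)$. Since $\Omega$ is $C^{1,1}$ it satisfies a uniform exterior ball condition: for every $z\in\partial\Omega$ there is a ball $B_{\rho_0}(y_z)\subset\R^n\setminus\Omega$ with $z\in\partial B_{\rho_0}(y_z)$ and $\rho_0$ depending only on $\Omega$. The key ingredient is a barrier for the complement of the unit ball: a function $\Psi\ge 0$ with $\Psi\equiv 0$ on $\overline{B_1}$, $c\,(|x|-1)^{\s}\le\Psi(x)\le C\,(|x|-1)^{\s}$ for $1<|x|<2$, $\Psi$ bounded, and $L_K\Psi\ge 1$ in $\{1<|x|<2\}$, with constants depending only on $n,\s,\lambda,\Lambda$. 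Rescaling and translating $\Psi$ to $B_{\rho_0}(y_z)$ and multiplying by a constant $M\sim\norm{\tilde h}_{L^\infty(\Omega)}+\norm{\tilde w}_{L^\infty(\R^n)}$ produces, near each $z$, a supersolution $\Psi_z$ with $L_K\Psi_z\ge\norm{\tilde h}_{L^\infty(\Omega)}$ in $\Omega\cap\{|x-y_z|<2\rho_0\}$ and $\Psi_z\ge\tilde w$ outside that set; the weak maximum principle for $L_K$ then gives $\tilde w\le\Psi_z\le C\,\dist(\cdot,B_{\rho_0}(y_z))^{\s}$ near $z$. Choosing $z$ to be the point of $\partial\Omega$ closest to $x$ one computes $\dist(x,B_{\rho_0}(y_z))=d(x)$, hence $\tilde w(x)\le C\,d(x)^{\s}$; the same argument for $-\tilde w$ yields $|\tilde w(x)|\le C\,d(x)^{\s}$.

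Third, I would promote this growth bound to an estimate in $C^{\s}(\overline\Omega)$ (hence in $C^{\alpha_0}(\overline\Omega)$ for every $\alpha_0<\s$) by rescaling and the interior estimate \eqref{Eq:C2sEstimate}. Fix $x\in\Omega$ with $2\rho:=d(x)$ small and put $w_\rho(y):=\tilde w(x+\rho y)$, which solves $L_{K_\rho}w_\rho=\rho^{2\s}\tilde h(x+\rho\cdot)$ in $B_1$ with $K_\rho(z):=\rho^{n+2\s}K(\rho z)$ again in $\lcal_0(n,\s,\lambda,\Lambda)$ with the same constants. The decay bound gives $|w_\rho(y)|\le C\,(d(x)+\rho|y|)^{\s}\le C\rho^{\s}(2+|y|)^{\s}$ for all $y$, so $\norm{w_\rho}_{L^\infty(B_1)}\le C\rho^{\s}$, and since $\s<2\s$ the far-field integral $\int_{\R^n\setminus B_2}(2+|y|)^{\s}(1+|y|)^{-n-2\s}\d y$ is finite; splitting $w_\rho$ into its restriction to $B_2$ plus a far part absorbed into the right-hand side, \eqref{Eq:C2sEstimate} yields $\norm{w_\rho}_{C^{2\s}(B_{1/2})}\le C\rho^{\s}$ for $\rho\le 1$, that is, $\seminorm{\tilde w}_{C^{2\s}(B_{d(x)/4}(x))}\le C\,d(x)^{-\s}$. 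A standard two-case estimate then concludes: for $x,y\in\Omega$ with $r:=|x-y|$ and $d(x)\le d(y)$, if $r\ge d(x)/4$ then $|\tilde w(x)-\tilde w(y)|\le|\tilde w(x)|+|\tilde w(y)|\le C\,d(x)^{\s}+C\,d(y)^{\s}\le C\,r^{\s}$ (using $d(x)\le 4r$ and $d(y)\le 5r$), while if $r<d(x)/4$ then $y\in B_{d(x)/4}(x)$ and $|\tilde w(x)-\tilde w(y)|\le\seminorm{\tilde w}_{C^{2\s}(B_{d(x)/4}(x))}\,r^{2\s}\le C\,d(x)^{-\s}r^{2\s}\le C\,r^{\s}$. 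Undoing the reduction of the first step proves the proposition.

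I expect the barrier in the second step to be the main obstacle: for the fractional Laplacian one may take $\Psi$ essentially in closed form via Getoor-type identities, but a general $L_K\in\lcal_0$ has no explicit torsion-type solution, so one must construct by hand a supersolution comparable to the $\s$-power of the distance to the exterior ball, with constants uniform over the whole ellipticity class. This is precisely the technical heart of the boundary-regularity theory for $\lcal_0$ developed in \cite{RosOton-Survey,SerraC2s+alphaRegularity} (and, for the one-dimensional operators relevant to the layer solution, in \cite{CozziPassalacqua}); the reduction to zero exterior data and the rescaling argument, by contrast, are routine.
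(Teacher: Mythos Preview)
The paper does not prove this proposition: it is quoted directly from \cite{CozziPassalacqua,RosOton-Survey} as a known boundary-regularity result for operators in $\lcal_0$, and is used later only as a black box. Your sketch is correct and is precisely the barrier-plus-rescaling strategy carried out in those references: reduce to zero exterior data by subtracting a $C^{2\s+\eta}$ extension of $\varphi$ (whose image under $L_K$ is bounded because $2\s<2\s+\eta<2$), obtain the decay $|\tilde w|\le C\,d^{\s}$ via an exterior-ball barrier for the class $\lcal_0$, and upgrade to a global H\"older estimate by rescaling and invoking the interior bound \eqref{Eq:C2sEstimate}. You also correctly flag the barrier construction as the only nontrivial step for general kernels; the rest is routine. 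So there is nothing to compare---your argument simply supplies the proof that the paper chooses to cite.
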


Note that this result can be combined with the interior estimate \eqref{Eq:Calpha->Calpha+2sEstimate} to prove that weak solutions are indeed classical solutions.

\subsection{A remark on stability}
\label{Subsec:RemarkStability}

Recall that we say that a bounded solution $w$ to $L_K w = f(w)$ in $\Omega\subset \R^n$ is \emph{stable} in $\Omega$ if the second variation of the energy at $w$ is nonnegative. That is, if
\begin{equation*}
\dfrac{1}{2} \int_{\R^n} \int_{\R^n} |\xi (x) - \xi(y)|^2 K(x - y) \d x \d y - \int_{\Omega} f'(w) \xi^2 \d x \geq 0
\end{equation*}
for every $\xi \in C^\infty_c (\Omega)$.

The following fact regarding stability will be used in Sections~\ref{Sec:SymmetryResults} and \ref{Sec:Asymptotic}. 
Let $w \leq 1$ be a positive solution to $L_K w = f(w)$ in a set $\Omega\subset \R^n$, with $f$ satisfying \eqref{Eq:Hypothesesf}. 
Then $w$ is stable in $\Omega$.
 
The proof of this fact is standard and rather simple, and it is a consequence of the fact that $w$ is a positive supersolution of the linearized operator $L_K - f'(w)$. We present it here for completeness (a more detailed discussion can be found in \cite{HamelRosOtonSireValdinoci}). 
On the one hand, since $f$ is strictly concave in $(0,1)$ and $f(0)=0$, then $f'(w)w<f(w)$ in $\Omega$ (recall that $w$ is positive there). On the other hand, it is easy to check that the following pointwise inequality holds for all functions $\varphi$ and $\xi$, with $\varphi>0$:
\begin{equation}
\label{Eq:IdentityStability}
\big (\varphi(x) - \varphi(y) \big) \bpar{\dfrac{\xi^2(x)}{\varphi(x)} - \dfrac{\xi^2(y)}{\varphi(y)} } \leq |\xi (x) - \xi(y)|^2\,.
\end{equation}
Using these two facts and the symmetry of $K$, for every $\xi\in C^\infty_c(\Omega)$ we have
\begin{align*}
\int_\Omega f'(w) \xi^2 \d x & \leq \int_\Omega  \dfrac{\xi^2}{w} f(w) \d x = \int_\Omega  \dfrac{\xi^2}{w} L_Kw \d x \\ 
&= \dfrac{1}{2} \int_{\R^{2m}} \int_{\R^{2m}} \big ( w(x) - w(y) \big) \bpar{\dfrac{\xi^2(x)}{w(x)} - \dfrac{\xi^2(y)}{w(y)} } K(x - y) \d x \d y
\\ 
&\leq \dfrac{1}{2} \int_{\R^{2m}} \int_{\R^{2m}} |\xi (x) - \xi(y)|^2 K(x - y) \d x \d y\,.
\end{align*}
Thus, $w$ is stable in $\Omega$.

\subsection{Maximum principles for doubly radial odd functions}
\label{Subsec:MaxPrinciples}

In this last subsection, we state the basic maximum principles for doubly radial odd functions. Note that in the following result we only need assumptions on the functions at one side of the Simons cone thanks to their symmetry. This was proved in part~I and follows readily from the expression \eqref{Eq:OperatorOddF} by using the key inequality \eqref{Eq:KernelInequality} for the kernel $\overline{K}$.

\begin{proposition}[Maximum principle for odd functions with respect to $\ccal$ \cite{FelipeSanz-Perela:IntegroDifferentialI}]
	\label{Prop:MaximumPrincipleForOddFunctions} 
	Let $\Omega \subset \ocal$ be an open set and let $L_K$ be an integro-differential operator with a radially symmetric kernel $K$ satisfying the positivity condition \eqref{Eq:KernelInequality} and such that $L_K\in \lcal_0(2m, \s, \lambda, \Lambda)$.  
	Let $w\in C^{\alpha}(\Omega)\cap C(\overline{\Omega})\cap L^\infty(\R^{2m})$, with $\alpha > 2\s$, be a doubly radial function which is odd with respect to the Simons cone. 
	
	\begin{enumerate}[label=(\roman{*})]
		\item  (Weak maximum principle)
		Assume that
		$$
		\beqc{\PDEsystem}
		L_K w + c(x) w & \geq & 0 & \text{ in } \Omega\,,\\
		w & \geq & 0 & \text{ in } \ocal \setminus \Omega\,,
		\eeqc
		$$
		with $c \geq 0$, and that either
		$$
		\Omega \text{ is bounded} \quad \text{ or } \liminf_{x \in \ocal,\,|x|\to +\infty} w(x) \geq 0\,.
		$$
		Then, $w \geq 0$ in $\Omega$.
		
		\item (Strong maximum principle)  
		Assume that $L_K w + c(x) w\geq 0$ in $\Omega$, with $c$ any continuous function, and that $w\geq 0$ in $\ocal$. Then, either $w\equiv 0$ in $\ocal$ or $w > 0$ in $\Omega$.
	\end{enumerate} 
\end{proposition}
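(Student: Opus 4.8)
The plan is to argue by contradiction at a minimum point, exploiting the representation \eqref{Eq:OperatorOddF} of $L_K$ on doubly radial odd functions: the key is that, for such functions, $L_Kw(x)$ splits as a nonlocal term with the strictly positive kernel $\overline{K}(x,y)-\overline{K}(x,y^\star)$ from \eqref{Eq:KernelInequality} plus a zero-order term $2w(x)\int_{\ocal}\overline{K}(x,y^\star)\d y$ whose coefficient is strictly positive by \eqref{Eq:ZeroOrderTerm}. Before starting I would record the elementary fact that any doubly radial $w$ odd with respect to $\ccal$ vanishes on $\ccal$: if $|x'|=|x''|$ then, being doubly radial, $w(x)=w(|x'|,|x''|)=w(|x''|,|x'|)=w(x^\star)$, while oddness gives $w(x)=-w(x^\star)$, so $w(x)=0$. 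In particular $w\geq0$ on all of $\partial\Omega$: on $\partial\Omega\cap\ocal\subset\ocal\setminus\Omega$ by hypothesis, and on $\partial\Omega\cap\ccal$ by this observation (using $\overline{\Omega}\subset\overline{\ocal}=\ocal\cup\ccal$).

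For part (i) I would suppose $m:=-\inf_\Omega w>0$ and first locate a point where the infimum is attained. If $\Omega$ is bounded this is immediate from $w\in C(\overline{\Omega})$; if $\Omega$ is unbounded, the hypothesis $\liminf_{|x|\to\infty}w\geq0$ forces any minimizing sequence eventually into a fixed ball (once $w<-m/2$ along it), so up to a subsequence it converges to some $x_0\in\overline{\Omega}$ with $w(x_0)=-m<0$, and since $w\geq0$ on $\partial\Omega$ necessarily $x_0\in\Omega$. Then I would evaluate \eqref{Eq:OperatorOddF} at $x_0$, which is legitimate pointwise because $w\in C^\alpha(\Omega)$ with $\alpha>2\s$ and $w\in L^\infty(\R^{2m})$: for every $y\in\ocal$ one has $w(x_0)-w(y)\leq0$ (since $w(y)\geq\inf_\Omega w=w(x_0)$ if $y\in\Omega$, and $w(y)\geq0>w(x_0)$ if $y\in\ocal\setminus\Omega$), so by \eqref{Eq:KernelInequality} the nonlocal term is $\leq0$, while the zero-order term $2w(x_0)\int_{\ocal}\overline{K}(x_0,y^\star)\d y$ is strictly negative by \eqref{Eq:ZeroOrderTerm}. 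Hence $L_Kw(x_0)<0$; combined with $c(x_0)w(x_0)\leq0$ (as $c\geq0$ and $w(x_0)<0$) this gives $L_Kw(x_0)+c(x_0)w(x_0)<0$, contradicting the hypothesis. Therefore $\inf_\Omega w\geq0$.

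For part (ii) I would suppose $w\geq0$ in $\ocal$ and $w(x_0)=0$ at some $x_0\in\Omega$, so $x_0$ is a global minimum of $w$ over $\ocal$. Evaluating \eqref{Eq:OperatorOddF} at $x_0$, the zero-order term vanishes and
\[
L_Kw(x_0)=-\int_{\ocal}w(y)\big(\overline{K}(x_0,y)-\overline{K}(x_0,y^\star)\big)\d y\leq0.
\]
Since $L_Kw(x_0)+c(x_0)w(x_0)=L_Kw(x_0)\geq0$ by hypothesis, this integral must vanish; as the integrand is nonnegative and $\overline{K}(x_0,y)-\overline{K}(x_0,y^\star)>0$ for every $y\in\ocal$ by \eqref{Eq:KernelInequality}, we conclude $w\equiv0$ in $\ocal$. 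Hence either $w>0$ everywhere in $\Omega$, or $w\equiv0$ in $\ocal$.

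I do not expect a genuine obstacle: the statement is essentially forced by the sign structure of the two terms in \eqref{Eq:OperatorOddF}. The only point requiring care is the unbounded case of (i), where the decay-at-infinity hypothesis is needed precisely to keep a minimizing sequence from escaping to infinity, and where recording beforehand that odd doubly radial functions vanish on $\ccal$ is what rules out the limiting minimum point lying on the cone instead of in $\Omega$.
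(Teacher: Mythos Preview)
Your proof is correct and follows essentially the same approach as the paper: the paper does not reprove this result in full (it is taken from part~I) but explicitly states that it ``follows readily from the expression \eqref{Eq:OperatorOddF} by using the key inequality \eqref{Eq:KernelInequality},'' which is exactly your argument via contradiction at a negative minimum. Your care in handling the unbounded case of (i) and in observing that doubly radial odd functions vanish on $\ccal$ (to rule out a boundary minimum on the cone) matches what is needed, and your proof of (ii) by forcing the nonnegative integrand to vanish against the strictly positive kernel difference is the standard strong maximum principle mechanism in this setting.
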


\begin{remark}
	\label{Remark:MaxPrincipleSingularity}
	Following the proof of this result in part~I, it is easy to see that the interior regularity assumptions on $w$ in the previous statement can be weakened.
	Indeed, we are assuming that $w\in C^\alpha(\Omega)$ with $\alpha>2\s$ in order to guarantee that $L_K w$ is finite everywhere in $\Omega$.
	Instead of this, we can simply assume that $w$ is Hölder continuous in $\Omega$ (with Hölder exponent arbitrarily small), as long as $L_K w = + \infty$ at the points of $\Omega$ where $w$ is not regular enough for $L_K w$ to be finite. 
	In such case, $L_K w + c(x) w \geq 0$ holds as well and we can proceed with the argument as done in part~I.
	
	Proposition~\ref{Prop:MaximumPrincipleForOddFunctions} with these weaker assumptions on $w$ will used later in the proof of Theorem~\ref{Th:ExistenceUniqueness} (see Remark~\ref{Remark:CsRegularityFirstEigenfunction} below): we will apply it to a function $w$ being no more regular than $C^{\alpha_0}$ at some points in the interior of $\Omega$, where $\alpha_0$ is given by Proposition~\ref{Prop:BoundaryRegularity}. 
\end{remark}


\section{Existence and uniqueness of the saddle-shaped solution: monotone iteration method}
\label{Sec:ExistenceUniqueness}

In this section we prove the existence and uniqueness result of Theorem~\ref{Th:ExistenceUniqueness}. 
The proof of the existence is based on the maximum principle and the first ingredient that we need is a version of the monotone iteration procedure for doubly radial functions which are odd with respect to the Simons cone $\ccal$.
In order to prove the uniqueness we will use the asymptotic behavior result of Theorem~\ref{Th:AsymptoticBehaviorSaddleSolution} together with the maximum principle for the linearized operator $L_K - f'(u)$, given in Proposition~\ref{Prop:MaximumPrincipleLinearized}; both results will be proved in the subsequent sections.

We next present the monotone iteration method for doubly radial odd functions. 
In this result and along the section, we will call odd sub/supersolutions to problem \eqref{Eq:SemilinearSolutionInBall} the functions that are doubly radial, odd with respect to the Simons cone, and satisfy the corresponding problem in \eqref{Eq:SemilinearSubSuperSolutionInBall}.

\begin{proposition}
	\label{Prop:MonotoneIterationOdd}
	Let $\s\in (0,1)$ and let $K$ be a radially symmetric kernel  satisfying the convexity assumption \eqref{Eq:SqrtConvex} and such that $L_K\in \lcal_0$. Assume that $\vsub \leq \vsup$ are two bounded functions which are doubly radial, odd with respect to the Simons cone, and belonging to $C^{2\s + \varepsilon} (B_R)$ for some $\varepsilon>0$. Furthermore, assume that $\vsub\in C^\varepsilon(\overline{B_R})$ and that $\vsub$ and $\vsup$ satisfy respectively   
	\begin{equation}
	\label{Eq:SemilinearSubSuperSolutionInBall}
	\beqc{\PDEsystem}
	L_K\vsub & \leq & f(\vsub) & \textrm{ in } B_R \cap \ocal\,, \\
	\vsub & \leq & \varphi & \textrm{ in } \ocal \setminus B_R\,, 
	\eeqc
	\quad \textrm{ and } \quad 
	\beqc{\PDEsystem}
	L_K\vsup & \geq & f(\vsup) & \textrm{ in } B_R \cap \ocal\,, \\
	\vsup & \geq & \varphi & \textrm{ in } \ocal \setminus B_R\,, 
	\eeqc
	\end{equation}
	with $f$ a $C^1$ odd function and $\varphi\in C^{2\s+\varepsilon}(\R^n)$ a bounded doubly radial odd function.
	
	Then, there exists a classical solution $v$ to the problem
	\begin{equation}
	\label{Eq:SemilinearSolutionInBall}
	\beqc{\PDEsystem}
	L_K v & = & f(v) & \textrm{ in } B_R\,, \\
	v &=& \varphi &  \textrm{ in } \R^{2m} \setminus B_R\,, 
	\eeqc
	\end{equation}
	such that $v\in C^{2\s+\tilde\varepsilon}(B_R)\cap  C^{\tilde\varepsilon}(\overline{B_R}) $ for some $\tilde\varepsilon>0$, it is doubly radial, odd with respect to the Simons cone, and  $\vsub \leq v \leq \vsup$ in $\ocal$.
\end{proposition}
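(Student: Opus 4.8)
The plan is to run the classical monotone iteration scheme, but carried out entirely within the class of doubly radial functions that are odd with respect to the Simons cone, so that the maximum principle for odd functions (Proposition~\ref{Prop:MaximumPrincipleForOddFunctions}) replaces the ordinary maximum principle at every step. First I would reduce the semilinear Dirichlet problem to a sequence of linear Dirichlet problems. Since $f\in C^1$ and $\vsub\leq\vsup$ are bounded, on the compact interval $[\min\vsub,\max\vsup]$ there is a constant $c_0>0$ with $f'(\tau)\geq -c_0$ there; replacing $f(v)$ by $f(v)+c_0 v$ on the left and $c_0 v$ on the right, the map $\tau\mapsto f(\tau)+c_0\tau$ is nondecreasing on the relevant range. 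Set $v_0:=\vsup$ and define $v_{k+1}$ inductively as the solution of the linear problem $L_K v_{k+1}+c_0 v_{k+1}=f(v_k)+c_0 v_k$ in $B_R$, $v_{k+1}=\varphi$ in $\R^{2m}\setminus B_R$. Existence, uniqueness and regularity of $v_{k+1}$ come from the linear theory recalled in Section~\ref{Sec:Preliminaries}: the right-hand side $f(v_k)+c_0 v_k$ is bounded (and Hölder, once $v_k$ is), $\varphi\in C^{2\s+\varepsilon}$, and $B_R$ is a smooth domain, so Propositions~\ref{Prop:InteriorRegularity} and \ref{Prop:BoundaryRegularity} give $v_{k+1}\in C^{2\s+\tilde\varepsilon}(B_R)\cap C^{\tilde\varepsilon}(\overline{B_R})$.

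The next point is that the iteration preserves the symmetry. Because $K$ is radially symmetric, $\varphi$ is doubly radial and odd, and the zeroth-order term $c_0$ and the datum $f(v_k)+c_0 v_k$ inherit the double radial symmetry and oddness of $v_k$ (oddness of $f$ is used here), the function $\widetilde{v}_{k+1}(x):=-v_{k+1}(x^\star)$ solves the same linear Dirichlet problem as $v_{k+1}$; by uniqueness of the linear problem, $v_{k+1}=\widetilde v_{k+1}$, i.e.\ $v_{k+1}$ is again doubly radial and odd with respect to $\ccal$. This is the mechanism that lets us stay in the right function class throughout.

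Then I would establish monotonicity and the barrier bounds by induction: $\vsub\leq v_{k+1}\leq v_k\leq\vsup$ in $\ocal$ (hence, by oddness, the reversed inequalities in $\ical$, and equality with $\varphi$ on $\ccal$). Each inequality is checked by applying the weak maximum principle for odd functions of Proposition~\ref{Prop:MaximumPrincipleForOddFunctions}(i) to the appropriate difference. For instance, $w:=v_k-v_{k+1}$ satisfies, using that $\vsup$ is a supersolution and $\tau\mapsto f(\tau)+c_0\tau$ is nondecreasing together with $v_k\leq v_{k-1}$ (or $v_0=\vsup$ at the first step), $L_K w + c_0 w = (f(v_{k-1})+c_0 v_{k-1}) - (f(v_k)+c_0 v_k)\geq 0$ in $B_R\cap\ocal$, while $w\geq 0$ in $\ocal\setminus B_R$ (both equal $\varphi$ outside $B_R$, and the comparison with $\vsup$, $\vsub$ there follows from the hypotheses on $\vsub,\vsup$); since $\Omega=B_R\cap\ocal$ is bounded and $c_0\geq 0$, the maximum principle gives $w\geq 0$ in $B_R\cap\ocal$, hence in all of $\ocal$. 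The comparison $v_{k+1}\geq\vsub$ is analogous, using that $\vsub$ is a subsolution. So $\{v_k\}$ is a monotone nonincreasing sequence, bounded below by $\vsub$; it has a pointwise limit $v$ with $\vsub\leq v\leq\vsup$ in $\ocal$, $v$ doubly radial and odd with respect to $\ccal$, and $v=\varphi$ in $\R^{2m}\setminus B_R$.

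Finally I would upgrade the pointwise limit to a classical solution. The uniform bounds on $\|v_k\|_{L^\infty(\R^{2m})}$ (between $\min\vsub$ and $\max\vsup$) feed into the interior estimate \eqref{Eq:C2sEstimate} to give a uniform $C^{2\s}(B_{R'})$ bound on every compact $B_{R'}\subset B_R$; bootstrapping once more with \eqref{Eq:Calpha->Calpha+2sEstimate} (the right-hand sides $f(v_k)+c_0v_k$ are uniformly $C^{2\s}$) gives uniform $C^{2\s+\tilde\varepsilon}$ bounds on compact subsets, and the boundary estimate of Proposition~\ref{Prop:BoundaryRegularity} gives a uniform $C^{\tilde\varepsilon}(\overline{B_R})$ bound. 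By Arzel\`a--Ascoli the convergence $v_k\to v$ is in $C^{2\s+\tilde\varepsilon/2}_{\loc}(B_R)\cap C(\overline{B_R})$ (along a subsequence, but monotonicity forces the whole sequence), which is enough to pass to the limit in $L_K v_{k+1}+c_0 v_{k+1}=f(v_k)+c_0v_k$ pointwise and conclude $L_K v=f(v)$ in $B_R$ in the classical sense, with $v\in C^{2\s+\tilde\varepsilon}(B_R)\cap C^{\tilde\varepsilon}(\overline{B_R})$.

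I expect the main obstacle to be a bookkeeping issue rather than a conceptual one: making sure, at each stage of the induction, that the comparison function one feeds into Proposition~\ref{Prop:MaximumPrincipleForOddFunctions} genuinely satisfies the exterior condition $w\geq 0$ in \emph{all} of $\ocal\setminus\Omega$ (not just in $\ocal\setminus B_R$, but also in the part of $\ocal$ inside $B_R$ that lies on the cone side), and that the Hölder regularity produced by the linear theory is enough to make $L_K v_k$ pointwise finite so that the differenced equations hold everywhere in $B_R\cap\ocal$; the exponent $\alpha_0<\s$ from Proposition~\ref{Prop:BoundaryRegularity} is below $2\s$, so one must invoke the weakened regularity hypotheses of Remark~\ref{Remark:MaxPrincipleSingularity} (where $L_K w=+\infty$ at the non-regular points only reinforces the supersolution inequality) to legitimately apply the maximum principle up to the cone.
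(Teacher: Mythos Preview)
Your proposal is correct and follows essentially the same approach as the paper: a classical monotone iteration, carried out within the class of doubly radial odd functions so that Proposition~\ref{Prop:MaximumPrincipleForOddFunctions} replaces the ordinary maximum principle at each comparison step, followed by uniform regularity estimates and Arzel\`a--Ascoli. The only difference is that you start from $v_0=\vsup$ and iterate downward, whereas the paper sets $v_0=\vsub$ and iterates upward; both work under the stated hypotheses, but note that the paper's choice is precisely what singles out the asymmetric assumption $\vsub\in C^\varepsilon(\overline{B_R})$ (rather than the analogous one on $\vsup$) as the one that cannot be dropped, as stressed in the remark immediately following the proposition.
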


In the previous statement we required $C^{2\s + \varepsilon}$ regularity on $\vsub$ and $\vsup$ in order to $L_K$ be finite when applied to them.
In view of Remark~\ref{Remark:MaxPrincipleSingularity}, we can relax this assumption, since we do not need the operator to be finite in the whole set $B_R$ when applied to a subsolution (respectively supersolution), it can take the value $-\infty$ (respectively $+\infty$) at some points.
Note, however, that we cannot drop the assumption $\vsub\in C^\varepsilon(\overline{B_R})$ if we want $v$  to have the desired regularity.

\begin{proof}[Proof of Proposition~\ref{Prop:MonotoneIterationOdd}]
	The proof follows the classical monotone iteration method for elliptic equations (see for instance \cite{Evans}). We just give here a sketch of the proof. 
	First, let $M \geq 0$ be such that $-M \leq \vsub \leq \vsup \leq M$ and set
	$$
	b := \max \left \{{0, - \min_{[-M,M]}f'}\right \}\geq 0\,.
	$$
	Then one defines 
	$$
	\widetilde{L}_K w := L_Kw + b w 	\quad \text{ and } \quad 	g(\tau) := f(\tau) + b \tau\,.
	$$
	Therefore, our problem is equivalent to find a solution to
	$$
	\beqc{\PDEsystem}
	\widetilde{L}_Kv & = & g(v) & \textrm{ in } B_R\,, \\
	v &=& \varphi &  \textrm{ in } \R^{2m} \setminus B_R\,, 
	\eeqc
	$$
	such that $v$ is doubly radial, odd with respect to the Simons cone and  $\vsub \leq v \leq \vsup$ in $\ocal$. Here the main point is that $g$ is also odd but satisfies $g'(\tau) \geq 0$ for $\tau \in [-M,M]$. Moreover, since $b \geq 0$, $\widetilde{L}_K$ satisfies the maximum principle for odd functions in $\ocal$ (as in Proposition~\ref{Prop:MaximumPrincipleForOddFunctions}).
	
	We define $v_0 = \vsub$ and, for $k\geq 1$, let $v_k$ be the solution to the linear problem
	$$
	\beqc{\PDEsystem}
	\widetilde{L}_K v_k & = & g(v_{k-1}) & \textrm{ in } B_R\,, \\
	v_k &=& \varphi &  \textrm{ in } \R^{2m} \setminus B_R\,. 
	\eeqc
	$$
	It is easy to see by induction and the regularity results from Proposition~\ref{Prop:InteriorRegularity} that $v_k\in L^\infty(\R^n) \cap C^{2\s+2\tilde\varepsilon}(B_R)\cap C^{2\tilde\varepsilon}(\overline{B_R})$ for some $\tilde\varepsilon>0$. 
	Moreover, given $\Omega\subset B_R$ a compact set, then $\norm{v_k}_{C^{2\s+2\tilde\varepsilon}(\Omega)}$ is uniformly bounded in $k$.
	
	Then, using the maximum principle it is not difficult to show by induction that 
	$$
	\vsub = v_0 \leq v_1 \leq \ldots \leq v_k \leq v_{k+1} \leq \ldots \vsup \quad \text{ in }\ocal\,,
	$$
	and that each function $v
	_k$ is doubly radial and odd with respect to $\ccal$. Finally, by the Arzelà-Ascoli theorem and the compact embedding of H\"older spaces we see that, up to a subsequence, $v_k$ converges to the desired solution $v\in C^{2\s+\tilde\varepsilon}(B_R)\cap  C^{\tilde\varepsilon}(\overline{B_R}) $.
\end{proof}

In order to construct a positive subsolution to \eqref{Eq:SemilinearSolutionInBall} with zero exterior data, we also need a characterization and some properties of the first odd eigenfunction and eigenvalue for the operator $L_K$, which are presented next. This eigenfunction is obtained though a minimization of the Rayleigh quotient in the appropriate space, defined next.

Given a set $\Omega \subset \R^{2m}$ and a translation invariant and positive kernel $K$, we define the space
$$
\H^K_0(\Omega) := \setcond{w \in L^2(\Omega)}{w = 0 \quad \textrm{a.e. in } \R^{2m} \setminus \Omega \quad \textrm{ and } [w]^2_{\H^K(\R^{2m})} < + \infty},
$$
where
\begin{equation}
\label{Eq:SeminormHK}
[w]^2_{\H^K(\R^{2m})} := \dfrac{1}{2}\int_{\R^{2m}} \int_{\R^{2m}} |w(x) - w(y)|^2 K(x-y) \d x \d y\,.
\end{equation}
Recall also that when $K$ satisfies the ellipticity assumption \eqref{Eq:Ellipticity}, then $\H^K_0 (\Omega) = \H^\s_0 (\Omega)$, which is the space associated to the kernel of the fractional Laplacian, $K(y) = c_{n,\s}|y|^{-n-2\s}$. We also define, for $\Omega$ doubly radial and symmetric with respect to $\ccal$, the space
$$
\widetilde{\H}^K_{0, \, \mathrm{odd}}(\Omega) := \setcond{w \in \H^K_0(\Omega)}{w \textrm{ is doubly radial a.e. and odd with respect to } \ccal}.
$$
Recall that when $K$ is radially symmetric and $w$ is doubly radial, we can replace the kernel $K(x-y)$ in the definition \eqref{Eq:SeminormHK} by the kernel $\overline{K}(x,y)$. This is readily deduced after a change of variables and taking the mean among all $R\in O(m)^2$ (see the details in Section~3 of \cite{FelipeSanz-Perela:IntegroDifferentialI}).

\begin{lemma}
	\label{Lemma:FirstOddEigenfunction}
	Let $\Omega\subset \R^{2m} $ be a bounded set of double revolution and let  $K$ be a radially symmetric kernel satisfying the positivity condition \eqref{Eq:KernelInequality} and such that $L_K\in \lcal_0(2m, \s, \lambda, \Lambda)$. Let us define 
	$$
	\lambda_{1, \, \mathrm{odd}}(\Omega, L_K) := \inf_{w \in \widetilde{\H}^K_{0, \, \mathrm{odd}}(\Omega)} \dfrac{\dfrac{1}{2}  \ds\int_{\R^{2m}} \int_{\R^{2m}} |w(x) - w(y)|^2 \overline{K}(x,y) \d x \d y}{ \ds \int_\Omega w(x)^2 \d x}\,.
	$$
	
	Then, such infimum is attained at a function $\phi_1\in \widetilde{\H}^K_{0, \, \mathrm{odd}}(\Omega)\cap L^\infty(\Omega)$ which solves
	$$
	\beqc{\PDEsystem}
	L_K \phi_1 &=& \lambda_{1, \, \mathrm{odd}}(\Omega, L_K) \phi_1 & \textrm{ in } \Omega\,,\\
	\phi_1 & = & 0 & \textrm{ in } \R^{2m}\setminus \Omega\,,
	\eeqc
	$$
	and satisfies that $\phi_1 > 0$ in $\Omega \cap \ocal$.	We call this function $\phi_1$ the \emph{first odd eigenfunction of $L_K$ in $\Omega$}, and $\lambda_{1, \, \mathrm{odd}}(\Omega, L_K) $, the \emph{first odd eigenvalue}. 
	
	Moreover, in the case $\Omega = B_R$, there exists a constant $C$ depending only on $n$, $\s$, and $\Lambda$, such that
	$$
	\lambda_{1, \, \mathrm{odd}}(B_R, L_K) \leq C R^{-2\s}\,. 
	$$ 
\end{lemma}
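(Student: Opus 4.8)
The plan is to produce $\phi_1$ by the direct method in the calculus of variations, to identify the Euler--Lagrange equation it solves, to deduce its positivity from the strong maximum principle for odd functions (Proposition~\ref{Prop:MaximumPrincipleForOddFunctions}), and to obtain the bound on $\lambda_{1,\,\mathrm{odd}}(B_R,L_K)$ from a scaling test function. \textbf{Existence of a minimizer.} Since $L_K\in\lcal_0$, the seminorm $[\cdot]_{\H^K(\R^{2m})}$ is comparable to the Gagliardo seminorm $[\cdot]_{\H^\s(\R^{2m})}$, so $\widetilde{\H}^K_{0,\,\mathrm{odd}}(\Omega)$ is, with equivalent norms, the subspace of doubly radial odd functions of $\H^\s_0(\Omega)$. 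I would take a minimizing sequence $\{w_k\}$ with $\|w_k\|_{L^2(\Omega)}=1$; then $[w_k]_{\H^\s}$ is bounded, so after extracting a subsequence $w_k\rightharpoonup\phi_1$ in $\H^\s_0(\Omega)$ and, by the compact embedding $\H^\s_0(\Omega)\hookrightarrow\hookrightarrow L^2(\Omega)$ (valid since $\Omega$ is bounded), $w_k\to\phi_1$ in $L^2(\Omega)$, whence $\|\phi_1\|_{L^2(\Omega)}=1$. The properties ``doubly radial'', ``odd with respect to $\ccal$'' and ``$\equiv 0$ in $\R^{2m}\setminus\Omega$'' pass to the $L^2$-limit, so $\phi_1\in\widetilde{\H}^K_{0,\,\mathrm{odd}}(\Omega)$, and by weak lower semicontinuity of the (convex) seminorm $\phi_1$ attains the infimum; moreover the fractional Poincar\'e inequality on bounded domains together with the lower bound in \eqref{Eq:Ellipticity} give $\lambda_{1,\,\mathrm{odd}}(\Omega,L_K)>0$.

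\textbf{Euler--Lagrange equation, boundedness and regularity.} The first variation of the Rayleigh quotient gives, for every $\psi\in\widetilde{\H}^K_{0,\,\mathrm{odd}}(\Omega)$,
$$\tfrac12\int_{\R^{2m}}\int_{\R^{2m}}\big(\phi_1(x)-\phi_1(y)\big)\big(\psi(x)-\psi(y)\big)\overline{K}(x,y)\d x\d y=\lambda_{1,\,\mathrm{odd}}(\Omega,L_K)\int_\Omega\phi_1\psi\d x.$$
To see that $\phi_1$ is a genuine weak solution of $L_K\phi_1=\lambda_{1,\,\mathrm{odd}}(\Omega,L_K)\phi_1$ in $\Omega$ with zero exterior datum, I would use the symmetrization $\psi^{\mathrm{sym}}(x):=\tfrac12\big(\average_{O(m)^2}\psi(Rx)\d R-\average_{O(m)^2}\psi(Rx^\star)\d R\big)$, which lies in $\widetilde{\H}^K_{0,\,\mathrm{odd}}(\Omega)$: since $K$ is radially symmetric and $\phi_1$ is doubly radial and odd, both sides of the displayed identity are unchanged when $\psi$ is replaced by $\psi^{\mathrm{sym}}$, so, $\psi^{\mathrm{sym}}$ being admissible, the identity holds for all $\psi\in C^\infty_c(\Omega)$. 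Boundedness $\phi_1\in L^\infty(\Omega)$ then follows by Moser iteration from this equation and the fractional Sobolev embedding (equivalently, from the De Giorgi--Nash--Moser theory for $\lcal_0$), and Proposition~\ref{Prop:InteriorRegularity} (together with Proposition~\ref{Prop:BoundaryRegularity} when $\Omega$ is smooth) yields the H\"older regularity needed below, so that $\phi_1$ is a classical solution in $\Omega$.

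\textbf{Positivity.} The key ingredient is the identity, valid for every doubly radial odd $w$ and obtained by splitting the domain of integration exactly as in \eqref{Eq:OperatorOddF},
$$[w]^2_{\H^K(\R^{2m})}=\int_\ocal\int_\ocal\big|w(x)-w(y)\big|^2\big(\overline{K}(x,y)-\overline{K}(x,y^\star)\big)\d x\d y+4\int_\ocal w(x)^2\Big(\int_\ocal\overline{K}(x,y^\star)\d y\Big)\d x.$$
Given a minimizer $\phi_1$, I would define $\phi_1^\sharp:=|\phi_1|$ in $\ocal$, $\phi_1^\sharp(x):=-\phi_1^\sharp(x^\star)$ in $\ical$ and $\phi_1^\sharp:=0$ on $\ccal$; this is doubly radial, odd with respect to $\ccal$, vanishes outside $\Omega$, and $\|\phi_1^\sharp\|_{L^2(\Omega)}=\|\phi_1\|_{L^2(\Omega)}$. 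Using $\big||a|-|b|\big|\leq|a-b|$ together with the positivity \eqref{Eq:KernelInequality} of $\overline{K}(x,y)-\overline{K}(x,y^\star)$ on $\ocal\times\ocal$, the first integral above does not increase when $w=\phi_1$ is replaced by $w=\phi_1^\sharp$, while the second is unchanged; hence $[\phi_1^\sharp]^2_{\H^K}\leq[\phi_1]^2_{\H^K}$, so $\phi_1^\sharp$ is again a minimizer and we may assume $\phi_1\geq0$ in $\ocal$. Since $\phi_1\not\equiv0$, $\phi_1\geq0$ in $\ocal$ and $L_K\phi_1-\lambda_{1,\,\mathrm{odd}}(\Omega,L_K)\phi_1=0$ in $\Omega$, the strong maximum principle for odd functions (Proposition~\ref{Prop:MaximumPrincipleForOddFunctions}(ii), applied with $c\equiv-\lambda_{1,\,\mathrm{odd}}(\Omega,L_K)$ and, when the regularity near $\partial\Omega$ is only $C^{\alpha_0}$, invoking Remark~\ref{Remark:MaxPrincipleSingularity}) forces $\phi_1>0$ in $\Omega\cap\ocal$.

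\textbf{Bound on $\lambda_{1,\,\mathrm{odd}}(B_R,L_K)$, and the main obstacle.} Fix once and for all a nonzero $g\in C^\infty_c(B_1)$ that is doubly radial and odd with respect to $\ccal$ (for instance $g(x)=\eta(|x|)\,(|x'|^2-|x''|^2)$ for a suitable radial cutoff $\eta$), and test the Rayleigh quotient on $B_R$ with $\psi_R(x):=g(x/R)\in\widetilde{\H}^K_{0,\,\mathrm{odd}}(B_R)$. By the upper bound in \eqref{Eq:Ellipticity} and the scaling of the Gagliardo seminorm, $[\psi_R]^2_{\H^K}\leq\Lambda[\psi_R]^2_{\H^\s}=\Lambda R^{2m-2\s}[g]^2_{\H^\s}$, whereas $\|\psi_R\|^2_{L^2(B_R)}=R^{2m}\|g\|^2_{L^2(B_1)}$; dividing gives $\lambda_{1,\,\mathrm{odd}}(B_R,L_K)\leq CR^{-2\s}$ with $C=\Lambda[g]^2_{\H^\s}/\|g\|^2_{L^2(B_1)}$, which depends only on $2m$, $\s$ and $\Lambda$. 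The step I expect to be most delicate is the positivity: it hinges on the exact seminorm decomposition above and on the kernel positivity \eqref{Eq:KernelInequality} (without which the replacement $w\mapsto w^\sharp$ would not decrease the energy), and then on knowing that $\phi_1$ has just enough H\"older regularity to invoke Proposition~\ref{Prop:MaximumPrincipleForOddFunctions}(ii) --- which is exactly the situation covered by Remark~\ref{Remark:MaxPrincipleSingularity}. Promoting the constrained Euler--Lagrange identity to a genuine weak formulation in the second step also requires some care with the symmetries of $\overline{K}$.
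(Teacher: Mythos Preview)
Your proposal is correct and follows essentially the same approach as the paper: the existence of the minimizer and the eigenvalue equation are obtained via the direct method (the paper delegates this to Proposition~9 of \cite{ServadeiValdinoci}), the nonnegativity in $\ocal$ via the energy-decreasing rearrangement $\phi_1\mapsto\phi_1^\sharp$ using \eqref{Eq:KernelInequality} (this is Lemma~3.4 of \cite{FelipeSanz-Perela:IntegroDifferentialI}, which the paper cites), strict positivity via Proposition~\ref{Prop:MaximumPrincipleForOddFunctions}(ii), and the decay of $\lambda_{1,\,\mathrm{odd}}(B_R,L_K)$ via the upper ellipticity bound plus scaling. The only cosmetic difference is that for the last point the paper rescales the full infimum and bounds it by $\Lambda\,\lambda_{1,\,\mathrm{odd}}(B_1,(-\Delta)^\s)\,R^{-2\s}$, whereas you test with a single rescaled function; both yield a constant depending only on $n$, $\s$, and $\Lambda$.
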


\begin{proof}
	The first two statements are deduced exactly as in Proposition~9 of \cite{ServadeiValdinoci}, using the same arguments as in  Lemma~3.4 of \cite{FelipeSanz-Perela:IntegroDifferentialI} to guarantee that $\phi_1$ is nonnegative in $\ocal$. The fact that $\phi_1 > 0$ in $\Omega \cap \ocal$ follows from the strong maximum principle (see Proposition~\ref{Prop:MaximumPrincipleForOddFunctions}).
	
	We show the third statement. Let $\widetilde{w} (x):= w(Rx)$ for every $w\in \widetilde{\H}^K_{0, \, \mathrm{odd}}(B_R)$. Then,
	\begin{align*}
	& \min_{w \in \widetilde{\H}^K_{0, \, \mathrm{odd}}(B_R)} \dfrac{\dfrac{1}{2}  \ds\int_{\R^{2m}} \int_{\R^{2m}} |w(x) - w(y)|^2 \overline{K}(x,y) \d x \d y}{ \ds \int_{B_R} w(x)^2 \d x} \quad \quad \quad \quad \quad \quad \quad \quad \quad \quad \quad \quad\\
	&   \quad \quad \quad \quad \quad \quad \leq \min_{\widetilde{w} \in \widetilde{\H}^K_{0, \, \mathrm{odd}}(B_1)} \dfrac{\dfrac{c_{n, \s}\Lambda}{2}  \ds\int_{\R^{2m}} \int_{\R^{2m}} |\widetilde{w}(x/R) - \widetilde{w}(y/R)|^2 |x - y|^{-n-2 \s}\d x \d y}{ \ds \int_{B_R} \widetilde{w}(x/R)^2 \d x}
	\\
	& \quad \quad \quad \quad \quad \quad = R^{-2 \s }\min_{\widetilde{w} \in \widetilde{\H}^s_{0, \, \mathrm{odd}}(B_1)} \dfrac{\dfrac{c_{n, \s}\Lambda}{2}  \ds\int_{\R^{2m}} \int_{\R^{2m}} |\widetilde{w}(x) - \widetilde{w}(y)|^2 |x - y|^{-n-2 \s}\d x \d y}{ \ds \int_{B_1} \widetilde{w}(x)^2 \d x}
	\\
	& \quad \quad \quad \quad \quad \quad = \lambda_{1, \, \mathrm{odd}}(B_1, \fraclaplacian) \Lambda R^{-2 \s } \,.
	\end{align*}
\end{proof}

\begin{remark}
	\label{Remark:CsRegularityFirstEigenfunction}
	Note that, by the regularity results for $L_K$ stated in Section~\ref{Sec:Preliminaries}, we have that $\phi_1 \in C^{\alpha_0}(\overline{\Omega})\cap C^{\alpha_0 + 2\s}(\Omega)$ for some $0<\alpha_0<\s$, and the regularity up to the boundary is optimal. Due to this and the fact that $\phi_1 >0$ in $\Omega\cap \ocal$ while $\phi_1=0$ in $\R^{2m}\setminus \Omega$, it is easy to check by using \eqref{Eq:OperatorOddF} that $-\infty <L_K \phi_1 < 0$ in $\ocal\setminus \overline{\Omega}$ and that $L_K \phi_1 = -\infty$ on $\partial \Omega \cap \ocal$.
\end{remark}

With these ingredients, we can proceed with the proof of  Theorem~\ref{Th:ExistenceUniqueness}.

\begin{proof}[Proof of Theorem~\ref{Th:ExistenceUniqueness}] We divide it into two parts.

\textbf{\textit{i}) Existence:}
	The strategy is to build a suitable solution $u_R$ of 
	\begin{equation}
	\label{Eq:ProofExistenceProblemBR}
	\beqc{\PDEsystem}
	L_K u_R &=& f(u_R) & \textrm{ in } B_R\,,\\
	u_R &=& 0 & \textrm{ in }\R^{2m} \setminus B_R\,,
	\eeqc
	\end{equation}
	and then let $R\to+ \infty$ to get a saddle-shaped solution.
	
	Let $\phi_1^{R_0}$ be the first odd eigenfunction of $L_K$ in $B_{R_0} \subset \R^{2m}$, given by Lemma~\ref{Lemma:FirstOddEigenfunction}, and let  $\lambda_1^{R_0} := \lambda_{1, \, \mathrm{odd}}(B_{R_0}, L_K)$. We claim that for $R_0$ big enough and $\varepsilon>0$ small enough, $\usub_R := \varepsilon\phi_1^{R_0} $ is an odd subsolution of \eqref{Eq:ProofExistenceProblemBR} for every $R\geq R_0$. To see this, note first that, without loss of generality, we can assume that $\norm{\phi_1^{R_0}}_{L^\infty(B_R)}=1$. Now, since $f$ is strictly concave in $(0,1)$ and $f(0)=0$, we have that $f'(\tau)\tau<f(\tau)$ for all $\tau>0$. Thus, using that $\varepsilon \phi_1^{R_0}>0$ in $B_{R_0}\cap \ocal$, it follows that for every $x\in B_{R_0}\cap \ocal$,
	$$
	\dfrac{f(\varepsilon \phi_1^{R_0}(x))}{\varepsilon \phi_1^{R_0}(x)} > f'(\varepsilon \phi_1^{R_0}(x)) \geq f'(0)/2
	$$
	if $\varepsilon$ is small enough, independently of $x$ (recall that we assumed $|\phi_1|\leq 1$). Therefore, since $f'(0)>0$, taking $R_0$ big enough so that $\lambda_1^{R_0} < f'(0)/2$ (this can be achieved thanks to the last statement of Lemma~\ref{Lemma:FirstOddEigenfunction}), we have that for every $x\in B_{R_0}\cap \ocal$,  $f(\varepsilon \phi_1^{R_0}(x)) > \lambda_1^{R_0}  \varepsilon \phi_1^{R_0}(x)$. Thus,
	$$
	L_K \usub_R = \lambda_1^{R_0} \varepsilon \phi_1^{R_0} < f(\varepsilon\phi_1^{R_0}) = f(\usub_R) \quad \textrm{ in } B_{R_0}\cap \ocal\,.
	$$
	In addition, if $x\in (B_R\setminus B_{R_0})\cap\ocal$, by Remark~\ref{Remark:CsRegularityFirstEigenfunction} we have that
	$$
	L_K \usub_R < 0 = f(0) =  f(\usub_R) \quad \textrm{ in } (B_R\setminus B_{R_0})\cap \ocal\,.
	$$
	Note that in $\partial B_{R_0}$ we have $L_K \usub_R = -\infty$. Hence, the claim is proved.
	
	Now, if we define $\usup_R := \chi_{\ocal \cap B_R} - \chi_{\ical \cap B_R}$, a simple computation shows that it is an odd supersolution to \eqref{Eq:ProofExistenceProblemBR}. Therefore, using the monotone iteration procedure given in Proposition~\ref{Prop:MonotoneIterationOdd} (taking into account Remarks~\ref{Remark:MaxPrincipleSingularity} and \ref{Remark:CsRegularityFirstEigenfunction} when using the maximum principle), we obtain a solution $u_R$ to \eqref{Eq:ProofExistenceProblemBR} such that it is doubly radial, odd with respect to the Simons cone, and $\varepsilon \phi_1^{R_0} = \usub_R \leq u_R \leq \usup_R$ in $\ocal$. Note that, since $\usub_R > 0$ in $\ocal \cap B_{R_0}$, the same holds for $u_R$.
	
	Using a standard compactness argument, we let $R\to +\infty$ to obtain a sequence $u_{R_j}$ converging on compacts in  $C^{2\s + \eta}(\R^{2m})$ norm, for some $\eta > 0$, to a solution $u \in C^{2\s + \eta}(\R^{2m})$ of $L_K u = f(u)$ in $\R^{2m}$. Note that $u$ is doubly radial, odd with respect to the Simons cone and $0\leq u \leq 1$ in $\ocal$. 
	Let us show that $0 < u < 1$ in $\ocal$, which will yield that $u$ is a saddle-shaped solution. 
	By the usual strong maximum principle it follows readily that $u<1$ in $\ocal$. 
	Moreover, since $u_R\geq\varepsilon \phi_1^{R_0}>0$ in  $\ocal \cap B_{R_0}$ for $R>R_0$, this holds also the limit, that is, $u\geq\varepsilon \phi_1^{R_0}>0$ in  $\ocal \cap B_{R_0}$. 
	Therefore, by applying the strong maximum principle for odd functions (see Proposition~\ref{Prop:MaximumPrincipleForOddFunctions}) we obtain that $0 < u < 1$ in $\ocal$.
	
	\medskip
	
	\textbf{\textit{ii}) Uniqueness:}
	Let $u_1$ and $u_2$ be two saddle-shaped solutions. Define $v := u_1 - u_2$, which is a doubly radial function that is odd with respect to $\ccal$. Then,
	$$
	L_K v = f(u_1) - f(u_2) \leq f'(u_2) (u_1 - u_2) = f'(u_2) v \quad \textrm{ in } \ocal\,,
	$$
	since $f$ is concave in $(0,1)$. Moreover, by the asymptotic result (see Theorem~\ref{Th:AsymptoticBehaviorSaddleSolution}), we have
	$$
	\limsup_{x\in \ocal, \ |x|\to \infty} v(x) = 0\,.
	$$
	Then, by the maximum principle in $\ocal$ for the linearized operator $L_K  - f'(u_2)$ (see Proposition~\ref{Prop:MaximumPrincipleLinearized}), it follows that $v \leq 0$ in $\ocal$, which means $u_1 \leq u_2$ in $\ocal$. Repeating the  argument with $-v = u_2 - u_1$ we deduce $u_1 \geq u_2$ in $\ocal$. Therefore, $u_1 = u_2$ in $\R^{2m}$.	
\end{proof}

\begin{remark}
	\label{Remark:uStableinO}
	Since the saddle-shaped solution $u$ is positive in $\ocal$, it follows that $u$ is stable in this set, as explained in Section~\ref{Sec:Preliminaries}. 
	This fact will be used in Section~\ref{Sec:Asymptotic}.
\end{remark}

\section{Symmetry and Liouville type results}
\label{Sec:SymmetryResults}

This section is devoted to prove the Liouville type result of Theorem~\ref{Th:LiouvilleSemilinearWholeSpace} and the one-dimensional symmetry result of Theorem~\ref{Th:SymmHalfSpace}. Both of them will be needed in the following section to establish the asymptotic behavior of the saddle-shaped solution. 

\subsection{A Liouville type result for positive solutions in the whole space}

In the proof of Theorem~\ref{Th:LiouvilleSemilinearWholeSpace} we will need two main ingredients, that we present next. The first one is a Harnack inequality for solutions to the semilinear problem \eqref{Eq:PositiveWholeSpace}. This inequality follows readily from the results of Cozzi in \cite{Cozzi-DeGiorgiClassesLong}, although the precise result that we need is not stated there. For the reader's convenience and for future reference, we present the result here and indicate how to deduce it from the results in \cite{Cozzi-DeGiorgiClassesLong}.

\begin{proposition}
	\label{Prop:HarnackSemilinear}
	Let $L_K\in\mathcal{L}_0(n,\s,\lambda,\Lambda)$ and let $w$ be a solution to \eqref{Eq:PositiveWholeSpace} with $f$ a Lipschitz nonlinearity such that $f(0) = 0$. Then, for every $x_0 \in \R^n$ and every $R>0$, it holds
	$$
	\sup_{B_R(x_0)} w \leq C  \inf_{B_R(x_0)} w, 
	$$
	with $C>0$  depending only on $n,\s,\lambda,\Lambda$, and $R$.
\end{proposition}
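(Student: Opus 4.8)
The plan is to deduce this Harnack inequality from the interior Harnack inequality for functions in the De Giorgi class associated to $L_K$, as developed by Cozzi in \cite{Cozzi-DeGiorgiClassesLong}. The key observation is that a bounded solution $w$ to $L_K w = f(w)$ with $f$ Lipschitz and $f(0)=0$ satisfies $|f(w)| = |f(w)-f(0)| \leq \mathrm{Lip}(f)\, |w| = \mathrm{Lip}(f)\, w$ (using $w\geq 0$), so $w$ is a nonnegative solution of a linear equation $L_K w = c(x) w$ with a bounded coefficient $c$. Such solutions belong to the nonlocal De Giorgi class, and Cozzi's results provide a Harnack inequality on balls $B_r$ with a constant depending only on the ellipticity parameters, the dimension, the radius $r$, and the $L^\infty$ norm of the lower-order coefficient.

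First I would recall precisely which theorem in \cite{Cozzi-DeGiorgiClassesLong} is being invoked: there, the Harnack inequality is stated for a single ball of fixed (say unit) radius, for nonnegative functions in the De Giorgi class $DG$, and the dependence of the constant is on $n,\s,\lambda,\Lambda$ and the structural constants entering the class. So the step is to verify that $w$ restricted to a ball $B_{2R}(x_0)$ — together with the global bound $\|w\|_{L^\infty(\R^n)}$, which is needed because the nonlocal tail enters the De Giorgi class estimates — lies in the relevant De Giorgi class with structural constants controlled by $\mathrm{Lip}(f)$, $R$, and $\|w\|_{L^\infty}$. Then I would apply Cozzi's Harnack inequality on $B_R(x_0)$, possibly after a covering/chaining argument to pass from the small-radius statement to an arbitrary radius $R$, which only affects the constant.

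Second, I would address the role of $R$ in the constant. For the local Laplacian the Harnack constant on $B_R$ is scale-invariant, but for a general $L_K$ in $\lcal_0$ which is not scale invariant, rescaling $B_R$ to $B_1$ changes the kernel and introduces a factor like $R^{2\s}$ in front of the zeroth-order term $c(x)w$; equivalently the rescaled equation has coefficient $R^{2\s} c(Rx)$, whose norm is $R^{2\s}\mathrm{Lip}(f)$. This is why the final constant $C$ is allowed to depend on $R$ in addition to $n,\s,\lambda,\Lambda$ (and, implicitly through $c$, on $\mathrm{Lip}(f)$ and $\|w\|_{L^\infty}$, which are fixed once $f$ and the class of solutions are fixed). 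I would make this dependence explicit in the write-up.

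The main obstacle, such as it is, is bookkeeping rather than conceptual: one must check that the precise hypotheses of the De Giorgi-class Harnack inequality in \cite{Cozzi-DeGiorgiClassesLong} — in particular whether it is stated for solutions of equations with a right-hand side/zeroth-order term, or only for $L_K$-harmonic functions, and what role the global $L^\infty$ bound (the nonlocal tail term) plays — match what we have here, and to track how the constant depends on $R$ through the non-scale-invariance of $L_K$. If Cozzi's statement is only for homogeneous solutions, the extra term $c(x)w$ with $c\in L^\infty$ is absorbed into the De Giorgi class inequalities in the standard way (it contributes a lower-order term of the right homogeneity), so no genuine difficulty arises; I would simply cite the relevant inequalities and indicate this absorption, exactly as the statement above promises (``we present the result here and indicate how to deduce it'').
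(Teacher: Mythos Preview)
Your approach is essentially the same as the paper's: use the Lipschitz condition and $f(0)=0$ to get $|f(w)|\le \mathrm{Lip}(f)\,|w|$, so that $w$ fits the structural hypotheses of Cozzi's framework (the paper takes $d_1=0$, $d_2=\|f\|_{\mathrm{Lip}}$, $q=2$ in the growth condition $|f(u)|\le d_1+d_2|u|^{q-1}$), deduce membership in the fractional De~Giorgi class $\mathrm{DG}^{\s,2}(\R^n,0,H,-\infty,2\s/n,2\s,+\infty)$ via Proposition~8.5 of \cite{Cozzi-DeGiorgiClassesLong}, and then invoke the Harnack inequality of Theorem~6.9 there. The only difference is bookkeeping: the paper cites the precise proposition and theorem numbers rather than discussing how to absorb the zeroth-order term.
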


\begin{proof}
	Following the notation of \cite{Cozzi-DeGiorgiClassesLong}, since $f$ is Lipschitz and $f(0) = 0$, we have
	$$
	|f(u)|\leq d_1 + d_2 |u|^{q-1} \quad \text{ in } \R^n\,,
	$$ 
	with $d_1=0$, $d_2 =\norm{f}_{\mathrm{Lip}}$ and $q=2$. With this choice of the parameters, we only need to repeat the proof of Proposition~8.5 in \cite{Cozzi-DeGiorgiClassesLong} (with $p=2$ and $\Omega = \R^n$) in order to obtain that $u$ belongs to the fractional De Giorgi class $\mathrm{DG}^{\s,2} ( \R^n , 0, H, -\infty,2\s/n,2\s,+\infty)$	for some constant $H>0$ (see \cite{Cozzi-DeGiorgiClassesLong} for the precise definition of these classes). Therefore, the Harnack inequality follows from Theorem~6.9 in \cite{Cozzi-DeGiorgiClassesLong}.
\end{proof}

The second ingredient that we need in the proof of Theorem~\ref{Th:LiouvilleSemilinearWholeSpace} is the following parabolic maximum principle in the unbounded set $\R^n \times (0,+\infty)$. 

\begin{proposition}
	\label{Prop:ParaMaxPrp}
	Let $L_K \in \lcal_0(n,\s,\lambda, \Lambda)$ and let $v$ be a bounded function, $C^\alpha$ with $\alpha > 2\s$ in space and $C^1$ in time, such that
	\begin{equation*}
	\beqc{\PDEsystem}
	\partial_t v + L_K  v + c(x)\,v &\leq& 0 & \textrm{ in }\R^n\times(0,+\infty)\,,\\
	v(x,0) &\leq& 0 & \textrm{ in } \R^n\,,
	\eeqc
	\end{equation*}
	with $c(x)$ a continuous and bounded function. Then,
	$$ v(x,t) \leq 0 \,\,\,\,\,\text{ in } \,\,\, \R^n\times[0,+\infty). $$
\end{proposition}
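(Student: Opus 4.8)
The plan is to argue by contradiction with a sliding–barrier argument adapted to the unbounded spatial domain: the point is that boundedness of $v$, together with a coercive barrier, confines the maximum–point analysis to a compact set. A preliminary reduction: since $c$ is bounded, the substitution $w:=e^{-\|c\|_{L^\infty}t}v$ gives
$$
\partial_t w + L_K w + \bigl(c+\|c\|_{L^\infty}\bigr)w \leq 0 \quad\text{in }\R^n\times(0,+\infty),\qquad w(\cdot,0)=v(\cdot,0)\leq 0,
$$
and $w$ inherits the boundedness and the regularity of $v$, while $v=e^{\|c\|_{L^\infty}t}w$. Hence we may assume from now on that $c\geq 0$.

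Next I would set up a coercive barrier. Fix $\beta\in\bigl(0,\min\{2\s,1\}\bigr)$ and let $\psi(x):=(1+|x|^2)^{\beta/2}$, so that $\psi\geq 1$, $\psi(x)\to+\infty$ as $|x|\to\infty$, $\psi\in L^1_\s(\R^n)$, and $\psi$ is globally $C^{1,1}$ with bounded first and second derivatives (here $\beta<1$ enters). The key estimate is
$$
|L_K\psi(x)|\leq C_\psi\qquad\text{for every }x\in\R^n,
$$
with $C_\psi$ depending only on $n,\s,\lambda,\Lambda$. I would prove it by the usual near/far decomposition of $L_K\psi$: on $\{|x-y|\leq 1\}$ one uses a second–order Taylor expansion of $\psi$ (the first–order part cancelling by the symmetry \eqref{Eq:ParityK} of $K$) together with $\int_{|z|\leq 1}|z|^2K(z)\,\d z\leq C\Lambda$; on $\{|x-y|\geq 1\}$ one splits further according to the size of $|x-y|$ relative to $|x|$, estimating $|\psi(x)-\psi(y)|$ by $C|x|^{\beta-1}|x-y|$ where the segment stays away from the origin and by $\psi(x)+\psi(y)\leq C|x-y|^\beta$ otherwise, and concludes via $\int_{|z|\geq 1}|z|^\beta K(z)\,\d z<\infty$ — which holds precisely because $\beta<2\s$.

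With the barrier in hand I would run the maximum–point argument. Suppose, for contradiction, that $v(x_1,t_1)>0$ for some $(x_1,t_1)$, fix $T>t_1$, put $\kappa:=C_\psi+1$, and for $\varepsilon>0$ define
$$
v_\varepsilon(x,t):=v(x,t)-\varepsilon\,\psi(x)-\kappa\,\varepsilon\,t.
$$
Since $v$ is bounded and $\psi$ is coercive, $v_\varepsilon(x,t)\to-\infty$ as $|x|\to\infty$ uniformly for $t\in[0,T]$, so $v_\varepsilon$ attains $\sup_{\R^n\times[0,T]}v_\varepsilon$ at some $(x_0,t_0)$. If $\varepsilon$ is small enough that $\varepsilon\psi(x_1)+\kappa\varepsilon t_1<v(x_1,t_1)$, this supremum is positive; since $v_\varepsilon(\cdot,0)\leq v(\cdot,0)\leq 0$ we get $t_0\in(0,T]$ and $v(x_0,t_0)=v_\varepsilon(x_0,t_0)+\varepsilon\psi(x_0)+\kappa\varepsilon t_0>0$. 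At $(x_0,t_0)$: $\partial_t v_\varepsilon(x_0,t_0)\geq 0$ (maximum in $t$ at an interior point or at $t_0=T$), so $\partial_t v(x_0,t_0)\geq\kappa\varepsilon$; $L_K v_\varepsilon(x_0,t_0)\geq 0$ because $y\mapsto v_\varepsilon(y,t_0)$ has a global maximum at $x_0$ and $K$ is positive by \eqref{Eq:Ellipticity}, hence $L_K v(x_0,t_0)\geq\varepsilon L_K\psi(x_0)\geq-\varepsilon C_\psi$, all terms being finite by the regularity of $v$ and the barrier estimate; and $c(x_0)v(x_0,t_0)\geq 0$ since $c\geq 0$. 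Adding,
$$
\partial_t v(x_0,t_0)+L_K v(x_0,t_0)+c(x_0)v(x_0,t_0)\geq\kappa\varepsilon-\varepsilon C_\psi=\varepsilon>0,
$$
contradicting the differential inequality at $(x_0,t_0)\in\R^n\times(0,+\infty)$. Therefore $v_\varepsilon\leq 0$ on $\R^n\times[0,T]$, i.e.\ $v(x,t)\leq\varepsilon\psi(x)+\kappa\varepsilon t$; letting $\varepsilon\to 0$ and then $T\to+\infty$ yields $v\leq 0$ in $\R^n\times[0,+\infty)$.

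The main obstacle is the barrier bound $|L_K\psi|\leq C_\psi$: because $K$ is a general kernel of the class $\lcal_0$ (with no explicit formula, unlike $\fraclaplacian$), the near/far estimates must rely solely on the two–sided bound \eqref{Eq:Ellipticity}, and one must be careful to choose the growth exponent $\beta$ strictly below $2\s$ so that the tail integral converges. Everything else is the classical parabolic maximum–point scheme, which goes through on the unbounded domain precisely because $v$ is bounded while the barrier is coercive.
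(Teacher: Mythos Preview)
Your argument is correct, and the barrier estimate $|L_K\psi|\leq C_\psi$ does go through with the choice $\beta<\min\{2\s,1\}$: the near part uses only $\|D^2\psi\|_{L^\infty}<\infty$, while the far part uses the $\beta$-growth of $\psi$ together with $\beta<2\s$ exactly as you outline.

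Your route, however, is genuinely different from the paper's. The paper also begins with the exponential substitution to remove the zero-order term, but then, instead of an explicit growing barrier, it constructs (Lemma~\ref{Lemma:SolBallToZero}) a family $\psi_R$ solving $L_K\psi_R=-1/M_R$ in $B_R$, $\psi_R=1$ outside $B_R$, with $\psi_R\to 0$ pointwise and $M_R\to\infty$. The existence of these barriers relies on the Liouville theorem for $L_K$ (Lemma~\ref{Lemma:NoBddSolL=1}: there is no bounded solution of $L_Kv=1$ in $\R^n$). One then compares $\tilde v$ with $w_R(x,t)=\|\tilde v\|_{L^\infty}(\psi_R(x)+t/M_R)$ on the bounded cylinder $B_R\times(0,T)$ using the elementary parabolic maximum principle (Lemma~\ref{Lemma:ParabolicmaxPrpBdd}), and lets $R\to\infty$.

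What each approach buys: yours is more direct and entirely self-contained once the estimate $|L_K\psi|\leq C_\psi$ is in hand; it avoids the two auxiliary lemmas and the Liouville theorem. The paper's approach avoids any explicit computation of $L_K$ on a concrete function --- potentially delicate since $K$ is only known through the two-sided bound~\eqref{Eq:Ellipticity} --- and instead uses soft qualitative tools (existence for the Dirichlet problem, maximum principle, and Liouville), which are already available in this framework. Both reduce the unbounded-domain problem to a situation where a maximum is actually attained: you via coercivity of $\psi$, the paper via working in $B_R$ and passing to the limit.
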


This result can be deduced from the usual parabolic maximum principle in a bounded (in space and time) set with a rather simple argument. Since we have not found a specific reference where such result is stated, let us present its proof with full detail for the sake of clarity. First of all, we present the usual parabolic maximum principle in a bounded set in $\R^n \times (0,+\infty)$. The proof for cylindrical sets $\Omega \times (0,T)$ can be found for instance in \cite{BarriosPeralSoriaValdinoci}. Although the argument for general bounded sets is essentially the same,  we include here a short proof for the sake of completeness.

\begin{lemma}
	\label{Lemma:ParabolicmaxPrpBdd}
	Let $\Omega  \subset B_R\times(0,T) \subset \R^n \times (0,+\infty)$ be a bounded open set.
	Let $L_K$ be an integro-differential operator of the form \eqref{Eq:DefOfLu} with a symmetric kernel satisfying \eqref{Eq:Ellipticity}, and let $v$ be a bounded function, $C^\alpha$ with $\alpha > 2\s$ in space and $C^1$ in time, satisfying
	\begin{equation*}
	\beqc{\PDEsystem}
	\partial_t v + L_K v &\leq& 0 & \textrm{ in } \Omega \subset B_R\times(0,T)\,,\\
	v(x,0) &\leq& 0 & \textrm{ in } \overline{\Omega} \cap \{t=0\} \subset B_R\,,\\
	v &\leq& 0 & \textrm{ in } ( \R^n \times (0,T))\setminus \Omega \,.
	\eeqc
	\end{equation*}
	Then, $v\leq 0$ in $\R^n\times [0,T]$.
\end{lemma}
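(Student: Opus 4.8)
The plan is to run the classical proof of the parabolic maximum principle, with two adjustments dictated by the setting: the interior--maximum step has to be adapted to the nonlocal operator $L_K$, and the final time $t=T$ --- which is \emph{not} covered by the exterior hypothesis $v\le 0$ in $(\R^n\times(0,T))\setminus\Omega$ --- has to be reached by an approximation. First I would perturb the function: for $\varepsilon>0$ set $v_\varepsilon(x,t):=v(x,t)-\varepsilon t$. Since $L_K$ annihilates functions depending only on $t$, $v_\varepsilon$ satisfies $\partial_t v_\varepsilon + L_K v_\varepsilon \le -\varepsilon<0$ in $\Omega$, while it still obeys $v_\varepsilon\le v\le 0$ on $(\R^n\times(0,T))\setminus\Omega$ and $v_\varepsilon(\cdot,0)=v(\cdot,0)\le 0$ on $\overline{\Omega}\cap\{t=0\}$. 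Note that $L_K v_\varepsilon=L_K v$ is finite at every point because $v$ is bounded and $C^\alpha$ in space with $\alpha>2\s$, and that $v_\varepsilon$ is continuous since $v$ is also $C^1$ in time.

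Next, I fix $\tau\in(0,T)$ and work on the compact set $\overline{\Omega}\cap\{t\le\tau\}$, where $v_\varepsilon$ attains a maximum value $M$; I argue by contradiction assuming $M>0$. Let $t^\ast\le\tau$ be the smallest time at which $M$ is attained (the set of maximisers is compact, so its time--projection has a minimum), and pick $x_0$ with $v_\varepsilon(x_0,t^\ast)=M$. The value $M>0$ rules out $t^\ast=0$, since $v_\varepsilon(\cdot,0)\le 0$; it also rules out $(x_0,t^\ast)\in\partial\Omega$ with $0<t^\ast<T$, because then $(x_0,t^\ast)\in(\R^n\times(0,T))\setminus\Omega$ and $v_\varepsilon\le 0$ there. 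Hence $(x_0,t^\ast)\in\Omega$. At this point two facts hold: on the one hand, $t^\ast$ being the first time the maximum is reached forces $v_\varepsilon(x_0,t^\ast-h)<M$ for small $h>0$, so $\partial_t v_\varepsilon(x_0,t^\ast)\ge 0$ (the derivative exists by the $C^1$--in--time assumption); on the other hand $v_\varepsilon(\cdot,t^\ast)\le M$ on $\overline{\Omega}$, while off $\overline{\Omega}$ one has $v_\varepsilon(\cdot,t^\ast)=v(\cdot,t^\ast)-\varepsilon t^\ast\le 0<M$ because $t^\ast\in(0,T)$, so $v_\varepsilon(\cdot,t^\ast)$ attains a \emph{global} spatial maximum at $x_0$ and, since $K>0$ by \eqref{Eq:Ellipticity}, $L_K v_\varepsilon(x_0,t^\ast)\ge 0$. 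Adding the two inequalities, $\partial_t v_\varepsilon+L_K v_\varepsilon\ge 0$ at $(x_0,t^\ast)\in\Omega$, which contradicts the strict inequality from the first paragraph. Therefore $M\le 0$.

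Consequently $v\le\varepsilon t\le\varepsilon\tau$ on $\overline{\Omega}\cap\{t\le\tau\}$; together with $v\le 0$ off $\Omega$ and $v(\cdot,0)\le 0$ (which holds on all of $\R^n$ by continuity, since points outside $\overline{\Omega}$ have $v\le 0$ at small positive times), this gives $v\le\varepsilon\tau$ on $\R^n\times[0,\tau]$. Letting $\varepsilon\to 0$ yields $v\le 0$ on $\R^n\times[0,\tau]$, and then $\tau\to T^-$ together with the continuity of $v$ gives $v\le 0$ on $\R^n\times[0,T]$, which is the statement of Lemma~\ref{Lemma:ParabolicmaxPrpBdd}.

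The only genuinely non-classical point, and the one I would expect to be the main obstacle, is the evaluation $L_K v_\varepsilon(x_0,t^\ast)\ge 0$: it requires $x_0$ to maximise $v_\varepsilon(\cdot,t^\ast)$ over \emph{all} of $\R^n$, not merely over the compact set where we produced the maximiser, and this is exactly where the sign condition $v\le 0$ on the exterior $(\R^n\times(0,T))\setminus\Omega$ is used in an essential way (it has no counterpart in the purely local argument). The other, milder, technical nuisance is the final time $t=T$, which is not covered by the exterior data; this is resolved by truncating to the slabs $\{t\le\tau\}$ with $\tau<T$ and passing to the limit $\tau\to T$.
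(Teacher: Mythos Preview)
Your proof is correct and follows essentially the same approach as the paper: argue by contradiction, truncate time below $T$ to avoid the boundary-in-time issue, locate a space-time maximiser inside $\Omega$, and derive a contradiction from $\partial_t v\ge 0$ and $L_K v\ge 0$ there. The only minor difference is how strictness is manufactured: you perturb $v$ to $v-\varepsilon t$ so that the differential inequality becomes strict, whereas the paper keeps $v$ unperturbed and instead observes that at a positive global maximum one has $L_K v(x_0,t_0)>0$ \emph{strictly}, because $v\le 0$ on the (nonempty, positive-measure) complement of the spatial section of $\Omega$ and $K>0$. Both devices are standard; the paper's shortcut exploits the nonlocal structure a bit more directly and avoids the double limit $\varepsilon\to 0$, $\tau\to T$, but your textbook perturbation works equally well.
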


\begin{proof}
	By contradiction, for every small $\varepsilon > 0$ assume that 
	$$
	M:=\sup_{\R^n \times (0,T-\varepsilon)}v > 0.
	$$
	By the sign of the initial condition and since $v \leq 0 $ in $(\R^n \times (0,T))\setminus \Omega$, $v$  attains this positive value $M$ at a point $(x_0,t_0) \in \Omega$ with $t_0\leq T-\varepsilon$. If $t_0\in(0,T-\varepsilon)$, then $(x_0,t_0)$ is an interior global maximum (in $\R^n \times (0,T-\varepsilon)$) and it must satisfy $v_t(x_0,t_0)=0$ and $L_K v(x_0,t_0)>0$, which contradicts the equation. If $t_0 = T-\varepsilon$, then $v_t(x_0,t_0)\geq 0$ and $L_K v(x_0,t_0)>0$, which is also a contradiction with the equation. Thus, $v\leq 0$ in $\R^n\times [0,T-\varepsilon)$ and since this holds for $\varepsilon>0$ arbitrarily small, we deduce $v\leq 0$ in $\R^n\times [0,T)$, and by continuity, in $\R^n\times [0,T]$.
\end{proof}

To establish Proposition~\ref{Prop:ParaMaxPrp} from Lemma~\ref{Lemma:ParabolicmaxPrpBdd}, we need to introduce an auxiliary function enjoying certain properties (see Lemma~\ref{Lemma:SolBallToZero} below). Before presenting it, we need the following result.

\begin{lemma}
	\label{Lemma:NoBddSolL=1}
	There is no bounded solution to $L_K v=1$ in $\R^n$ for any $L_K \in \lcal_0$.
\end{lemma}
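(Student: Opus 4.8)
The plan is to argue by contradiction, exploiting the fact that an operator in $\lcal_0$ has a kernel controlled above and below by that of the fractional Laplacian, so that a bounded solution of $L_K v = 1$ would have to grow in a way that is incompatible with its boundedness. Suppose $v$ is bounded with $L_K v = 1$ in $\R^n$. The key quantitative fact is that $L_K v(x) = 1$ forces the weighted average of $v$ around $x$ to lie strictly below $v(x)$ by a fixed amount, uniformly in $x$: indeed, using only the upper ellipticity bound $K(z) \le \Lambda c_{n,\s}|z|^{-n-2\s}$,
$$
1 = L_K v(x) = \int_{\R^n} \big(v(x) - v(y)\big) K(x-y)\d y \le \Lambda \int_{\R^n} \big(v(x) - v(y)\big) \frac{c_{n,\s}}{|x-y|^{n+2\s}}\d y = \Lambda\, \fraclaplacian v(x),
$$
provided $v$ is regular enough for both operators to make sense. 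By the interior regularity estimate \eqref{Eq:C2sEstimate} (applied on unit balls centered at arbitrary points, as is done right after Proposition~\ref{Prop:InteriorRegularity}), $v \in C^{2\s}(\R^n)$ with norm controlled by $\norm{v}_{L^\infty}$; and since the right-hand side $h \equiv 1$ is smooth, iterating \eqref{Eq:Calpha->Calpha+2sEstimate} gives $v \in C^\alpha(\R^n)$ for some $\alpha > 2\s$, so $\fraclaplacian v$ is well-defined and finite everywhere. Hence $\fraclaplacian v \ge 1/\Lambda > 0$ in all of $\R^n$.

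Now I would derive a contradiction from $\fraclaplacian v \ge c_0 > 0$ in $\R^n$ with $v$ bounded. The cleanest route is a direct computation testing against the explicit fundamental-type solution or, more elementarily, a barrier/rescaling argument: on a large ball $B_R$, compare $v$ with $w_R(x) = v(x) + \tfrac{c_0}{2n\Lambda_0}(R^2 - |x|^2)$-type profiles adapted to the fractional Laplacian — concretely, it is classical that $\fraclaplacian(R^{2\s} - |x|^{2\s})_+^{?}$ type functions, or simply the solution $\psi_R$ of $\fraclaplacian \psi_R = c_0$ in $B_R$, $\psi_R = 0$ outside, satisfies $\psi_R(0) \to +\infty$ as $R \to \infty$ (its value grows like $R^{2\s}$). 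Applying the maximum principle for $\fraclaplacian$ to $v - \psi_R$ on $B_R$ (using $\fraclaplacian(v - \psi_R) = \fraclaplacian v - c_0 \ge 0$ in $B_R$ and $v - \psi_R = v \le \norm{v}_{L^\infty}$ outside $B_R$) gives $v(0) \ge \psi_R(0) - \norm{v}_{L^\infty} \to +\infty$, contradicting the boundedness of $v$. Alternatively one can phrase the whole thing directly for $L_K$ using the weak maximum principle in the class $\lcal_0$ together with a subsolution of $L_K$ built from $|x|^{2\s}$-type profiles, avoiding passage to $\fraclaplacian$ altogether; either way the mechanism is the same.

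The main obstacle — and the only genuinely delicate point — is producing the growing barrier with the correct sign under $L_K$ rather than under $\fraclaplacian$, since the ellipticity bounds give a one-sided comparison between the two operators and one must be careful which direction is needed. The reduction $L_K v = 1 \Rightarrow \fraclaplacian v \ge 1/\Lambda$ uses the upper bound on $K$ and the fact that $v(x) - v(y)$ is integrated against a positive-then-mixed-sign quantity, so I would double-check the sign by writing $L_K v(x) = \int (v(x)-v(y))K(x-y)\d y$ and splitting, or simply invoke that for $v \in C^\alpha \cap L^\infty$ with $\alpha>2\s$ the pointwise identity $L_K v(x) = \int(v(x)-v(y))K(x-y)\d y$ holds as an absolutely convergent integral, so that $\Lambda \fraclaplacian v - L_K v = \int (v(x)-v(y))\big(\Lambda c_{n,\s}|x-y|^{-n-2\s} - K(x-y)\big)\d y$ has an integrand that need not have a fixed sign — hence the cleaner argument is actually to run the barrier comparison \emph{directly} for $L_K$: take $\phi_R$ the solution of $L_K \phi_R = c_0$ in $B_R$ with zero exterior data (which exists and is bounded by the theory in $\lcal_0$), note $\phi_R(0) \ge C^{-1} R^{2\s}$ by the lower ellipticity bound via a subsolution $c(R^{2\s} - |x|^{2\s})_+$, and apply the weak maximum principle to $v - \phi_R$ to conclude $v(0) \ge \phi_R(0) - \norm{v}_{L^\infty} \to \infty$. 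This is the version I would write up.
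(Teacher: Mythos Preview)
Your approach is genuinely different from the paper's, and the overall strategy---compare $v$ with the torsion function $\phi_R$ on $B_R$ and let $R\to\infty$---is sound. However, the execution has a real gap at the point you flag as ``the only genuinely delicate point'': the explicit subsolution you propose, $c(R^{2\s}-|x|^{2\s})_+$, does not work. At the origin, $L_K$ applied to this function is $+\infty$, not bounded: the integrand near $y=0$ behaves like $|y|^{2\s}\cdot|y|^{-n-2\s}=|y|^{-n}$, which is not integrable. So this function is not a subsolution of $L_K\,\cdot=c_0$ in $B_R$ and cannot give the lower bound $\phi_R(0)\geq C^{-1}R^{2\s}$. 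The fix is to use the genuine fractional torsion profile $c(R^2-|x|^2)_+^\s$ together with the scale-invariant Pucci maximal operator $M^+$: since $M^+$ is positively homogeneous of degree $-2\s$ under dilations, $M^+\big[(R^2-|\cdot|^2)_+^\s\big]$ is bounded in $B_R$ uniformly in $R$, and $L_K\leq M^+$ pointwise, so a fixed multiple of $(R^2-|x|^2)_+^\s$ is a subsolution for $L_K$ in $B_R$. With this correction your comparison argument $v(0)\geq\phi_R(0)-\norm{v}_{L^\infty}\to\infty$ goes through. Your earlier attempted reduction $L_Kv=1\Rightarrow\fraclaplacian v\geq 1/\Lambda$ is indeed invalid, as you yourself suspect, since the ellipticity bounds only give one-sided control when the integrand $v(x)-v(y)$ has a sign.

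For comparison, the paper's proof is considerably shorter and avoids barriers entirely: by interior regularity $v\in C^1(\R^n)$ with $|\nabla v|$ bounded; differentiating the equation gives $L_K v_{x_i}=0$ in $\R^n$; the Liouville theorem for bounded $L_K$-harmonic functions (available for $\lcal_0$ operators) forces each $v_{x_i}$ to be constant; hence $v$ is affine, and being bounded, constant---contradicting $L_Kv=1$. Your approach, once repaired, has the advantage of being more quantitative (it shows $\sup_{B_R}\phi_R\gtrsim R^{2\s}$, which is in fact what the paper needs next in Lemma~\ref{Lemma:SolBallToZero}), whereas the paper's route is cleaner but relies on the Liouville theorem as a black box.
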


\begin{proof}
	Assume by contradiction that such solution exists. Then, by interior regularity (see Section~\ref{Sec:Preliminaries}) $v\in C^1(\R^n)$ and $|\nabla v|\leq C$ in $\R^n$. For every $i = 1,\ldots, n$, we differentiate the equation with respect to $x_i$ to obtain
	\begin{equation*}
	\beqc{\PDEsystem}
	L_K  v_{x_i} &=& 0 & \textrm{ in } \R^n\,,\\
	|v_{x_i}| &\leq& C & \textrm{ in } \R^n\,.
	\eeqc
	\end{equation*}
	By the Liouville theorem for the operator $L_K $ (it is proved exactly as in \cite{RosOtonSerra-Stable}, see also \cite{SerraC2s+alphaRegularity}), $v_{x_i}$ is constant. Hence, $\nabla v$ is constant, and thus $v$ is affine. But since $v$ is bounded, $v$ must be constant, and we arrive at a contradiction with $L_K v=1$.
\end{proof}

With this result we can introduce the auxiliary function that we will use to prove the parabolic maximum principle of Proposition~\ref{Prop:ParaMaxPrp}.

\begin{lemma}
	\label{Lemma:SolBallToZero}
	Let $L_K \in \lcal_0(n,\s,\lambda, \Lambda)$. Then, for every $R>0$ there exists a constant $M_R>0$ and a continuous function $\psi_R\geq 0$ solution to
	\begin{equation}
	\label{Eq:psiRProblem}
	\beqc{\PDEsystem}
	L_K  \psi_R &=& -1/M_R & \textrm{ in } B_R\,,\\
	\psi_R &=& 1 & \textrm{ in } \R^n\setminus B_R\,,
	\eeqc
	\end{equation}
	satisfying 
	$$
	\psi_R \to  0 \ \text{ pointwise }  \text{ and } \quad M_R  \to +\infty \ \quad \text{ as } R\to +\infty\,.
	$$
\end{lemma}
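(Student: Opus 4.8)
The plan is to take $\phi_R$ to be the torsion function of $B_R$, i.e. the unique bounded solution of
$$
\beqc{\PDEsystem}
L_K \phi_R & = & 1 & \textrm{ in } B_R\,,\\
\phi_R & = & 0 & \textrm{ in } \R^n\setminus B_R\,,
\eeqc
$$
and then set $M_R := \norm{\phi_R}_{L^\infty(\R^n)}$ and $\psi_R := 1 - \phi_R/M_R$. Existence of $\phi_R$ follows from the Lax--Milgram theorem applied in $\H^K_0(B_R)$ (whose seminorm is equivalent to the $\H^\s$ one by \eqref{Eq:Ellipticity}); by Propositions~\ref{Prop:InteriorRegularity} and~\ref{Prop:BoundaryRegularity}, $\phi_R$ is a classical solution which is $C^{2\s+\alpha}$ in $B_R$ and $C^{\alpha_0}$ up to $\partial B_R$, hence continuous on all of $\R^n$. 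Since $L_K\phi_R = 1 \geq 0$ in $B_R$ and $\phi_R = 0$ in $\R^n\setminus B_R$, the weak maximum principle gives $\phi_R\geq 0$; thus $0\leq\psi_R\leq 1$, so $\psi_R\geq 0$, $\psi_R$ is continuous, $\psi_R\equiv 1$ in $\R^n\setminus B_R$, and $L_K\psi_R = -1/M_R$ in $B_R$, which is exactly \eqref{Eq:psiRProblem}. Moreover $\phi_R$ is even: $\phi_R(-\,\cdot\,)$ solves the same problem by \eqref{Eq:ParityK} and the symmetry of $B_R$, so $\phi_R(-x)=\phi_R(x)$ by uniqueness.

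Next I would show $M_R\to+\infty$. Comparing $\phi_R$ with barriers built from $(R^2-|x|^2)_+^\s$ --- as is standard for operators in $\lcal_0$ (see e.g. \cite{RosOton-Survey}) --- one obtains constants $0<c_1\leq c_2$, depending only on $n,\s,\lambda,\Lambda$, with $c_1R^{2\s}\leq M_R\leq c_2R^{2\s}$, so in particular $M_R\to+\infty$. (Alternatively, $M_R\to+\infty$ follows by contradiction: if $M_{R_j}$ were bounded, interior estimates and Arzel\`a--Ascoli would give a subsequence $\phi_{R_j}\to\phi$ locally uniformly with $\phi$ bounded and $L_K\phi=1$ in $\R^n$, contradicting Lemma~\ref{Lemma:NoBddSolL=1}.)

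It remains to prove that $\psi_R\to 0$ pointwise, which is the delicate part. Fix $x_0\in\R^n$; since $\psi_R(x_0)=1-\phi_R(x_0)/M_R$, it is enough to show $\phi_R(x_0)/M_R\to 1$. Rescaling, set $\widetilde\phi_R(y):=\phi_R(Ry)/M_R$ for $y\in\R^n$; then $\widetilde\phi_R$ is even, equals $0$ outside $B_1$, satisfies $\norm{\widetilde\phi_R}_{L^\infty}=1$, and solves $\widehat L_R\widetilde\phi_R=\varepsilon_R$ in $B_1$, where $\widehat L_R$ is the operator with kernel $R^{n+2\s}K(R\,\cdot\,)\in\lcal_0(n,\s,\lambda,\Lambda)$ and $\varepsilon_R:=R^{2\s}/M_R\in[1/c_2,\,1/c_1]$. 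By the uniform interior estimates of Proposition~\ref{Prop:InteriorRegularity} (together with the boundary estimate of Proposition~\ref{Prop:BoundaryRegularity} and the standard stability of $\lcal_0$-solutions under convergence of kernels), along any subsequence $\widetilde\phi_R$ converges, uniformly on $\overline{B_1}$, to an even function $\widetilde\phi$ with $\norm{\widetilde\phi}_{L^\infty}=1$, $\widetilde\phi=0$ outside $B_1$, solving $\widehat L_\infty\widetilde\phi=\varepsilon_\infty$ in $B_1$ for some $\widehat L_\infty\in\lcal_0$ and $\varepsilon_\infty>0$. The key point is that such a $\widetilde\phi$ attains its maximum at the origin; granting this, $\widetilde\phi(0)=1$, and since $x_0/R\to 0$ and the convergence is uniform near $0$, $\widetilde\phi_R(x_0/R)\to\widetilde\phi(0)=1$ along the subsequence, hence $\psi_R(x_0)\to 0$ along it; as this holds along a further subsequence of every subsequence, $\psi_R(x_0)\to 0$.

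The main obstacle is exactly this last claim: that the torsion function of a ball for a symmetric-kernel operator in $\lcal_0$ (equivalently, the rescaled profiles $\widetilde\phi_R$ and their limits) is maximized at the center. For the fractional Laplacian it is immediate from the explicit formula $\phi_R(x)=d_{n,\s}(R^2-|x|^2)_+^\s$. In general the natural route is to combine the evenness of $\phi_R$ with the fact that, being a supersolution of $L_K$ with strictly positive right-hand side, $\phi_R$ has no interior local minimum (strong maximum principle); this already settles dimension one, while in higher dimensions one needs a more careful argument --- e.g. a symmetrization or reflection-comparison argument showing that the maximum of $\widetilde\phi_R$ cannot drift away from $0$ --- and this is the step I expect to require real work.
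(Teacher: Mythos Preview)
Your construction of $\phi_R$, $M_R$, $\psi_R$ and your contradiction argument for $M_R\to+\infty$ via Lemma~\ref{Lemma:NoBddSolL=1} are exactly the paper's. The divergence is in the pointwise convergence $\psi_R\to 0$, and here your rescaling manufactures a difficulty that the paper never meets.

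The paper does \emph{not} rescale. It works with $\varphi_R:=\phi_R/M_R$ directly: $0\le\varphi_R\le 1$, $L_K\varphi_R=1/M_R$ in $B_R$, and $\varphi_R=0$ outside $B_R$. Since $1/M_R\to 0$ and the balls $B_R$ exhaust $\R^n$, interior estimates plus Arzel\`a--Ascoli give, along a subsequence, a limit $\varphi\ge 0$ bounded and solving $L_K\varphi=0$ in \emph{all} of $\R^n$; the Liouville theorem for the fixed operator $L_K$ then forces $\varphi$ to be constant, and the paper concludes the constant equals $1$ from $\|\varphi_R\|_{L^\infty}=1$. The key gain over your approach is that the limiting equation is homogeneous on the whole space, so no information whatsoever about the \emph{location} of the maximum of the torsion function is needed.

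Your route instead produces a limit $\widetilde\phi$ solving $\widehat L_\infty\widetilde\phi=\varepsilon_\infty>0$ in $B_1$ with zero exterior data, for some unknown $\widehat L_\infty\in\lcal_0$, and then you must show $\widetilde\phi(0)=\|\widetilde\phi\|_{L^\infty}$. For radially symmetric kernels this would follow (the torsion function is radial by uniqueness, and your ``no interior local minimum'' observation finishes it), but the lemma is stated for arbitrary $L_K\in\lcal_0$: the kernel is only assumed even, not radial, and for a genuinely anisotropic kernel there is no reason the maximum of the torsion function of a ball should sit at the center. A moving-planes argument would require reflection monotonicity of the kernel across \emph{every} hyperplane through the origin, which the ellipticity bounds \eqref{Eq:Ellipticity} alone do not give. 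So the obstacle you flag is real and, for general $K$, not a matter of filling in routine details. The remedy is simply to drop the rescaling and pass to the limit as the paper does.
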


\begin{proof}
	First, consider $\phi_R$ the solution to
	\begin{equation*}
	\beqc{\PDEsystem}
	L_K  \phi_R &=& 1 & \textrm{ in } B_R\,,\\
	\phi_R &=& 0 & \textrm{ in } \R^n\setminus B_R\,.
	\eeqc
	\end{equation*}
	Note that the existence of a weak solution to the previous problem is given by the Riesz representation theorem. Moreover, by standard regularity results (see Section \ref{Subsec:Regularity}), $\phi_R$ is in fact a classical solution and by the maximum principle, $\phi_R>0$ in $B_R$.
	
	Define $M_R := \sup_{B_R} \phi_R$. Since $M_R$ is increasing (to check this, use the maximum principle to compare $\phi_R$ and $\phi_{R'}$ with $R>R'$), it must have a limit $M\in \R \cup \{+\infty\}$. Assume by contradiction that $M<+\infty$ and consider the new function $ \varphi_R := \phi_R/M_R$, which satisfies
	\begin{equation}
	\label{Eq:varphiRProblem}
	\beqc{\PDEsystem}
	L_K  \varphi_R &=& 1/M_R & \textrm{ in } B_R\,,\\
	\varphi_R &=& 0 & \textrm{ in } \R^n\setminus B_R\,, \\
	\varphi_R &\leq & 1\,.
	\eeqc
	\end{equation}
	By a standard compactness argument, we deduce that as $R\to +\infty$, $\varphi_R$ converges (up to a subsequence) to a function $\varphi$ that solves $L_K  \varphi = 1/M$ in $\R^n$ and satisfies  $|\varphi| \leq 1$. This contradicts Lemma~\ref{Lemma:NoBddSolL=1} and therefore, $M_R \to +\infty$ as $R\to +\infty$. 
	
	Define now $\psi_R := 1-\phi_R/M_R = 1-\varphi_R$, which solves trivially \eqref{Eq:psiRProblem}. Thus, it only remains to show that $\psi_R \to 0$ as $R\to +\infty$. We will see that $\varphi_R \to	1$  as $R\to +\infty$. Recall that $\varphi_R$ solves problem \eqref{Eq:varphiRProblem}, and by the previous arguments, by letting $R\to +\infty$ we have that a subsequence of $\varphi_R$ converges uniformly in compact sets to a bounded function $\varphi\geq 0$ that solves $ L_K \varphi = 0 $ in $\R^n$. By the Liouville theorem, $\varphi$ must be constant, and since its $L^\infty$ norm is $1$ and $\varphi\geq 0$, we conclude $\varphi\equiv 1$.	
\end{proof}

With these ingredients, we establish now the parabolic maximum principle in $\R^n \times (0,+\infty)$. 

\begin{proof}[Proof of Proposition~\ref{Prop:ParaMaxPrp}]
	First of all, note that with the change of function $\tilde{v}(x,t) = \e^{-\alpha\,t} v(x,t)$ we can reduce the initial problem in the statement of Proposition~\ref{Prop:ParaMaxPrp} to
	\begin{equation*}
	\beqc{\PDEsystem}
	\partial_t \tilde{v} + L_K  \tilde{v} &\leq& 0 & \textrm{ in } \Omega \subset\R^n\times(0,+\infty)\,,\\
	\tilde{v} &\leq& 0 & \textrm{ in }  \left(\R^n\times(0,+\infty)\right) \setminus  \Omega\,,\\
	\tilde{v}(x,0)&\leq& 0 & \textrm{ in } \R^n\,,
	\eeqc
	\end{equation*}
	if we take $\alpha > \norm{c}_{L^\infty}$ and $\Omega := \{(x,t)\in \R^n\times(0,+\infty) \ : \ v(x,t) > 0\}$.
	
	Now, consider the function 
	$$ 
	w_R(x,t) := \norm{\tilde{v} }_{L^\infty(\R^n \times (0,+\infty))} \left( \psi_R + \dfrac{t}{M_R} \right)\,,
	$$
	where $\psi_R$ and $M_R$ are defined in Lemma~\ref{Lemma:SolBallToZero}. Then, it is easy to check that $w_R$ satisfies
	\begin{equation*}
	\beqc{\PDEsystem}
	\partial_t w_R + L_K  w_R &=& 0 & \textrm{ in }B_R\times(0,T)\,,\\
	w_R(x,0) &\geq& 0 & \textrm{ in } B_R\,,\\
	w_R(x,t) &\geq& \norm{\tilde{v}}_{L^\infty(\R^n \times (0,+\infty))}  & \textrm{ in } \left( \R^n\setminus B_R\right) \times (0,T) \,,
	\eeqc
	\end{equation*}
	for every $T>0$ and $R>0$. Since $w_R \geq 0 \geq \tilde{v}$ in  $\left(\R^n\times(0,+\infty)\right)\setminus \Omega$, by the maximum principle in $(B_R\times (0,T))\cap \Omega$ (see Lemma \ref{Lemma:ParabolicmaxPrpBdd}) we can easily deduce that $ w_R\geq \tilde{v} $ in $B_R\times(0,T)$.
	
	Finally, given an arbitrary point $(x_0,t_0) \in \Omega$, take $R_0>0$ and $T>0$ such that $(x_0,t_0)\in B_{R_0}\times (0,T)$. Thus,
	$$ 
	\tilde{v}(x_0,t_0) \leq w_R(x_0,t_0) =  \norm{\tilde{v} }_{L^\infty(\R^n \times (0,+\infty))} \left( \psi_R (x_0) + \dfrac{t_0}{M_R} \right), \,\,\,\,\,\text{ for every }\,\,\, R\geq R_0.
	$$
	Letting $R \to +\infty$ and using that $\psi_R(x_0) \to 0$ and $M_R \to +\infty$ (see Lemma~\ref{Lemma:SolBallToZero}), we conclude $ \tilde{v}(x_0,t_0) \leq  0$, and therefore $ v(x_0,t_0) = \e^{\alpha\,t_0}\,\tilde{v}(x_0,t_0) \leq 0$.
\end{proof}


By using the Harnack inequality and the parabolic maximum principle we can now establish Theorem~\ref{Th:LiouvilleSemilinearWholeSpace}. The proof follows the ideas of Berestycki, Hamel, and Nadirashvili from Theorem 2.2 in \cite{BerestyckiHamelNadi} but adapted to the whole space and with an integro-differential operator.

\begin{proof}[Proof of Theorem~\ref{Th:LiouvilleSemilinearWholeSpace}]

	Assume $v\not\equiv 0$. Then, by the strong maximum principle $v>0$. Our goal is to show that $v\equiv 1$, and this will be accomplished in two steps.
	
	\textbf{Step 1: We show that $m:=\inf_{\R^n} v >0$.} 
	
	By contradiction, we assume $m=0$. Then, there exists a sequence $\{x_k\}_{k\in\N}$ such that $v(x_k)\rightarrow 0$ as $k \rightarrow +\infty$.
	
	On the one hand, by the Harnack inequality of Proposition~\ref{Prop:HarnackSemilinear}, given any $R>0$ we have 
	\begin{equation}
	\label{Eq:Harnack}
	\sup_{B_R(x_k)}v \leq C_R \inf_{B_R(x_k)}v \leq C_R \, v(x_k) \rightarrow 0 \,\,\text{as}\,\, k\rightarrow +\infty.
	\end{equation}
	Moreover, since $f(0) = 0 $ and $f'(0)>0$, it is easy to show that $f(t)\geq f'(0)t/2$ if $t$ is small enough. Therefore, from this and \eqref{Eq:Harnack}  we deduce that there exists $M(R)\in\N$ such that
	\begin{align}
	\label{Eq:WholeSpace2}
	L_K  v - \frac{f'(0)}{2}v \geq 0 \,\,\textrm{ in }\ B_R(x_{M(R)})\,.
	\end{align}

	On the other hand, let us define
	$$  \lambda_R^{x_0} = \inf_{\substack{\varphi\in C^1_c(B_R(x_0))\\ \varphi\not\equiv 0}} \frac{\ds  \dfrac{1}{2}\int_{\R^n}\int_{\R^n}|\varphi(x)-\varphi(y)|^2\,K(x-y) \d x \d y}{\ds \int_{\R^n}\varphi(x)^2 \d x}, 
	$$
	which decreases to zero uniformly in $x_0$ as $R\to +\infty$ from being $L_K \in\mathcal{L}_0$ (see the proof of Lemma~\ref{Lemma:FirstOddEigenfunction} and also Proposition~9 of \cite{ServadeiValdinoci}). Therefore, there exists $R_0>0$ such that $ \lambda_R^x < f'(0)/2$ for all $x\in \R^n$ and $R\geq R_0$. In particular, by choosing $x=x_{M(R_0)}$ there exists $w\in C^1_c(B_{R_0}(x_{M(R_0)}))$ such that $w\not\equiv 0$ and
	\begin{equation}
	\label{Eq:Eigenfunction}
	\dfrac{1}{2}\int_{\R^n}\int_{\R^n}|w(x)-w(y)|^2\,K(x-y) \d x \d y < \frac{f'(0)}{2}\int_{\R^n}w^2 \d x.
	\end{equation}
	
	Finally, to get the contradiction, multiply \eqref{Eq:WholeSpace2} by $w^2/v\geq 0$ and integrate in $\R^n$. After symmetrizing the integral involving $L_K$ we get
	\begin{align*}
	0 &\leq \int_{\R^n} \frac{w^2}{v}  L_K v \d x - \frac{f'(0)}{2}\int_{\R^n} w^2 \d x \\
	&= \dfrac{1}{2}\int_{\R^n}\int_{\R^n}\big( v(x)-v(y) \big) \left( \frac{w^2(x)}{v(x)}-\frac{w^2(y)}{v(y)} \right) K(x-y) \d x \d y - \frac{f'(0)}{2}\int_{\R^n} w^2 \d x \\
	&\leq \dfrac{1}{2} \int_{\R^n}\int_{\R^n} |w(x)-w(y)|^2 K(x-y) \d x \d y - \frac{f'(0)}{2}\int_{\R^n} w^2 \d x ,
	\end{align*}
	which contradicts \eqref{Eq:Eigenfunction}. Here we have used that the kernel is positive and symmetric and the inequality \eqref{Eq:IdentityStability}. Therefore, $\inf_{\R^n} v >0$.
	
	\textbf{Step 2: We show that $v\equiv 1$.}
	
	Choose $0<\xi_0<\min\{1,m\}$, which is well defined by Step~1, and let $\xi(t)$ be the solution of the ODE
	$$
	\beqc{\PDEsystem}
	\dot{\xi}(t) &=& f(\xi(t)) & \textrm{ in }(0,+\infty)\,,\\
	\xi(0) &=& \xi_0\,.
	\eeqc
	$$
	Since $f>0$ in $(0,1)$ and $f(1) = 0$ we have that $\dot{\xi}(t)>0$ for all $t\geq 0$, and $\ds \lim_{t\to +\infty} \xi(t) = 1$.
	
	Now, note that both $v(x)$ and $\xi(t)$ solve the parabolic equation
	$$ \partial_t w + L_K w = f(w) \,\,\, \textrm{ in }\R^n\times (0,+\infty)\,, $$
	and satisfy
	$$ v(x) \geq m \geq \xi_0 = \xi(0). $$
	Thus, by the parabolic maximum principle (Proposition~\ref{Prop:ParaMaxPrp}) applied to $v-\xi$, taking $c(x) = -\big(f(v)-f(\xi)\big)/(v-\xi)$, we deduce that $v(x)\geq \xi(t)$ for all $x\in\R^n$ and $t\in(0,\infty)$. By letting $t \to +\infty$ we obtain
	$$
	v(x) \geq 1 \,\, \textrm{ in }\R^n\,.  
	$$
	In a similar way, taking $\tilde{\xi}_0>\norm{v}_{L^\infty} \geq 1$, using $f<0$ in $(1,+\infty)$, $f(1)=0$ and the parabolic maximum principle, we obtain the upper bound $v\leq 1$.
\end{proof}


\subsection{A one-dimensional symmetry result for positive solutions in a half-space}

In this subsection we establish Theorem~\ref{Th:SymmHalfSpace}. To do it, we proceed in three steps. First, we show that the solution is monotone in the $x_n$ direction by using a moving planes argument (see Proposition~\ref{Prop:MonotonyHalfSpace} below). Once this is shown, we can deduce that the solution $v$ has uniform limits as $x_n\pm\to \infty$. Finally, by using the sliding method (see Proposition~\ref{Prop:HalfSpaceLimUnif} below), we deduce the one-dimensional symmetry of the solution.

We proceed now with the details of the arguments. As we have said, the first step is to show that the solution is monotone. We establish the following result.

\begin{proposition}
	\label{Prop:MonotonyHalfSpace}
	Let $v$ be a bounded solution to one of the problems \eqref{Eq:P1} or \eqref{Eq:P2}, with $L_K \in \lcal_0$ such that the kernel $K$ is nonincreasing in the direction of $x_n$ in $\R^n_+$, that is, 
	$$
	K(x_H-y_H,x_n-y_n) \geq K(x_H-y_H,x_n+y_n) \,\,\,\,\text{for all } \,\, x,y\in \R^n_+.
	$$
	Let $f$ be a Lipschitz nonlinearity such that $f>0$ in $(0,\norm{v}_{L^\infty(\R^n_+)})$. 
	
	Then,
	$$
	\frac{\partial v}{\partial x_n} > 0 \,\,\,\, \text{ in } \,\,\R^n_+.
	$$
\end{proposition}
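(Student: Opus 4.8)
The plan is to prove the monotonicity by a moving planes argument in the $x_n$-direction, in the spirit of \cite{DipierroSoaveValdinoci}, using the monotonicity of the kernel to produce a linear nonlocal operator with a sign-definite kernel. For $\lambda>0$ let $x^\lambda:=(x_H,2\lambda-x_n)$ denote the reflection across $H_\lambda:=\{x_n=\lambda\}$, and set $v_\lambda(x):=v(x^\lambda)$, $w_\lambda:=v_\lambda-v$ and $\Sigma_\lambda:=\{x\in\R^n:0<x_n<\lambda\}$. The function $w_\lambda$ is odd with respect to $H_\lambda$, that is $w_\lambda(x^\lambda)=-w_\lambda(x)$, and vanishes on $H_\lambda$. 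Using $v>0$ in $\R^n_+$ together with the antisymmetry of $v$ in $x_n$ (for problem \eqref{Eq:P1}) or with $v\equiv 0$ outside $\R^n_+$ (for \eqref{Eq:P2}), one checks that $w_\lambda>0$ on $\{x_n\le 0\}$ and $w_\lambda<0$ on $\{x_n\ge 2\lambda\}$. The goal is to show that $w_\lambda\ge 0$ in $\Sigma_\lambda$ for every $\lambda>0$; letting $x$ vary and choosing $\lambda$ suitably then gives that $v$ is nondecreasing in $x_n$, and a strong maximum principle yields the strict inequality.

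Since $w_\lambda$ is odd across $H_\lambda$, splitting the integral defining $L_K$ over $\{y_n<\lambda\}$ and its reflection --- exactly as in the derivation of \eqref{Eq:OperatorOddF} --- gives, for $x\in\Sigma_\lambda$,
\begin{equation*}
L_K w_\lambda(x)=\int_{\{y_n<\lambda\}}\big(w_\lambda(x)-w_\lambda(y)\big)\,J_\lambda(x,y)\,\d y\;+\;d_\lambda(x)\,w_\lambda(x),
\end{equation*}
with $J_\lambda(x,y):=K(x-y)-K(x-y^\lambda)$ and $d_\lambda(x):=2\int_{\{y_n<\lambda\}}K(x-y^\lambda)\,\d y>0$. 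The key observation is that the monotonicity hypothesis on $K$, combined with $K(z)=K(-z)$, forces $K(h,\cdot)$ to be even and nonincreasing in $|\cdot|$ for every $h$; since $|(x-y)_n|<|(x-y^\lambda)_n|$ whenever $x\in\Sigma_\lambda$ and $y_n<\lambda$, this gives $J_\lambda\ge 0$, while an estimate analogous to \eqref{Eq:ZeroOrderTerm} gives $d_\lambda(x)\ge c\,\dist(x,H_\lambda)^{-2\s}$. On the other hand, $v_\lambda$ solves $L_K v_\lambda=f(v_\lambda)$ in the reflected half-space $\{x_n<2\lambda\}\cap\R^n_+$ (this is where the evenness of $K$ in $x_n$ is used), so that $L_Kw_\lambda=f(v_\lambda)-f(v)=c_\lambda(x)\,w_\lambda$ in $\Sigma_\lambda$ with $\norm{c_\lambda}_{L^\infty}\le\norm{f}_{\Lip}$. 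Hence $w_\lambda$ satisfies a linear equation for a nonlocal operator with nonnegative kernel and with a zeroth-order coefficient, $c_\lambda-d_\lambda$, that is very negative near $H_\lambda$.

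With this structure the classical moving planes scheme goes through. For $\lambda$ small one has $d_\lambda\ge c\lambda^{-2\s}>\norm{f}_{\Lip}$ throughout $\Sigma_\lambda$, so $L_K-c_\lambda$ obeys a maximum principle there in the spirit of Proposition~\ref{Prop:MaximumPrincipleForOddFunctions}; combined with $w_\lambda>0$ on $\{x_n\le 0\}$ this forces $w_\lambda\ge 0$ in $\Sigma_\lambda$. Since $\Sigma_\lambda$ is unbounded in the $x_H$-directions, the conclusion is reached by assuming $\inf_{\Sigma_\lambda}w_\lambda<0$, taking a minimizing sequence, translating in $x_H$, and passing to a local limit (using the interior and boundary estimates recalled in Section~\ref{Sec:Preliminaries}); the limiting function would attain a negative minimum either at an interior point of $\Sigma_\lambda$, contradicting $J_\lambda\ge 0$ and the strong maximum principle, or on $\{x_n=0\}$ or $H_\lambda$, where $w_\lambda$ has the wrong sign or vanishes. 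I would then set $\Lambda_0:=\sup\{\mu>0:\ w_\lambda\ge 0\text{ in }\Sigma_\lambda\text{ for all }\lambda\le\mu\}$ and show $\Lambda_0=+\infty$: by continuity $w_{\Lambda_0}\ge 0$ in $\Sigma_{\Lambda_0}$, the strong maximum principle promotes this to $w_{\Lambda_0}>0$ in $\Sigma_{\Lambda_0}$, and a translation--compactness argument rules out $\Lambda_0<+\infty$, exactly as in \cite{DipierroSoaveValdinoci}. Once $w_\lambda\ge 0$ in $\Sigma_\lambda$ for all $\lambda$, we get $v(x_H,x_n)\le v(x_H,2\lambda-x_n)$ for $0<x_n<\lambda$, i.e. $v$ is nondecreasing in $x_n$ on $\R^n$; applying the strong maximum principle to $\partial_{x_n}v$ if $f\in C^1$, or to the increments $v(x_H,x_n+h)-v(x)\ge 0$ otherwise, and using that $v$ vanishes on $\{x_n=0\}$ while being positive above, we conclude $\partial_{x_n}v>0$ in $\R^n_+$.

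I expect the main obstacle to be the non-compactness of $\Sigma_\lambda$ in the $x_H$-directions: both the initialization for small $\lambda$ and the step ruling out $\Lambda_0<+\infty$ require replacing ``the infimum is attained'' by translation--compactness arguments, and one must check that no part of the (putatively negative) infimum escapes to $x_H$-infinity nor concentrates on $\{x_n=0\}\cup H_\lambda$. The other point needing care is the derivation of the sign $J_\lambda\ge 0$ --- and of the identity $L_Kv_\lambda=f(v_\lambda)$ in the reflected region --- from the stated one-sided monotonicity of $K$ together with its symmetry; this is precisely where the hypothesis on the kernel is essential.
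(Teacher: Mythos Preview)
Your proposal is correct and follows the same moving-planes strategy as the paper; the paper's own proof is in fact only a sketch that refers to \cite{QuaasXia} for details. The one genuine technical difference lies in how the maximum principle in the unbounded strip $\Sigma_\lambda$ is obtained. You derive the odd-function decomposition $L_Kw_\lambda = \int J_\lambda(\cdot,y)(w_\lambda-w_\lambda(y))\,\d y + d_\lambda w_\lambda$ directly (mirroring \eqref{Eq:OperatorOddF}) and then handle the lack of compactness in $x_H$ by a translation--compactness argument. The paper instead proves a narrow-domain maximum principle for odd functions (Proposition~\ref{Prop:MaxPrpNarrowOdd}) via the Felmer--Wang truncation: one writes $w_\lambda = w_1 + w_2$ with $w_1 := w_\lambda\chi_{\{w_\lambda<0\}\cap\Sigma_\lambda}$, checks that $L_Kw_2\le 0$ on the negativity set using only $J_\lambda\ge 0$, and then applies an ABP-type estimate (Theorem~\ref{Th:ABPEstimate}, Corollary~\ref{Cor:MaxPpleNarrowDomains}) to $w_1$, which now vanishes outside a set with small $R(\Omega)$. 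The advantage of the paper's route is that the initialization step requires no compactness or behavior-at-infinity hypothesis whatsoever; your route is more self-contained (no ABP/weak Harnack machinery) but makes you carry the translation--compactness argument already at the first step. For the continuation step both approaches need a compactness argument, and the paper explicitly invokes a Harnack inequality (Proposition~\ref{Prop:HarnackSemilinear}) here; you should be aware that after translating in $x_H$ the limit $v^\infty$ could in principle be identically zero, and ruling this out in the continuation step is exactly where Harnack (or an equivalent device) enters.
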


To prove this monotonicity result, we use a moving planes argument, and for this reason we need a maximum principle in ``narrow'' sets for odd functions with respect to a hyperplane (see Proposition~\ref{Prop:MaxPrpNarrowOdd}). Recall that for a set $\Omega \subset \R^n$, we define the quantity $R(\Omega)$ as the smallest positive $R$ for which

\begin{equation}
\label{Eq:DefNarrow}
\dfrac{|B_R(x)\setminus \Omega|}{|B_R(x)|}\geq \dfrac{1}{2} \quad \text{ for every } x \in \Omega.
\end{equation}
If no such radius exists, we define $R(\Omega) = +\infty$. We say that a set $\Omega$ is ``narrow'' if $R(\Omega)$ is small depending on certain quantities.

An important result needed to establish the maximum principle in ``narrow'' sets is the following ABP-type estimate. It is proved in \cite{QuaasXia} for the fractional Laplacian, following the arguments in \cite{Cabre-ABP} (see also \cite{Cabre-Topics}). The proof for a general operator $L_K$ does not differ significantly from the one for the fractional Laplacian. Nevertheless, we include it here for the sake of completeness.

\begin{theorem}
	\label{Th:ABPEstimate}
	Let $\Omega \subset \R^n$ with $R(\Omega) < +\infty$. Let $L_K \in \lcal_0(n,\s,\lambda, \Lambda)$ and let $v\in L^1_\s(\R^n)\cap C^{\alpha}(\Omega)$, with $\alpha > 2\s$, such that $\sup_{\Omega} v < +\infty$ and satisfying
	$$
	\beqc{\PDEsystem}
	L_K v - c(x)v &\leq & h & \text{ in } \Omega\,, \\
	v & \leq & 0 & \text{ in } \R^n\setminus \Omega\,,
	\eeqc
	$$
	with $c(x)\leq 0$ in $\Omega$ and $h\in L^\infty(\Omega)$.
	
	Then,
	$$
	\sup_\Omega v \leq C R(\Omega)^{2\s} \norm{h}_{L^{\infty}(\Omega)}\,,
	$$
	where $C$ is a constant depending on $n$, $\s$, and $\Lambda$.
\end{theorem}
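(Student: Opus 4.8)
The plan is to follow Cabré's route to the nonlocal ABP estimate (as in \cite{QuaasXia}, after \cite{Cabre-ABP,Cabre-Topics}): the estimate will be reduced to a single pointwise inequality obtained at a point supplied by the concave envelope of $v^{+}:=\max\{v,0\}$. At the outset I may assume $M:=\sup_{\Omega}v>0$ and $\norm{h}_{L^\infty(\Omega)}<+\infty$, since otherwise there is nothing to prove, and I record that $v^{+}$ vanishes in $\R^{n}\setminus\Omega$ because $v\le 0$ there.

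The first step is to produce a good contact point. Given a small $\varepsilon\in(0,M)$, I pick $z_\varepsilon\in\Omega$ with $v(z_\varepsilon)>M-\varepsilon$ and let $\Gamma$ be the concave envelope of $v^{+}$ in a large ball $B:=B_\rho(z_\varepsilon)$. Since $v^{+}\ge 0$ is bounded, $\max_{B}\Gamma=\sup_{B}v^{+}\in(M-\varepsilon,M]$ and this maximum is attained at a contact point $x_0$; there $\Gamma(x_0)=v^{+}(x_0)=v(x_0)>M-\varepsilon>0$, so in particular $x_0\in\Omega$. Because $x_0$ is a \emph{maximum} of the concave function $\Gamma$, the constant $v(x_0)$ is a supporting hyperplane at $x_0$, and hence
$$
v(y)\le v^{+}(y)\le\Gamma(y)\le v(x_0)\qquad\text{for all }y\in B_\rho(z_\varepsilon).
$$

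The second step is the one-point estimate. Since $x_0\in\Omega$, $c(x_0)\le 0$ and $v(x_0)>0$, the equation gives $L_K v(x_0)\le h(x_0)+c(x_0)v(x_0)\le\norm{h}_{L^\infty(\Omega)}$. For the matching lower bound I write $L_K v(x_0)=\int_{\R^n}\bpar{v(x_0)-v(y)}K(x_0-y)\,\mathrm{d}y$ and split $\R^{n}$ into $B_R(x_0)\setminus\Omega$, $B_R(x_0)\cap\Omega$, $B_\rho(z_\varepsilon)\setminus B_R(x_0)$ and $\R^{n}\setminus B_\rho(z_\varepsilon)$, where $R:=R(\Omega)$. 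On the two middle regions $v(y)\le v(x_0)$ by the previous step, so those contributions are $\ge 0$; near $x_0$ the integrand is $O(|x_0-y|^{\alpha})$ with $\alpha>2\s$, which is what makes the integral converge. On $B_R(x_0)\setminus\Omega$ one has $v(y)\le 0$, so $v(x_0)-v(y)\ge v(x_0)$, and combining the lower ellipticity bound $K(z)\ge\lambda c_{n,\s}|z|^{-n-2\s}$ with $|B_R(x_0)\setminus\Omega|\ge\tfrac12|B_R|$ — the very definition \eqref{Eq:DefNarrow} of $R(\Omega)$ — this term is bounded below by $c(n,\s,\lambda)\,v(x_0)\,R^{-2\s}$. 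The only region where $v$ can slightly exceed $v(x_0)$ is $\R^{n}\setminus B_\rho(z_\varepsilon)$, where $v\le M<v(x_0)+\varepsilon$ and the kernel is controlled by the upper ellipticity bound, so that contribution is negative but $o(1)$ as $\rho\to+\infty$ (provided, as part of the construction, $x_0$ is kept away from $\partial B_\rho(z_\varepsilon)$). Collecting the four pieces, then letting $\rho\to+\infty$ and $\varepsilon\to 0$, gives $M\le C\,R(\Omega)^{2\s}\norm{h}_{L^\infty(\Omega)}$ with $C=C(n,\s,\lambda,\Lambda)$, as claimed.

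I expect the main obstacle to be the rigorous implementation of the first step when $\Omega$ is unbounded and $v$ does not attain its supremum: in that case $v^{+}$ may fail to be upper semicontinuous (it can drop to $0$ across $\partial\Omega$), so one cannot simply pick a maximum point; instead the contact point must be extracted through concave envelopes over an expanding sequence of balls — or after a mollification of $v$ — while keeping track of the facts that $v(x_0)$ stays near $M$ and that the horizontal supporting hyperplane dominates $v$ on a ball \emph{centered at $x_0$} of radius tending to infinity (this last point being exactly what kills the tail term). Once the contact point is secured, the remainder is the short computation above, which uses only \eqref{Eq:Ellipticity} and \eqref{Eq:DefNarrow} and is therefore insensitive to replacing the fractional kernel by a general one in $\lcal_0$; this is the sense in which the argument does not differ significantly from the fractional Laplacian case.
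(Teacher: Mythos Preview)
Your route is not the paper's, and — despite your attribution — it is not the argument of \cite{Cabre-ABP,Cabre-Topics,QuaasXia} either. The paper applies the weak Harnack inequality of Cozzi (Proposition~\ref{Prop:WeakHarnack}) to the nonnegative function $w := M - v^+$: one checks $L_K w \geq -h$ in $B_{2R(\Omega)}(x_\delta)$, notes that $w \equiv M$ on $B_{R(\Omega)}(x_\delta)\setminus\Omega$ (a set of relative measure $\geq 1/2$ by \eqref{Eq:DefNarrow}) so that the averaged left-hand side of weak Harnack is at least $2^{-1/\varepsilon} M$, while the infimum on the right is $\leq w(x_\delta) \leq \delta$; sending $\delta \to 0$ yields the estimate. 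The point is that weak Harnack needs only a \emph{near}-maximum point of $v$, never an attained one, and this is precisely what sidesteps the obstacle you raise in your last paragraph.

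Your concave-envelope construction, by contrast, is not doing any work: you use $\Gamma$ only to produce a point where $\Gamma$ touches $v^+$ at its maximum, but since $\max_B \Gamma = \sup_B v^+$, such a point exists if and only if $\sup_B v^+$ is itself attained. So Step~1 collapses to ``take the maximum point of $v^+$'', which need not exist under the stated hypotheses ($v$ is not assumed continuous up to $\partial\Omega$, and $\Omega$ may be unbounded). The gap you flag is therefore the entire content of Step~1, and your suggested patches do not close it: mollifying $v$ destroys the pointwise condition $v\le 0$ on $\R^n\setminus\Omega$ that your splitting in Step~2 relies on, and passing to larger balls simply reproduces the same non-attainment issue on each ball. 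A direct pointwise proof \emph{can} be made to work — for instance by perturbing $v$ with a fixed smooth bump $2\delta\,\psi(\cdot-x_\delta)$ to force an attained global maximum of $\tilde v$, computing $L_K\tilde v$ there, and absorbing $2\delta\,\|L_K\psi\|_{L^\infty}$ before letting $\delta\to 0$ — but this still requires care when $x_\delta$ drifts toward $\partial\Omega$, and none of that is in your sketch.
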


The only ingredient needed to show Theorem~\ref{Th:ABPEstimate} is the following weak Harnack inequality proved in  \cite{Cozzi-DeGiorgiClassesShort}.

\begin{proposition}[see Corollary 4.4 of \cite{Cozzi-DeGiorgiClassesShort}]
	
	\label{Prop:WeakHarnack}
	
	Let $\Omega \subset \R^n$ and $L_K\in (n,\s,\lambda, \Lambda)$. Let $w \in L^1_\s(\R^n)\cap C^{\alpha}(\Omega)$, with $\alpha > 2\s$, such that $w\geq 0$ in $\R^n$. Assume that $w$ satisfies weakly $L_K w \geq h$ in $\Omega$, for some $h\in L^\infty (\Omega)$. Then, there exists an exponent $\varepsilon > 0 $  and a constant $C > 1$, both depending on $n$, $\s$ and $\Lambda$, such that
	$$
	\bpar{ \fint_{B_{R/2}(x_0)} w^\varepsilon \d x}^{1/\varepsilon} \leq C \bpar{\inf_{B_{R}(x_0)} w + R^{2\s} \norm{h}_{L^{\infty}(\Omega)} }
	$$
	for every $x_0\in \Omega$ and $0<R<\dist(x_0, \partial \Omega)$.
\end{proposition}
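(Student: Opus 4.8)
This is, up to notational differences, Corollary~4.4 of \cite{Cozzi-DeGiorgiClassesShort}, so the plan is simply to verify that its hypotheses hold and to highlight why the \emph{global} sign condition $w\geq0$ in $\R^n$ --- rather than merely in $\Omega$ --- is the essential ingredient.

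The approach is the nonlocal De Giorgi--Nash--Moser scheme developed in \cite{Cozzi-DeGiorgiClassesShort}. First I would test the weak supersolution inequality $L_Kw\geq h$ against functions of the form $\eta^2(w-k)_-$, with $\eta$ a cutoff and $k\geq0$ a truncation level, and use the ellipticity bounds \eqref{Eq:Ellipticity} to derive a Caccioppoli-type energy estimate for the truncations $(w-k)_-$ of $w$ from below, with an error of order $R^{2\s}\norm{h}_{L^\infty(\Omega)}$ produced by the right-hand side. This says precisely that $w$ lies in the relevant ``dual'' fractional De Giorgi class with parameters depending only on $n$, $\s$, $\lambda$, and $\Lambda$. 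Once this membership is in hand, the weak Harnack inequality follows from the measure-theoretic part of \cite{Cozzi-DeGiorgiClassesShort}: an expansion-of-positivity (``measure-to-pointwise'') lemma bounds $\inf_{B_R}w$ from below as soon as $w$ fails to be small on a subset of positive measure of a larger ball, and then a Krylov--Safonov-type covering and iteration upgrade this into the stated inequality relating $\big(\fint_{B_{R/2}(x_0)}w^\varepsilon\big)^{1/\varepsilon}$ to $\inf_{B_R(x_0)}w+R^{2\s}\norm{h}_{L^\infty(\Omega)}$, for suitable $\varepsilon>0$ and $C>1$ depending only on $n$, $\s$, and $\Lambda$.

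The step I expect to be the crux is handling the genuinely nonlocal term in the Caccioppoli estimate, namely a tail of the form $\int_{\R^n\setminus B_r}(w(y)-k)_-\,|x-y|^{-n-2\s}\d y$ for $x\in B_{r/2}$, which a priori is not controlled by local data. This is exactly where the hypothesis enters: since $w\geq0$ on all of $\R^n$, one has the pointwise bound $(w(y)-k)_-=(k-w(y))^+\leq k$ for every $k\geq0$, so this tail is at most $C\,k\,r^{-2\s}$ --- a harmless contribution, proportional to the truncation level, that is absorbed into the iteration. With that observation the proof of Corollary~4.4 in \cite{Cozzi-DeGiorgiClassesShort} carries over verbatim. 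Finally, the regularity assumption $w\in C^\alpha(\Omega)$ with $\alpha>2\s$ will only be needed to give meaning to $L_Kw$ pointwise in $\Omega$; since the argument uses solely the weak formulation, it plays no other role.
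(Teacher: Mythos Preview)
Your proposal is a reasonable outline of the De Giorgi--Nash--Moser argument behind Corollary~4.4 of \cite{Cozzi-DeGiorgiClassesShort}, and your emphasis on the global sign condition $w\geq 0$ in $\R^n$ to control the nonlocal tail is exactly the point that makes the nonlocal Caccioppoli estimate close. However, the paper does not give its own proof of this statement at all: Proposition~\ref{Prop:WeakHarnack} is simply quoted from \cite{Cozzi-DeGiorgiClassesShort} and used as a black box in the proof of Theorem~\ref{Th:ABPEstimate}. So there is nothing to compare---your sketch goes beyond what the paper does, and is consistent with the approach of the cited reference.
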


With the previous weak Harnack inequality we can now establish the ABP estimate.

\begin{proof}[Proof of Theorem~\ref{Th:ABPEstimate}]
	First, note that it is enough to show it for $v > 0$ in $\Omega$ satisfying
	$$
	\beqc{\PDEsystem}
	L_K v &\leq & h & \text{ in } \Omega\,, \\
	v & \leq & 0 & \text{ in } \R^n\setminus \Omega\,.
	\eeqc
	$$
	Indeed, if we consider $\Omega_0 = \{x \in \Omega \ : \ v > 0\}$, then since $c \leq 0$ we have $L_K v \leq L_K v - c(x)v \leq h$ in $\Omega_0$.

	Define	$M:= \sup_\Omega v$. Then, for every $\delta >0$ there exists a point $x_\delta \in \Omega$ such that $v(x_\delta) \geq M - \delta$. Consider now the function $w := M - v^+$. Note that $0 \leq w \leq M$, $w(x_\delta) \leq \delta$, and $w \equiv M$ in $\R^n \setminus \Omega$. If we extend $h$ to be $0$ outside $\Omega$, we can easily verify that $L_K w \geq -h$ in $B_R(x_\delta)$.
	
	Now, by choosing $R= 2R(\Omega)$, and using the weak Harnack inequality of Proposition~\ref{Prop:WeakHarnack}, we get
	\begin{align*}
	M \left (\dfrac{1}{2}\right)^{1/\varepsilon} & \leq \bpar{M^{\varepsilon}\dfrac{|B_{R/2}(x_\delta)\setminus \Omega|}{|B_{R/2}(x_\delta)|}}^{1/\varepsilon}= \bpar{\dfrac{1}{|B_{R/2}(x_\delta)|} \int_{B_{R/2}(x_\delta)\setminus \Omega} w^\varepsilon \d x}^{1/\varepsilon} \\
	& \leq \bpar{ \fint_{B_{R/2}(x_\delta)} w^\varepsilon \d x}^{1/\varepsilon} \leq C \bpar{\inf_{B_{R}(x_\delta)} w + R^{2\s} \norm{h}_{L^{\infty}(\Omega)} } \\
	& \leq C \bpar{\delta + R^{2\s} \norm{h}_{L^{\infty}(\Omega)} }\,.
	\end{align*}
	The conclusion follows from letting $\delta \to 0$.
\end{proof}

As a consequence of this result, one can deduce easily a general maximum principle in ``narrow'' sets.

\begin{corollary}
	\label{Cor:MaxPpleNarrowDomains}
	Let $\Omega \subset \R^n$ with $R(\Omega) < +\infty$. Let $L_K \in \lcal_0(n,\s,\lambda, \Lambda)$ and let $v\in L^1_\s(\R^n)\cap C^{\alpha}(\Omega)$, with $\alpha > 2\s$, such that $\sup_{\Omega} v < +\infty$ and satisfying
	$$
	\beqc{\PDEsystem}
	L_K v + c(x)v &\leq & 0 & \text{ in } \Omega\,, \\
	v & \leq & 0 & \text{ in } \R^n\setminus \Omega\,,
	\eeqc
	$$
	with $c(x)$ bounded by below.
	
	Then, there exists a number $\overline{R} > 0$ such that $v \leq 0$ in $\Omega$ whenever $R(\Omega)< \overline{R}$.	
\end{corollary}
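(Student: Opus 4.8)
The plan is to reduce the statement to a direct application of the ABP-type estimate of Theorem~\ref{Th:ABPEstimate}, applied on the subset of $\Omega$ where $v$ is positive. Since $c$ is bounded by below, I would first fix a constant $b \geq 0$ with $c(x) \geq -b$ in $\Omega$, and set $M := \sup_\Omega v^+$, which is finite by hypothesis. If $M \leq 0$ there is nothing to prove, so one may assume $M > 0$ and work on the open set $\Omega_+ := \{x \in \Omega : v(x) > 0\}$, which is then nonempty and satisfies $\sup_{\Omega_+} v = M$.

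On $\Omega_+$ the function $v$ is positive and $C^\alpha$, so $L_K v$ is defined pointwise there, and from $L_K v + c(x)v \leq 0$ together with $-c(x) \leq b$ and $v \leq M$ one gets
\[
L_K v \leq -c(x)\, v \leq b\, v \leq b\, M \qquad \text{in } \Omega_+ .
\]
Moreover $v \leq 0$ in $\R^n \setminus \Omega_+$ (it is $\leq 0$ in $\R^n \setminus \Omega$ by hypothesis and $\leq 0$ in $\Omega \setminus \Omega_+$ by construction), $v \in L^1_\s(\R^n) \cap C^\alpha(\Omega_+)$ since $\Omega_+ \subseteq \Omega$, and $\sup_{\Omega_+} v = M < +\infty$. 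Finally, because $\R^n \setminus \Omega_+ \supseteq \R^n \setminus \Omega$, the density ratio $|B_R(x)\setminus\Omega_+|/|B_R(x)|$ dominates $|B_R(x)\setminus\Omega|/|B_R(x)|$ for every $x \in \Omega_+ \subseteq \Omega$, whence $R(\Omega_+) \leq R(\Omega) < +\infty$. Thus $v$ meets all the hypotheses of Theorem~\ref{Th:ABPEstimate} in $\Omega_+$, with zeroth-order coefficient $\equiv 0$ (trivially $\leq 0$) and right-hand side the constant $h \equiv bM$.

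Applying Theorem~\ref{Th:ABPEstimate} then yields
\[
M = \sup_{\Omega_+} v \;\leq\; C\, R(\Omega_+)^{2\s}\, (bM) \;\leq\; C\, b\, R(\Omega)^{2\s}\, M ,
\]
with $C = C(n,\s,\Lambda)$. If $b = 0$ this already forces $M \leq 0$, contradicting $M > 0$. If $b > 0$, dividing by $M > 0$ gives $R(\Omega)^{2\s} \geq (Cb)^{-1}$, so setting $\overline{R} := (Cb)^{-1/(2\s)}$ one reaches a contradiction as soon as $R(\Omega) < \overline{R}$. In either case $M \leq 0$, i.e.\ $v \leq 0$ in $\Omega$, which is the claim.

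I do not expect a genuine obstacle here: the result is essentially immediate once the ABP estimate is in hand. The only two points needing a little care are the passage to $\Omega_+$ — so that the product $c\,v$ can be absorbed into a \emph{bounded} right-hand side, exploiting $v \leq M$ there — and the elementary monotonicity $R(\Omega_+) \leq R(\Omega)$ of the ``narrowness'' radius under shrinking of the domain.
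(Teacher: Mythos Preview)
Your proof is correct and follows essentially the same route as the paper: both reduce to Theorem~\ref{Th:ABPEstimate} by absorbing the zeroth-order term into a right-hand side proportional to $\sup_\Omega v$, then conclude from the resulting inequality $\sup_\Omega v \leq C\,R(\Omega)^{2\s}\,\|c^-\|_{L^\infty}\sup_\Omega v$. The only cosmetic difference is that the paper decomposes $c = c^+ - c^-$ and applies the ABP estimate directly on $\Omega$ with coefficient $-c^+ \leq 0$ and right-hand side $c^- v^+$, whereas you first pass explicitly to $\Omega_+$ and take the coefficient to be zero; both arrive at the same contradiction.
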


\begin{proof}
	We write $c= c^+ - c^-$, and therefore $L_K v -(-c^+)v \leq c^- v^+	$. By Theorem~\ref{Th:ABPEstimate} we get
	$$
	\sup_\Omega v \leq C R(\Omega)^{2\s} \norm{c^- v^+}_{L^\infty(\Omega)} \leq C R(\Omega)^{2\s} \norm{c^-}_{L^\infty(\Omega)} \sup_\Omega v\,.
	$$
	Hence, if $C R(\Omega)^{2\s} \norm{c^-}_{L^\infty(\Omega)}  <1 $, we deduce that $v\leq 0$ in $\Omega$.
\end{proof}

The previous maximum principle in ``narrow'' sets is not suitable enough to apply the moving planes method, and we need to adapt it to the setting of odd functions with respect to a hyperplane (see Proposition~\ref{Prop:MaxPrpNarrowOdd} below, which will be deduced  from Corollary~\ref{Cor:MaxPpleNarrowDomains}). The reason why we need it is the following. 
In the moving the argument, we would want to use a maximum principle in a ``narrow'' band and applied to an odd function with respect to a hyperplane. 
However, odd functions cannot have a constant sign in the exterior of a band, and in the hypotheses of Corollary~\ref{Cor:MaxPpleNarrowDomains} there is a prescribed constant sign of a function outside the set $\Omega$. 
Thus, we need another version of a maximum principle in ``narrow'' sets that applies to odd functions and only requires a constant sign of the function at one side of a hyperplane (in the spirit of the maximum principles of Proposition~\ref{Prop:MaximumPrincipleForOddFunctions}). 
This is accomplished with the following result.

\begin{proposition}
	\label{Prop:MaxPrpNarrowOdd}
	Let $H$ be a half-space in $\R^n$, and denote by $x^\#$ the reflection of any point $x$ with respect to the hyperplane $\partial H$. Let $L_K \in \lcal_0$ with a positive kernel $K$ satisfying
	\begin{equation}
	\label{Eq:KernelSymmetry}
	K(x-y) \geq K(x-y^\#), \,\,\,\,\text{for all } \,\, x,y\in H.
	\end{equation}
	Assume that $v\in L^1_\s(\R^n)\cap C^{\beta}(\Omega)$, with $\beta > 2\s$, satisfies
	$$
	\beqc{\PDEsystem}
	L_K  v &\geq& c(x)\,v  &\textrm{ in } \Omega\subset H,\\
	v &\geq& 0 &\textrm{ in } H\setminus\Omega,\\
	v(x) &=& -v(x^\#) &\textrm{ in } \R^n,
	\eeqc
	$$
	with $c(x)$ bounded below.
	
	Then, there exist a number $\overline{R}$ such that $v \geq 0$ in $H$ whenever $R(\Omega) \leq \overline{R}$.
\end{proposition}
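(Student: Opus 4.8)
The plan is to reproduce, for the half-space $H$ and its reflection $x\mapsto x^\#$, the treatment of doubly radial odd functions from part~I, and then feed the outcome into the ABP-type estimate of Theorem~\ref{Th:ABPEstimate}. First I would rewrite $L_K v$ on $H$ exploiting the oddness of $v$: splitting the defining integral over $H$ and $\R^n\setminus H$ and substituting $y\mapsto y^\#$ in the second piece (so that $v(y^\#)=-v(y)$) gives, for $x\in H$,
\[
L_K v(x)=\int_{H}\bigl(v(x)-v(y)\bigr)\widetilde K(x,y)\,\d y+d(x)\,v(x),
\]
where $\widetilde K(x,y):=K(x-y)-K(x-y^\#)$ and $d(x):=2\int_H K(x-y^\#)\,\d y$. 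This is the hyperplane analogue of \eqref{Eq:OperatorOddF}. The monotonicity hypothesis \eqref{Eq:KernelSymmetry} gives $\widetilde K\ge 0$ on $H\times H$, while $\widetilde K(x,y)\le K(x-y)\le \Lambda c_{n,\s}|x-y|^{-n-2\s}$; since $|x-y^\#|\ge \dist(x,\partial H)$ for $x,y\in H$, the coefficient $d$ is finite and positive on $H$, in fact $d(x)\asymp \dist(x,\partial H)^{-2\s}$ just as in \eqref{Eq:ZeroOrderTerm}.

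Next I would reduce to the negativity set. Suppose $v\not\ge 0$ in $H$ and set $\Omega^-:=\{x\in H:\ v(x)<0\}$. Since $v\ge 0$ in $H\setminus\Omega$, we have $\Omega^-\subset\Omega$, hence $R(\Omega^-)\le R(\Omega)$. On $\Omega^-$ we have $v<0$ and $d\ge 0$, so the hypothesis $L_K v\ge c(x)v$ yields $\int_H(v(x)-v(y))\widetilde K(x,y)\,\d y\ge (c(x)-d(x))v(x)$ there. Let $V:=(-v)^+\chi_H\ge 0$, which equals $-v$ on $\Omega^-$ and vanishes on $H\setminus\Omega^-$; since $V\ge -v$ on $H$ with equality on $\Omega^-$, and $\widetilde K\ge 0$, the regional operator $\mathcal{L}_H w(x):=\int_H(w(x)-w(y))\widetilde K(x,y)\,\d y$ satisfies
\[
\mathcal{L}_H V(x)+\bigl(d(x)-c(x)\bigr)V(x)\ \le\ 0\qquad\text{in }\Omega^-,\qquad V=0\ \text{ in }H\setminus\Omega^-.
\]

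At this point $V$ is a nonnegative function, vanishing outside $\Omega^-$, which is a subsolution of a regional integro-differential operator with nonnegative kernel bounded above by the fractional-Laplacian kernel, plus a zero-order coefficient $d-c$ bounded below (to be absorbed exactly as in the proof of Corollary~\ref{Cor:MaxPpleNarrowDomains}). The ABP estimate of Theorem~\ref{Th:ABPEstimate}, through the weak Harnack inequality of Proposition~\ref{Prop:WeakHarnack}, then forces $\sup_{\Omega^-}V\le C\,R(\Omega^-)^{2\s}\cdot 0=0$, so $\Omega^-=\emptyset$ and $v\ge 0$ in $H$, provided $R(\Omega)\le\overline R$ for a suitable threshold $\overline R$.

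The hard part is this last step: the folded kernel $\widetilde K$ is \emph{not} uniformly elliptic from below up to $\partial H$, since the bound $\widetilde K(x,y)\gtrsim |x-y|^{-n-2\s}$ holds only for $|x-y|\lesssim \dist(x,\partial H)$ (with implicit constant depending on $n,\s,\lambda,\Lambda$). As with the two-sided bound \eqref{Eq:ZeroOrderTerm} in Proposition~\ref{Prop:MaximumPrincipleLinearized}, this degeneration is compensated by the blow-up $d(x)\asymp\dist(x,\partial H)^{-2\s}$: one splits $\Omega^-$ into the portion within distance $\rho_0$ of $\partial H$, where the large positive coefficient $d-c$ alone forces $V\equiv 0$, and the portion at distance $>\rho_0$, where $\widetilde K$ is genuinely elliptic at all scales $\lesssim R(\Omega)$ once $\overline R$ is small, so Theorem~\ref{Th:ABPEstimate} applies verbatim. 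Verifying that the truncation $V$ is compatible with this splitting, and that the ABP estimate transfers to the (non-translation-invariant, non-uniformly-elliptic) folded operator, is where the bulk of the work lies; the folding identity and the reduction to $\Omega^-$ are otherwise routine and parallel to part~I. The same argument, with $y^\#$ replaced by $y^\star$ and $H$ by $\ocal$, underlies the narrow-set maximum principle used for Proposition~\ref{Prop:MaximumPrincipleLinearized}.
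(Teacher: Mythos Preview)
Your reduction to the negativity set $\Omega^-$ and the folding identity are correct, but the route through the folded kernel $\widetilde K$ creates a difficulty that you do not actually resolve and that the paper entirely avoids. The ABP estimate of Theorem~\ref{Th:ABPEstimate} and the weak Harnack inequality behind it are stated for operators in $\lcal_0$, i.e.\ with a translation-invariant kernel satisfying the \emph{two-sided} bound \eqref{Eq:Ellipticity}. Your regional operator $\mathcal L_H$ has a kernel $\widetilde K(x,y)$ that is neither translation-invariant nor bounded below near $\partial H$, so Theorem~\ref{Th:ABPEstimate} does not apply to it as stated. Your proposed fix---split $\Omega^-$ into a boundary layer (handled by the blow-up of $d$) and an interior piece (where $\widetilde K$ is elliptic)---does not close: the operator is nonlocal, so on the interior piece $V$ need not vanish on its complement (it may be positive in the boundary layer), and the ABP argument requires $V=0$ outside the set where you run it. Conversely, on the boundary layer the large coefficient $d-c$ only helps at a \emph{global} maximum, which may well lie in the interior piece.

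The paper's proof sidesteps all of this by \emph{not} folding the kernel. It writes $v=v_1+v_2$ with $v_1:=v\,\chi_{\Omega_-}$ regarded as a function on all of $\R^n$ (zero on $\R^n\setminus\Omega_-$, including on the reflected half-space), and $v_2:=v-v_1$. The point is that $L_K v_1$ is computed with the \emph{original} kernel $K$, which is uniformly elliptic. One then checks directly, using the oddness of $v$ and the inequality \eqref{Eq:KernelSymmetry}, that $L_K v_2\le 0$ on $\Omega_-$: split $\R^n\setminus\Omega_-$ into $\Omega_-^\#$ and $(H\setminus\Omega_-)\cup(H\setminus\Omega_-)^\#$, pair each piece with its reflection, and both contributions come out nonpositive. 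This gives $L_K v_1\ge c(x)v_1$ in $\Omega_-$ with $v_1=0$ in $\R^n\setminus\Omega_-$, and now Corollary~\ref{Cor:MaxPpleNarrowDomains} applies verbatim since $L_K\in\lcal_0$. No degenerate kernel, no regional operator, no splitting near/far from $\partial H$---the whole proof is a few lines.
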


\begin{proof}
	Let us begin by defining $\Omega_- = \{x\in \Omega \,:\,\, v<0\}$. We shall prove that $\Omega_-$ is empty. Assume by contradiction that it is not empty. Then, we split $ v = v_1+v_2$, where
	\begin{equation*}
	v_1(x) =
	\begin{cases}
	v(x)  &\textrm{ in } \Omega_-,\\
	0 &\textrm{ in } \R^n\setminus\Omega_-,
	\end{cases}
	\quad \text{ and } \quad
	v_2(x) =
	\begin{cases}
	0  &\textrm{ in } \Omega_-,\\
	v(x) &\textrm{ in } \R^n\setminus\Omega_-.
	\end{cases}
	\end{equation*}
	
	We first show that $L_K v_2\leq 0$ in $\Omega_-$. To see this, take $x\in\Omega_-$ and thus
	$$
	L_K v_2(x) = \int_{\R^n\setminus\Omega_-} -v_2(y)K(x-y) \d y = -\int_{\R^n\setminus\Omega_-} v(y)K(x-y) \d y.
	$$
	Now, we split $\R^n\setminus\Omega_-$ into
	$$
	A_1 = \Omega_-^\#,\,\,\,\,\,\,\,\text{ and }\,\,\,\,\,\,\, A_2 = \left(H\setminus\Omega_-\right)\cup\left(H\setminus\Omega_-\right)^\#,
	$$
	and we compute the previous integral in these two sets separately using that $v$ is odd. On the one hand, since $v\leq 0$ in $\Omega_-$ and $K\geq 0$ in $\R^n$, we have
	\begin{align*}
	-\int_{A_1} v(y)K(x-y) \d y = -\int_{\Omega_-} v(y^\#)K(x-y^\#) \d y  = \int_{\Omega_-} v(y)K(x-y^\#) \d y \leq 0.
	\end{align*}
	On the other hand, by the kernel inequality \eqref{Eq:KernelSymmetry}
	\begin{align*}
	-\int_{A_2} v(y)K(x-y) \d y = -\int_{H\setminus\Omega_-} v(y)K(x-y) \d y  -\int_{H\setminus\Omega_-} v(y^\#)K(x-y^\#) \d y \\
	= -\int_{H\setminus\Omega_-} v(y)\left(K(x-y)-K(x-y^\#)\right) \d y \leq 0.
	\end{align*}
	Thus, we get $L_K v_2 \leq 0$ in $\Omega_-$.
	
	Finally, since $L_K v_2 \leq 0$ in $\Omega_-$, it holds
	$$ 
	L_K v_1 = L_K v-L_K v_2 \geq L_K v \geq c(x)\,v = c(x)\,v_1 \,\,\,\,\text{ in }\,\,\Omega_-. 
	$$
	Therefore $v_1$ solves
	\begin{equation*}
	\beqc{\PDEsystem}
	L_K v_1 &\geq& c(x)\,v_1   &\textrm{ in } \,\Omega_-,\\
	v_1 &=& 0 &\textrm{ in }\,\R^n\setminus\Omega_-,
	\eeqc
	\end{equation*}
	and we can apply the usual maximum principle for ``narrow'' sets (Corollary~\ref{Cor:MaxPpleNarrowDomains}) to $v_1$ in $\Omega_-$.  We deduce that $v_1\geq 0$ in all $\R^n$ whenever $R(\Omega)\leq \overline{R}$. This contradicts the definition of $v_1$ since we assumed that $\Omega_-$ was not empty. Thus, $\Omega_- = \varnothing$ and this yields $v\geq 0$ in $\Omega$.
\end{proof}

\begin{remark}
	A maximum principle such as Proposition~\ref{Prop:MaxPrpNarrowOdd} was already proved for the fractional Laplacian in \cite{ChenLiLi}, but with the additional hypothesis that either $\Omega$ is bounded or $\liminf_{x\in  \Omega,\ |x|\to \infty} v(x) \geq 0$. In the proof of Theorem~3.1 in \cite{QuaasXia}, Quaas and Xia use a suitable argument (the truncation used in the previous proof, previously used by Felmer and Wang in \cite{FelmerWang}) to avoid the requirement of such additional hypotheses on $\Omega$ or $v$.
\end{remark}

With the maximum principle in ``narrow'' sets for odd functions with respect to a hyperplane we can use the moving plane argument. Now we establish Proposition~\ref{Prop:MonotonyHalfSpace}.

\begin{proof}[Proof of Proposition~\ref{Prop:MonotonyHalfSpace}]
	The proof is based on the moving planes method, and is exactly the same as the analogue proof of Theorem~3.1 in \cite{QuaasXia}, where Quaas and Xia establish an equivalent result for the fractional Laplacian. For this reason, we give here just a sketch. As usual, for $\lambda > 0$ we define $w_\lambda (x) = v(x_H,2\lambda - x_n)-v(x_H,x_n)$ (recall that $x_H\in \R^{n-1}$) and since the nonlinearity is Lipschitz, $w_\lambda$ solves, in both cases ---\eqref{Eq:P1} or \eqref{Eq:P2}---, the following problem:
	$$
	\beqc{\PDEsystem} 
	L_K  w_\lambda &=& c_\lambda(x)\,w_\lambda  &\textrm{ in } \Sigma_\lambda\subset H_\lambda,\\ 
	w_\lambda &\geq& 0 &\textrm{ in } H_\lambda\setminus\Sigma_\lambda,\\ 
	w_\lambda(x_H,2\lambda - x_n) &=& - w_\lambda(x_H,x_n)  &\textrm{ in } \R^n, 
	\eeqc 
	$$
	where $\Sigma_\lambda := \left\{ x = (x_H,x_n) \ : \ 0<x_n<\lambda \right\}$ and $H_\lambda := \left\{ x = (x_H,x_n) \ : \ x_n<\lambda \right\}$ and $c_\lambda$ is a bounded function. Note that $w_\lambda$ is odd with respect to $\partial H_\lambda$. Then, using the maximum principle in ``narrow'' sets for odd functions (Proposition~\ref{Prop:MaxPrpNarrowOdd}) we deduce that, if $\lambda$ is small enough, $w_\lambda>0$ in $\Sigma_\lambda$. 
	
	To conclude the proof, we define
	$$
	\lambda^* := \sup\{\lambda \ : \ w_\eta>0 \,\, \text{ in } \,\, \Sigma_\lambda \,\, \text{ for all } \,\, \eta<\lambda\}.
	$$
	Note that $\lambda^*$ is well defined (but may be infinite) by the previous argument. To conclude the proof, one has to show that $\lambda^*=\infty$. This can be done by proving that, if $\lambda^*$ is finite, then there exists a small $\delta_0 > 0$ such that for every $\delta \in (0,\delta_0]$ we have
	$$
	w_{\lambda^* +  \delta} (x) > 0 \quad \text{ in } \Sigma_{\lambda^*-\varepsilon}\setminus \Sigma_{\varepsilon}
	$$
	for some small $\varepsilon$. This can be established using a compactness argument exactly as in Lemma~3.1 of \cite{QuaasXia} and thus we omit the details. In the argument a Harnack inequality is needed, one can use for instance Proposition~\ref{Prop:HarnackSemilinear}. Finally, by the maximum principle in ``narrow'' sets we deduce that $w_{\lambda^* +  \delta} (x) > 0 $ in $\Sigma_{\lambda^*+\delta}$ if $\delta$ is small enough, contradicting the definition of $\lambda^*$.
\end{proof}

Now, we present the other important ingredient needed in the proof of Theorem~\ref{Th:SymmHalfSpace}. It is the following symmetry result.

\begin{proposition}
	\label{Prop:HalfSpaceLimUnif}
	Let $L_K \in \lcal_0$ and let $v$ be a bounded solution to one of the following problems:
	\begin{equation}
	\tag{P3}
	\label{Eq:P3}
	\beqc{\PDEsystem}
	L_K  v &=& f(v)  &\textrm{ in }\R^n\,,\\
	\ds \lim_{x_n \to \pm \infty} v(x_H,x_n) &=& \pm 1 \,\,\, &\textrm{ uniformly}.
	\eeqc
	\end{equation}
	
	\begin{equation}
	\tag{P4}
	\label{Eq:P4}
	\beqc{\PDEsystem}
	L_K  v &=& f(v)  &\textrm{ in }\R^n_+ = \{ x_n>0\} \,,\\
	v &=& 0  &\textrm{ in } \R^n \setminus \R^n_+ = \{ x_n\leq 0\}\,,\\
	\ds \lim_{x_n \to + \infty} v(x_H,x_n) &=& 1 \,\,\, &\textrm{ uniformly}.
	\eeqc
	\end{equation}
	\reqnomode
	Assume that there exists a $\delta > 0$ such that
	$$ f' \leq 0 \quad \text{ in } \quad [-1,-1+\delta]\cup[1-\delta,1], $$
	for problem \eqref{Eq:P3} and
	$$ f' \leq 0 \quad \text{ in } \quad [1-\delta,1] $$
	for problem \eqref{Eq:P4}.
	
	Then, $v$ depends only on $x_n$ and is increasing in that direction.
\end{proposition}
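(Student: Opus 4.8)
The plan is to establish the proposition by the \emph{sliding method}, in two stages: first that $v$ is increasing in $x_n$, and then that it is one‑dimensional. I treat problem \eqref{Eq:P3}; the argument for \eqref{Eq:P4} is the same, the region $\{x_n\le 0\}$ where $v\equiv 0$ playing the role of the ``bottom'' region where $v$ is close to $-1$. We may assume $|v|\le 1$ in $\R^n$ (respectively $0\le v\le 1$), which follows from the maximum principle together with $f(\pm 1)=0$. Fix $\varepsilon\in(0,\delta)$ and, by the uniform limits, choose $M>0$ so that $v>1-\varepsilon$ in $\{x_n>M\}$ and $v<-1+\varepsilon$ in $\{x_n<-M\}$; on these two slabs the values of $v$ lie where $f'\le 0$.

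\emph{Stage 1: monotonicity in $x_n$.} For a vector $\tau\in\R^n$ set $v^\tau(x):=v(x+\tau)$ and $w^\tau:=v^\tau-v$. Since $L_K$ is translation invariant (only $L_K\in\lcal_0$ is used here, not any monotonicity of $K$), the function $w^\tau$ solves $L_Kw^\tau=c^\tau(x)\,w^\tau$ in $\R^n$, where $c^\tau(x):=\int_0^1 f'(v+\theta\,w^\tau)\,\d\theta$ is bounded (as $f\in C^1$ and $v$ is bounded) and, crucially, $c^\tau\le 0$ at every point where $v$ and $v^\tau$ both lie in $[1-\varepsilon,1]$ or both lie in $[-1,-1+\varepsilon]$. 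Taking $\tau=te_n$ with $t>0$, we get $c^{te_n}\le 0$ on the half-spaces $\{x_n>M\}$ and $\{x_n<-M-t\}$, so that only a transition slab $\{-M-t<x_n<M\}$ of finite width remains. I would then carry out the usual three steps: (i) $w^{te_n}\ge 0$ for all $t$ large enough; (ii) the set of $t>0$ for which $w^{te_n}\ge 0$ is closed; (iii) the critical parameter $t^\ast:=\inf\{t>0:\ w^{se_n}\ge 0 \text{ in }\R^n\text{ for all }s\ge t\}$ is zero. Step (ii) is immediate from the interior regularity estimates of Section~\ref{Sec:Preliminaries}. In steps (i) and (iii) the outer half-spaces are handled by the maximum principle for $L_K$ (there $c\le 0$), bounded truncations of the transition slab by the classical weak maximum principle, and the unbounded part of the transition slab by a compactness argument: assuming $w^{te_n}$ attains a negative infimum, I would translate the configuration so that an approximate negative minimiser sits at the origin; by the uniform limits the translates converge locally to a bounded entire solution $\bar v$ of $L_K\bar v=f(\bar v)$, and one checks — distinguishing whether the $x_n$-coordinates of the translating points run to $\pm\infty$ or stay bounded — that either $\bar v$ is the constant $+1$ or $-1$ (which contradicts the negative infimum directly) or $\bar v(\cdot+te_n)-\bar v$ attains an interior negative minimum, in which case the strong maximum principle for $L_K$ (used in the whole space, as in the proof of Theorem~\ref{Th:LiouvilleSemilinearWholeSpace}) forces it to be a negative constant, which iterated contradicts the boundedness of $\bar v$. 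The passage to $t^\ast$ in step (iii) is likewise closed by contradiction: if $t^\ast>0$, then either $w^{t^\ast e_n}>0$ and one can decrease $t^\ast$ slightly (repeating the previous analysis), or $w^{t^\ast e_n}$ touches $0$ and the strong maximum principle gives $w^{t^\ast e_n}\equiv 0$, making $v$ periodic in $x_n$ and contradicting the limits. Having $w^{te_n}\ge 0$ for every $t>0$, a final application of the strong maximum principle to each $w^{te_n}$ upgrades this to $w^{te_n}>0$, i.e.\ $v$ is strictly increasing in $x_n$.

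\emph{Stage 2: one-dimensional symmetry.} Let $e\in\Sph^{n-1}$ with $e\cdot e_n>0$. Because $v\to\pm1$ uniformly as $x_n\to\pm\infty$ and $e\cdot e_n>0$, for $t>0$ one still has $c^{te}\le 0$ on $\{x_n>M\}$ and on $\{x_n<-M-t(e\cdot e_n)\}$; hence Stage~1 applies verbatim with $e_n$ replaced by $e$ (no assumption on the kernel beyond $L_K\in\lcal_0$ being needed) and yields $v(x+te)\ge v(x)$ for all $x\in\R^n$ and $t>0$. By continuity this holds for every $\xi\in\R^n$ with $\xi\cdot e_n\ge 0$; applying it to a horizontal vector $\xi$ and to $-\xi$ gives $v(x+\xi)\ge v(x)\ge v(x+\xi)$, so $v(x+\xi)=v(x)$ for all $x$. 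Therefore $v$ does not depend on the first $n-1$ variables, i.e.\ $v=v(x_n)$, and it is increasing by Stage~1. In the case of \eqref{Eq:P4} the same scheme works, with the bottom region replaced by $\{x_n\le 0\}$, where $w^{te}\ge 0$ holds trivially since $v\equiv 0$ there.

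I expect the delicate point to be step (iii) of Stage~1 (and, implicitly, step (i)): there the transition slab is unbounded, so the plain weak maximum principle does not apply and the slab is too wide for a direct narrow-set argument, while the coefficient $c$ has no sign on it. The way out is precisely to exploit the \emph{uniform} limits through the compactness/translation argument above and to close it with the strong maximum principle, supplemented — for the ``almost touching from above'' alternative — by the maximum principle in narrow sets (Corollary~\ref{Cor:MaxPpleNarrowDomains}) applied on bounded truncations of the slab. All remaining tools (interior regularity, the strong maximum principle, the narrow-set maximum principle) are available from the earlier sections.
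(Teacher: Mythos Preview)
Your proposal is correct and follows exactly the approach the paper takes: the sliding method of Berestycki--Hamel--Monneau, transplanted to the nonlocal setting via translation invariance of $L_K$, the (weak and strong) maximum principle, and a compactness argument exploiting the uniform limits. The paper's own proof simply refers to \cite{BerestyckiHamelMonneau} and omits all details; your outline fills them in along the expected lines. One small remark: the paper lists the Liouville result of Theorem~\ref{Th:LiouvilleSemilinearWholeSpace} among the ingredients, whereas your compactness step avoids it by noting that horizontal translations preserve the uniform limits in $x_n$ (so the limit profile is again a solution of the same problem, and the strong maximum principle together with periodicity closes the contradiction); both routes are fine.
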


\begin{proof}
	It is based on the sliding method, exactly as in the proof of Theorem~1 in \cite{BerestyckiHamelMonneau}. The idea is, as usual, to define $ v^\tau(x) := v(x+\nu \tau) $ for every $\nu\in\R^n$ with $|\nu|=1$ and $\nu_n>0$, and the aim is to show that $v^\tau(x)-v(x)\geq 0$ for all $\tau\geq 0$. Despite the fact that $L_K$ is a nonlocal operator, the proof is exactly the same as the one in \cite{BerestyckiHamelMonneau} ---it only relies on the maximum principle, the translation invariance of the operator and the Liouville type result of Theorem~\ref{Th:LiouvilleSemilinearWholeSpace}. Therefore, we do not include here the details.
\end{proof}

Finally, we can proceed with the proof of Theorem~\ref{Th:SymmHalfSpace}.

\begin{proof}[Proof of Theorem~\ref{Th:SymmHalfSpace}]
	Note that by Proposition~\ref{Prop:HalfSpaceLimUnif} we only need to prove that
	$$
	\ds \lim_{x_n\to+ \infty} v(x_H,x_n) = 1
	$$
	uniformly. Therefore we divide the proof in two steps: first, we prove that the limit exists and is $1$, and then we prove that it is uniform.

	\textbf{Step 1:} Given $x_H\in \R^{n-1}$, then  $\ds \lim_{x_n\to +\infty} v(x_H,x_n) = 1$.
	
	By Proposition \ref{Prop:MonotonyHalfSpace} we know that $v$ is strictly increasing in the direction $x_n$. Since $v$ is also bounded by hypothesis, we know that, given $x_H\in\R^{n-1}$, the one variable function $v(x_H,\cdot)$ has a limit as $x_n\to +\infty$, which we call $\overline{v}(x_H)$. Note that, since $v(x_H,0) = 0$ and $v_{x_n}>0$, it follows that $\overline{v}(x_H) > 0$.
	
	Let $x_n^k$ be any increasing sequence tending to infinity. Define $v_k(x_H,x_n) := v(x_H,x_n+x_n^k)$. By the regularity theory of the operator $L_K $ (see Section~\ref{Sec:Preliminaries}) and a standard compactness argument, we see that, up to a subsequence, $v_k$ converge uniformly on compact sets to a function $v_\infty$ which is a classical solution to
	\begin{equation}
	\label{Eq:ProofSymmHalf-SemilinearEqWholeSpace}
	\beqc{\PDEsystem}
	L_K v_\infty &=& f(v_\infty)   &\textrm{ in } \,\R^n,\\
	v_\infty &\geq& 0   &\textrm{ in } \,\R^n.
	\eeqc
	\end{equation}
	By Theorem \ref{Th:LiouvilleSemilinearWholeSpace}, either $v_\infty\equiv 0$ or $v_\infty \equiv 1$. But, by construction,
	$$ v_\infty(x_H,0) = \lim_{k\to + \infty} v_k(x_H,0) = \lim_{k\to + \infty} v(x_H,x_n^k) = \overline{v}(x_H) > 0, $$
	and therefore the only possibility is
	$$ \lim_{x_n\to \infty} v(x_H,x_n) = 1 \quad \text{ for all } \ x_H\in\R^{n-1}. $$
	
	\textbf{Step 2:} The limit is uniform in $x_H$.
	
	Let us proceed by contradiction. Suppose that the limit is not uniform. This means that given any $\varepsilon>0$ small enough, there exists a sequence of points $(x_H^k,x_n^k)$ with $x_n^k\to +\infty$ such that $v(x_H^k,x_n^k) = 1-\varepsilon$. Similarly as before, the sequence of functions $\tilde{v}_k(x_H,x_n) = v(x_H+x_H^k,x_n+x_n^k)$ converge uniformly on compact sets to a function $\tilde{v}_\infty$ that also solves \eqref{Eq:ProofSymmHalf-SemilinearEqWholeSpace}. By Theorem \ref{Th:LiouvilleSemilinearWholeSpace}, either $\tilde{v}_\infty\equiv 0$ or $\tilde{v}_\infty \equiv 1$. But, by construction
	$$ 
	\tilde{v}_\infty(0,0) = \lim_{k\to +\infty} \tilde{v}_k(0,0) = \lim_{k\to +\infty} v(x_H^k,x_n^k) = 1-\varepsilon, 
	$$
	which is a contradiction for $\varepsilon>0$ small enough. Thus, the limit is uniform.
	
	Finally, by applying Proposition~\ref{Prop:HalfSpaceLimUnif}, we get that $v$ depends only on $x_n$ and is increasing in that direction.
\end{proof}

\section{Asymptotic behavior of a saddle-shaped solution}
\label{Sec:Asymptotic}

In this section, we show Theorem~\ref{Th:AsymptoticBehaviorSaddleSolution}, concerning the asymptotic behavior of the saddle-shaped solution. 

In order to establish the result, it is important to study one-dimensional layer solutions in $\R^n$. Actually, in relation with the available results concerning a conjecture by De Giorgi, in low dimensions all layer solutions are one-dimensional (see Subsection~\ref{Subsec:DeGiorgi}).

One-dimensional layer solutions in $\R^n$ are in correspondence with the ones in $\R$. This comes for free when dealing with the local case, since if $v$ is a solution to $-\ddot{v} = f(v)$ in $\R$,  then $w(x) = v(x\cdot e)$ solves $-\Delta w = f(w)$ in $\R^n$ for every unitary vector $e\in \R^n$. The same fact also happens for the fractional Laplacian, that is, if $v$ is a solution to $\fraclaplacian v = f(v)$ in $\R$, then $w(x)= v(x\cdot e)$ solves the same equation in $\R^n$. We can easily see this relation via the local extension problem.

Nevertheless, for a general operator $L_K$ this is not true anymore and we need a way to relate a solution to a one-dimensional problem with a one-dimensional solution to a $n$-dimensional problem. This is given in the next result. Some of its points appear in \cite{CozziPassalacqua} with a different notation but we state and prove them here for completeness.

\begin{proposition}
	\label{Prop:KernelsDimension}
	Let $L_K \in \mathcal{L}_0 (n,\s,\lambda,\Lambda)$ be a symmetric and translation invariant integro-differential operator of the form \eqref{Eq:DefOfLu} with kernel $K:\R^n\setminus \{0\} \to (0,+\infty) $. Define the one dimensional kernel $K_1 : \R \setminus \{0\} \to (0,+\infty) $ by
	\begin{equation}
	\label{Eq:OneDimKernel}
	K_1(\tau) := \int_{\R^{n-1}} K\left(\theta,\tau\right) \d \theta = |\tau|^{n-1} \int_{\R^{n-1}} K\left(\tau\sigma,\tau\right) \d \sigma.
	\end{equation}
	\begin{enumerate}[label=(\roman{*})]
		\item Let $v:\R\to\R$ and consider $w:\R^n\to\R$ defined by $w(x) = v(x_n)$. Then, $L_K w(x) = L_{K_1} v(x_n)$. If we assume moreover that $K$ is radially symmetric, then the same happens with $w(x) = v(x\cdot e)$ for every unitary vector $e\in \Sph^{n-1}$. That is, $L_K w(x) = L_{K_1} v(x \cdot e)$.
		\item  If $K$ is nonincreasing/decreasing in  the $x_n$-direction in $\{x_n>0\}$, then $K_1(\tau)$ is nonincreasing/decreasing in $(0,+\infty)$.
		\item $L_{K_1} \in \mathcal{L}_0 (1,\s,\lambda,\Lambda)$, and moreover, if $L_K $ is the fractional Laplacian in dimension $n$, then $L_{K_1}$ is the fractional Laplacian in dimension $1$.
		
	\end{enumerate}
\end{proposition}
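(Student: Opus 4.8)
The plan is to prove the three parts in order. Throughout I take $v$ bounded and $C^\alpha_{\loc}$ with $\alpha>2\s$ (and $v\in L^1_\s$), so that $L_Kw$ and $L_{K_1}v$ are defined, all integrals below converge absolutely, and Fubini's theorem applies freely.

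\emph{Part (i).} The computation is direct. Writing $y=(y_H,y_n)\in\R^{n-1}\times\R$ in \eqref{Eq:DefOfLu} and using that $w(x)=v(x_n)$ is independent of $x_H$,
$$
L_K w(x)=\int_{\R}\int_{\R^{n-1}}\big(v(x_n)-v(y_n)\big)\,K(x_H-y_H,x_n-y_n)\,\d y_H\,\d y_n .
$$
The change of variables $\theta=x_H-y_H$ in the inner integral gives $\int_{\R^{n-1}}K(x_H-y_H,x_n-y_n)\,\d y_H=K_1(x_n-y_n)$ by \eqref{Eq:OneDimKernel}, hence $L_Kw(x)=L_{K_1}v(x_n)$. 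For the radial case I would pick $R\in O(n)$ with $Re=e_n$; since $K$ is radially symmetric, the substitution $z=Ry$ in \eqref{Eq:DefOfLu} shows that $L_K$ commutes with rotations, i.e.\ $L_K(\tilde w\circ R)(x)=(L_K\tilde w)(Rx)$. Applying this with $\tilde w(z)=v(z_n)$ (so that $\tilde w\circ R=w$), together with the first case, yields $L_Kw(x)=(L_K\tilde w)(Rx)=L_{K_1}v((Rx)_n)=L_{K_1}v(x\cdot e)$.

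\emph{Part (ii).} The hypothesis that $K$ is nonincreasing (respectively decreasing) in the $x_n$-direction on $\{x_n>0\}$ means $K(\theta,a-b)\geq K(\theta,a+b)$ (respectively $>$) for all $\theta\in\R^{n-1}$ and all $a,b>0$. Integrating this inequality in $\theta$ over $\R^{n-1}$ gives $K_1(a-b)\geq K_1(a+b)$, and since any $0<\tau_1<\tau_2$ can be written as $\tau_1=a-b$, $\tau_2=a+b$ with $a=(\tau_1+\tau_2)/2>0$ and $b=(\tau_2-\tau_1)/2>0$, we obtain that $K_1$ is nonincreasing on $(0,+\infty)$. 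The strict case is identical, a strict pointwise inequality integrated over $\R^{n-1}$ staying strict.

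\emph{Part (iii).} Symmetry of $K_1$ follows from $K(z)=K(-z)$ after the change $\theta\mapsto-\theta$ in \eqref{Eq:OneDimKernel}. For the ellipticity bounds I would insert \eqref{Eq:Ellipticity} into \eqref{Eq:OneDimKernel} and use the scaling identity
$$
\int_{\R^{n-1}}\big(|\theta|^2+\tau^2\big)^{-\frac{n+2\s}{2}}\,\d\theta=C_{n,\s}\,|\tau|^{-1-2\s},\qquad C_{n,\s}:=\int_{\R^{n-1}}\big(1+|\sigma|^2\big)^{-\frac{n+2\s}{2}}\,\d\sigma<+\infty,
$$
(finite since $n+2\s>n-1$), which gives $\lambda\,c_{n,\s}C_{n,\s}\,|\tau|^{-1-2\s}\leq K_1(\tau)\leq\Lambda\,c_{n,\s}C_{n,\s}\,|\tau|^{-1-2\s}$. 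It then remains only to check that $c_{n,\s}C_{n,\s}=c_{1,\s}$, and this is the single delicate point: applying part (i) to the kernel $K(z)=c_{n,\s}|z|^{-n-2\s}$ of the $n$-dimensional fractional Laplacian, and using that $\fraclaplacian$ sends a function of $x_n$ alone to the one-dimensional fractional Laplacian of that one-variable function — immediate on the Fourier side, as the symbol $|\xi|^{2\s}$ restricted to $\xi_H=0$ equals $|\xi_n|^{2\s}$ — shows that the kernel $c_{n,\s}C_{n,\s}\,|\tau|^{-1-2\s}$ produces $\fraclaplacian$ on $\R$; comparing with its defining kernel $c_{1,\s}|\tau|^{-1-2\s}$ forces the identity. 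Substituting it back yields $L_{K_1}\in\lcal_0(1,\s,\lambda,\Lambda)$, and this same computation with $K=c_{n,\s}|z|^{-n-2\s}$ also gives the ``moreover'' statement. Apart from the constant identification, the whole argument is Fubini plus a scaling.
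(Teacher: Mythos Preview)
Your proof is correct, and parts (i) and (ii) match the paper's argument essentially line for line (the paper integrates the pointwise inequality $K(\theta,\tau_2)\le K(\theta,\tau_1)$ for $0<\tau_1<\tau_2$ directly, without the $a\pm b$ reparametrization, but this is cosmetic).

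The only genuine difference is in part (iii), in how the constant $c_{n,\s}C_{n,\s}=c_{1,\s}$ is obtained. The paper computes $C_{n,\s}=\int_{\R^{n-1}}(1+|\sigma|^2)^{-(n+2\s)/2}\,\d\sigma$ explicitly, passing to polar coordinates and recognizing a Beta integral, and then simplifies using the closed form $c_{n,\s}=\s\,2^{2\s}\Gamma(\tfrac{n}{2}+\s)/\big(\pi^{n/2}\Gamma(1-\s)\big)$; the identity drops out of the Gamma–function algebra. Your route instead appeals to the known fact that the $n$-dimensional $\fraclaplacian$ applied to a function of $x_n$ alone returns the one-dimensional $\fraclaplacian$, read off from the symbol, and then compares kernels. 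This is perfectly valid and arguably more conceptual, but be aware that the ``immediate on the Fourier side'' step needs a word of justification, since $w(x)=v(x_n)$ is not in $L^2(\R^n)$: the cleanest way is either to test on exponentials $v(t)=e^{it\xi_n}$ (for which $\fraclaplacian e^{ix\cdot(0,\xi_n)}=|\xi_n|^{2\s}e^{ix\cdot(0,\xi_n)}$ holds directly from the integral definition), or to use the Caffarelli--Silvestre extension, whose normalizing constant depends only on $\s$ and not on the ambient dimension. The paper's computation is more self-contained but requires knowing the explicit Gamma-function form of $c_{n,\s}$; yours avoids that at the cost of importing the dimensional-reduction fact for $\fraclaplacian$.
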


\begin{proof}
	We start proving point $(i)$. We write $y=(y_H,y_n)$, with $y_H\in \R^{n-1}$.
	\begin{align*}
	L_K w(x) &= \int_{\R^n} \big( w(x)-w(y) \big) K(x-y) \d y \\
	&=\int_{\R^n} \big( v(x_n)-v(y_n) \big) K\left(x_H-y_H,x_n-y_n\right) \d y_H \d y_n.
	\end{align*}
	Now we make the change of variables $\theta = x_H-y_H$. That is,
	\begin{align*}
	L_K w(x) 	&= \int_{\R} \big( v(x_n)-v(y_n) \big) \int_{\R^{n-1}} K\left(\theta,x_n-y_n\right) \d \theta \d y_n \\
	&= \int_{\R} \big( v(x_n)-v(y_n) \big) K_1(x_n-y_n ) \d y_n = L_{K_1}v(x_n).
	\end{align*}
	This shows the first equality in \eqref{Eq:OneDimKernel}. The alternative expression of the kernel $K_1$, that is useful in some cases, can be obtained from the change of variables $\theta = \tau\sigma$. Furthermore, in the case of $K$ radially symmetric, the result is valid for $u(x) = v(x\cdot e)$ for every unitary vector $e\in \Sph^{n-1}$ after a change of variables in the previous computations.
	
	The proof of point $(ii)$ follows directly from the first expression of the unidimensional kernel $K_1$. That is,
	$$ 
	K_1(\tau_2)-K_1(\tau_1) = \int_{\R^{n-1}} \big( K(\theta,\tau_2) - K(\theta,\tau_1)\big) \d \theta \geq 0 \quad \text{ for any } \quad \tau_2>\tau_1>0. 
	$$
	
	We establish now point $(iii)$. To do it, we bound the kernel $K_1$ using the ellipticity condition on $K$:
	\begin{align*}
	K_1(\tau) &= |\tau|^{n-1} \int_{\R^{n-1}} K\left(\tau(\sigma,1)\right) \d\sigma \geq |\tau|^{n-1} \int_{\R^n} c_{n,\s} \frac{\lambda}{|\tau|^{n+2\s}(|\sigma|^2+1)^{\frac{n+2s}{2}}} \d\sigma \\
	&= c_{n,\s} \frac{\lambda}{|\tau|^{1+2\s}} \int_{\R^{n-1}} \frac{\d\sigma}{(|\sigma|^2+1)^{\frac{n+2\s}{2}}} = c_{n,\s} \frac{\lambda}{|\tau|^{1+2\s}} \dfrac{2 \pi^{\frac{n-1}{2}}}{\Gamma(\frac{n-1}{2})} \int_0^\infty \dfrac{r^{n-2}}{(r^2+1)^{\frac{n+2\s}{2}}} \d r \\	
	& = c_{n,\s} \frac{\lambda}{|t|^{1+2\s}} 
	\dfrac{\pi^{\frac{n-1}{2}} \Gamma(\frac{1}{2}+\s)}{\Gamma(\frac{n}{2}+\s)} 
	= c_{n,\s} \frac{\lambda}{|t|^{1+2\s}} \frac{c_{1,\s}}{c_{n,\s}} = c_{1,\s} \frac{\lambda}{|t|^{1+2\s}},
	\end{align*}
	where we have used the explicit value of the normalizing constant for the fractional Laplacian,
	\begin{equation}
	\label{Eq:ConstantFracLaplacian}
	c_{n,\s} = \s\dfrac{2^{2\s} \Gamma(\frac{n}{2}+\s) }{\pi^{n/2} \Gamma(1-\s)},
	\end{equation}
	and the definition of the Beta and Gamma functions. The upper bound for $K_1$ is obtained in the same way. Note that the previous computation is an equality with $\lambda = 1$ in the case of the fractional Laplacian.
\end{proof}

In the proof of Theorem~\ref{Th:AsymptoticBehaviorSaddleSolution} we will use some properties of the layer solution $u_0$, defined in \eqref{Eq:LayerSolution}.
First, in \cite{CozziPassalacqua} it is proved that there exists a constant $C$ such that
\begin{equation}
\label{Eq:PropertiesLayer}
|u_0 (x)-\sign(x)| \leq C |x|^{-2\s}  \quad \text{ and } \quad |\dot{u}_0 (x)| \leq C |x|^{-1-2\s}  \quad \text{ for large }|x|.
\end{equation}
In our arguments we need also to show that the second derivative of the layer goes to zero at infinity. This is the first statement of the following lemma.

\begin{lemma}
	\label{Lemma:SecondDerivativeLayer}
	Let $K_1:\R \setminus \{0\} \to (0,+\infty)$ be a symmetric kernel satisfying \eqref{Eq:Ellipticity} and assume that it is decreasing in $(0,+\infty)$. Let $u_0$ be the layer solution associated to the kernel $K_1$, that is, $u_0$ solving \eqref{Eq:LayerSolution}. Then, 
	\begin{enumerate}[label=(\roman{*})]
		\item $\ddot{u}_0 (x) \to 0$ as $x\to \pm \infty$.	
		\item  $\ddot{u}_0 (x) < 0$ in $(0,+\infty)$.
	\end{enumerate}
\end{lemma}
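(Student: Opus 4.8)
\emph{Part (i).} The plan is a soft compactness argument. Given an arbitrary sequence $x_k\to+\infty$, consider the translates $v_k:=u_0(\,\cdot\,+x_k)$. By translation invariance each $v_k$ solves $L_{K_1}v_k=f(v_k)$ in $\R$, with $|v_k|\le1$, so by the interior regularity recalled in Section~\ref{Sec:Preliminaries} (Proposition~\ref{Prop:InteriorRegularity}, bootstrapped using $f\in C^2$) the $v_k$ are bounded in $C^{2,\beta}_{\loc}(\R)$ for some $\beta>0$, uniformly in $k$. On the other hand, by the decay estimate \eqref{Eq:PropertiesLayer} we have $v_k\to1$ locally uniformly; hence, by Arzelà--Ascoli, $v_k\to1$ in $C^2_{\loc}(\R)$, so $\ddot u_0(x_k)=\ddot v_k(0)\to0$. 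Since the sequence is arbitrary, $\ddot u_0(x)\to0$ as $x\to+\infty$, and the same argument (limit $-1$) gives $\ddot u_0(x)\to0$ as $x\to-\infty$.

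\emph{Part (ii).} I would set $\phi:=\dot u_0>0$ and $w:=\ddot u_0$. Differentiating $L_{K_1}u_0=f(u_0)$ once and twice (legitimate since $u_0$ is twice continuously differentiable and $L_{K_1}$ commutes with difference quotients) yields $L_{K_1}\phi=f'(u_0)\phi$ and $L_{K_1}w-f'(u_0)\,w=f''(u_0)\,\phi^2$ in $\R$. Note that $w$ is odd (because $u_0$ is odd) while $\phi$ is even, and that for $x>0$ one has $u_0(x)\in(0,1)$, hence $f''(u_0(x))\le0$ by the strict concavity of $f$ in $(0,1)$. The key manipulation is the substitution $w=\sigma\phi$ with $\sigma:=w/\phi$: plugging $\sigma(x)\phi(x)-\sigma(y)\phi(y)=\sigma(x)(\phi(x)-\phi(y))+\phi(y)(\sigma(x)-\sigma(y))$ into $L_{K_1}(\sigma\phi)$ produces the term $\sigma(x)L_{K_1}\phi(x)=f'(u_0(x))w(x)$, which cancels the $f'(u_0)w$ on the other side and leaves the \emph{weighted, purely second-order} identity
\[
\int_{\R}\phi(y)\,\big(\sigma(x)-\sigma(y)\big)\,K_1(x-y)\,dy=f''(u_0(x))\,\phi(x)^2\qquad(x\in\R).
\]
Since $\sigma$ is odd and $\phi$ even, reflecting the $\{y<0\}$ part turns this, for $x>0$, into
\[
\int_0^\infty\!\phi(y)\big(\sigma(x)-\sigma(y)\big)\big(K_1(x-y)-K_1(x+y)\big)\,dy+2\sigma(x)\!\int_0^\infty\!\phi(y)K_1(x+y)\,dy=f''(u_0(x))\phi(x)^2\le0.
\]

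This is exactly the structure handled by the maximum principle for odd functions (Proposition~\ref{Prop:MaximumPrincipleForOddFunctions}; its one-dimensional version with respect to the origin has an identical proof): the interaction kernel $\phi(y)\big(K_1(x-y)-K_1(x+y)\big)$ is nonnegative, and positive for $y>0$, because $K_1$ is decreasing and $|x-y|<x+y$ for $x,y>0$; and the zero-order coefficient $2\int_0^\infty\phi(y)K_1(x+y)\,dy$ is strictly positive. I would then conclude as in that proof: if $\sup_{[0,\infty)}\sigma>0$, then —using $\sigma(0)=0$ and $\limsup_{x\to+\infty}\sigma(x)\le0$— this supremum is attained at some $x_0>0$, and evaluating the displayed identity at $x_0$ makes the left-hand side positive while the right-hand side is $\le0$, a contradiction. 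Hence $\ddot u_0=\sigma\phi\le0$ in $(0,\infty)$. For strictness, if $\ddot u_0(x_1)=0$ at some $x_1>0$ then $\sigma(x_1)=0$ is a maximum of $\sigma$ over $[0,\infty)$, and the identity at $x_1$ forces $\int_0^\infty\phi(y)\sigma(y)\big(K_1(x_1-y)-K_1(x_1+y)\big)\,dy=0$; since $\sigma\le0$ in $(0,\infty)$, $\phi>0$, and $K_1(x_1-y)-K_1(x_1+y)>0$ for every $y>0$ (here one uses that $K_1$ is \emph{strictly} decreasing, i.e.\ \eqref{Eq:SqrtConvex} through Proposition~\ref{Prop:KernelsDimension}), we get $\sigma\equiv0$ in $(0,\infty)$, so $\dot u_0$ is constant there, contradicting $\dot u_0>0$ together with $\dot u_0\to0$ at $+\infty$.

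\emph{Main obstacle.} Everything above is essentially bookkeeping once the two derived identities are in hand; the genuinely technical input is the behavior at infinity, namely $\limsup_{x\to+\infty}\ddot u_0(x)/\dot u_0(x)\le0$, which guarantees that a hypothetical positive supremum of $\sigma$ is attained at an interior point. I would obtain this from the asymptotics of $u_0$: the upper bounds \eqref{Eq:PropertiesLayer} together with a matching lower bound $\dot u_0(x)\gtrsim|x|^{-1-2\s}$ and a faster-decaying bound on $\ddot u_0$, which follow from barrier constructions and the estimates of \cite{CozziPassalacqua} (and of \cite{CabreSireII} in the fractional case).
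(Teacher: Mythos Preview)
Your argument for part~(i) is correct and in fact cleaner than the paper's. The paper translates $\ddot u_0$ itself, passes to the limit in the twice-differentiated equation to obtain a bounded solution of $L_{K_1}w=f'(1)w$, and then invokes a decay lemma from \cite{CozziPassalacqua} together with the maximum principle to force $w\equiv 0$. You instead translate $u_0$, use the uniform $C^{2,\beta}$ estimates coming from Proposition~\ref{Prop:InteriorRegularity} and $f\in C^2$, and read off the conclusion from $v_k\to 1$ in $C^2_{\loc}$. This buys you a shorter proof that avoids the external lemma.

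For part~(ii), your algebraic manipulation with $\sigma=\ddot u_0/\dot u_0$ is correct and the resulting weighted identity is exactly the kind of structure that the odd maximum principle handles. The problem is precisely the step you label ``main obstacle'': to run the maximum-principle argument you must know that a hypothetical positive supremum of $\sigma$ on $(0,+\infty)$ is attained, and for this you need $\limsup_{x\to+\infty}\sigma(x)\le 0$. Since both $\ddot u_0$ and $\dot u_0$ tend to $0$, this is not automatic; it requires quantitative two-sided information, namely a lower bound $\dot u_0(x)\gtrsim |x|^{-1-2\s}$ together with a strictly faster decay of $\ddot u_0$. Neither of these is contained in \eqref{Eq:PropertiesLayer}, and for general kernels in $\lcal_0$ they are not established in \cite{CozziPassalacqua} (only upper bounds are); so as written this is a genuine gap, not bookkeeping.

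The paper circumvents the issue altogether by choosing a different positive comparison function: instead of the Jacobi field $\dot u_0$ (a \emph{solution} of the linearized equation), it uses $u_0$ itself, which is a strict \emph{supersolution} since $L_{K_1}u_0-f'(u_0)u_0=f(u_0)-f'(u_0)u_0>0$ by strict concavity. The crucial gain is that $u_0\ge\delta>0$ on $[\varepsilon,+\infty)$ for any $\varepsilon>0$, so the condition at infinity needed in the argument of Proposition~\ref{Prop:MaximumPrincipleLinearized} reduces to $\limsup_{x\to+\infty}\ddot u_0(x)\le 0$, which is exactly part~(i), and no ratio asymptotics are required. If you replace $\phi=\dot u_0$ by $u_0$ in your scheme (and follow the proof of Proposition~\ref{Prop:MaximumPrincipleLinearized} in one dimension with the narrow-band maximum principle of Proposition~\ref{Prop:MaxPrpNarrowOdd} on $(0,\varepsilon)$), the argument closes without extra input.
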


We prove here the first statement of this lemma, and we postpone the proof of the second one until the next section, since we need to use a maximum principle for the linearized operator $L_{K_1} - f'(u_0)$.

\begin{proof}[Proof of point (i) of Lemma~\ref{Lemma:SecondDerivativeLayer}]
	By contradiction, suppose that there exists an unbounded sequence $\{x_j\}$ satisfying $|\ddot{u}_0(x_j)|>\varepsilon$ for some $\varepsilon>0$. Note that by the symmetry of $u_0$ we may assume that $x_j\to + \infty$. Now define $w_j (x) := \ddot{u}_0(x+x_j)$. By differentiating twice the equation of the layer solution, we see that $\ddot{u}_0$ solves
	$$
	L_{K_1} \ddot{u}_0 = f''(u_0)\dot{u}_0^2 + f'(u_0)\ddot{u}_0 \quad \text{ in }\R.
	$$
	Hence, as $x_j \to +\infty$ a standard compactness argument combined with the asymptotic behavior given by \eqref{Eq:PropertiesLayer} yields that $w_j$ converges on compact sets to a function $w$ that solves
	$$
	L_{K_1}  w = f'(1)w \quad \text{ in }\R.
	$$
	In addition, since $|\ddot{u}_0(x_j)|>\varepsilon$ we have $|w(0)|\geq \varepsilon$.
	
	At this point we use Lemma~4.3 of \cite{CozziPassalacqua} to deduce that, since $f'(1)<1$, then $w\to 0$ as $|x| \to +\infty$. Therefore, if $w$ is not identically zero, it has either a positive maximum or a negative minimum, but this contradicts the maximum principle (recall that $f'(1)<1$). We conclude that $w\equiv0$ in $\R$, but this is a contradiction with $|w(0)|\geq \varepsilon$.
\end{proof}

Now we have all the ingredients to establish the asymptotic behavior of the saddle-solution. The proof follows exactly the same compactness arguments used to prove the analogous result in the local case (see \cite{CabreTerraII}) and for the fractional Laplacian using the extension problem (see \cite{Cinti-Saddle, Cinti-Saddle2}). Thus we will omit some details. The main ingredients too establish this results are the translation invariance of the operator, the Liouville type and symmetry results of Theorems~\ref{Th:LiouvilleSemilinearWholeSpace} and \ref{Th:SymmHalfSpace} and a stability argument (recall the comments in Section~\ref{Sec:Preliminaries}).

\begin{proof}[Proof of Theorem~\ref{Th:AsymptoticBehaviorSaddleSolution}]
	By contradiction, assume that the result does not hold. Then, there exists an $\varepsilon>0$ and an unbounded sequence $\{x_k\}$, such that
	\begin{equation}
	\label{Eq:ContradictionAsymptotic}
	|u(x_k)-U(x_k)|+|\nabla u(x_k)-\nabla U(x_k)|+|D^2u(x_k)-D^2U(x_k)| > \varepsilon.
	\end{equation}
	By the symmetry of $u$, we may assume without loss of generality that $x_k \in \overline{\ocal}$, and by continuity we can further assume $ x_k \notin \ccal$. 
	
	Let $d_k:=\dist(x_k,\ccal)$. We distinguish two cases:
	
	\textbf{Case 1: $\{d_k\}$ is an unbounded sequence.} In this situation, we may assume that $d_k \geq 2k$. Define
	$$
	w_k(x) := u(x+x_k), 
	$$
	which satisfies $0<w_k<1$ in $\overline{B_k}$ and
	$$
	L_K w_k = f(w_k) \ \textrm{ in } B_k.
	$$
	Letting $k\to +\infty$, by standard estimates for the operators of the class $\lcal_0$ (see Section~\ref{Sec:Preliminaries}) and the Arzelà-Ascoli theorem, we have that, up to a subsequence, $w_k$ converges on compact sets to a function $w$ which is a pointwise solution to
	$$
	\beqc{\PDEsystem}
	L_K  w &=& f(w) & \textrm{ in }\R^n\,,\\
	w &\geq& 0 & \textrm{ in } \R^n\,.
	\eeqc
	$$
	
	Then, by Theorem~\ref{Th:LiouvilleSemilinearWholeSpace}, either $w\equiv 0$ or $w\equiv 1$. First, note that $w$ cannot be zero. Indeed, since $w_k$ are stable with respect to perturbations supported in $B_k$ (see the comments in Section~\ref{Sec:Preliminaries} and Remark~\ref{Remark:uStableinO}), $w$ is stable in $\R^n$, which means that the linearized operator $L_K-f'(w)$ is a positive operator. Nevertheless, if $w\equiv 0$, then the linearized operator $L_K-f'(w) = L_K-f'(0)$ is negative for sufficiently large balls, since $f'(0)>0$ and the first eigenvalue of $L_K$ is of order $R^{-2\s}$ in balls of radius $R$ (as in Lemma~\ref{Lemma:FirstOddEigenfunction}, see Proposition~9 of \cite{ServadeiValdinoci}). Therefore $w\equiv 1$. 
	
	On the other hand, since $d_k\rightarrow +\infty$ and $U(x_k) =  u_0(d_k)$, we get by the properties of the layer solution that $U(x_k) \rightarrow 1$, $\nabla U(x_k) \rightarrow 0$ and $D^2U(x_k) \rightarrow 0$ ---see \eqref{Eq:PropertiesLayer} and Lemma~\ref{Lemma:SecondDerivativeLayer}. From this and condition \eqref{Eq:ContradictionAsymptotic} we get
	$$
	|u(x_k)-1|+|\nabla u(x_k)|+|D^2u(x_k)| > \varepsilon/2,
	$$
	for $k$ big enough. This yields that 
	$$
	|w_k(0)-1|+|\nabla w_k(0)|+|D^2w_k(0)| > \varepsilon/2,
	$$
	and this contradicts $w \equiv 1$. 
	
	\textbf{Case 2: $\{d_k\}$ is a bounded sequence.}
	In this situation, at least for a subsequence, we have that $d_k \rightarrow d$. Now, for each $x_k$ we define $x_k^0$ as its projection on $\ccal$. Therefore, we have that $ \nu_k^0 := (x_k-x_k^0)/d_k$ is the unit normal to $\ccal$. Through a subsequence, $ \nu_k^0 \rightarrow \nu$ with $|\nu|=1$.
	
	We define
	$$ w_k (x) := u(x+x_k^0), $$
	which solves
	$$ L_K  w_k = f(w_k) \ \text{in } \R^n. $$
	Similarly as before, by letting $k\to +\infty$, up to a subsequence $w_k$ converges on compact sets to a function $w$ which is a pointwise solution to
	$$
	\beqc{\PDEsystem}
	L_K  w &=& f(w)  &\textrm{ in } H:=\{x\cdot \nu >0\}\,,\\
	w &\geq& 0  &\textrm{ in } H\,,\\
	\,\,w \text{ is odd with respect to } H. \span\span\span \,
	\eeqc
	$$
	For the details about the fact that $\ocal+x_k^0 \rightarrow H$, see \cite{CabreTerraII}.
	
	As in the previous case, by stability $w$ cannot be zero, and thus $w>0$ in $H$ (by the strong maximum principle for odd functions with respect to a hyperplane, see \cite{ChenLiLi}). Hence, by Theorem \ref{Th:SymmHalfSpace}, $w$ only depends on $x\cdot \nu$ and is increasing. Finally, by the uniqueness of the layer solution, $w(x) = u_0(x\cdot \nu)$ and
	\begin{align*}
	u(x_k) &= w_k(x_k-x_k^0) = w(x_k-x_k^0) + \mathrm{o}(1) \\
	&= u_0((x_k-x_k^0)\cdot \nu) + \mathrm{o}(1) = u_0((x_k-x_k^0)\cdot \nu_k^0) + \mathrm{o}(1) \\
	&= u_0(d_k |\nu_k^0|^2) + \mathrm{o}(1) = u_0(d_k) + \mathrm{o}(1) = U (x_k) + \mathrm{o}(1),
	\end{align*}
	contradicting \eqref{Eq:ContradictionAsymptotic}. The same is done for $\nabla u$ and $D^2 u$.
\end{proof}

\begin{remark}
	\label{Remark:u>delta}
	The previous result yields that, for $\varepsilon>0$ the saddle-shaped solution satisfies $u\geq\delta$ in the set $\ocal_\varepsilon := \{(x',x'')\in \R^m\times\R^m \ : \ |x''|+\varepsilon <|x'| \}$, for some positive constant $\delta$. That is, thanks to the asymptotic result, and since $U(x)\geq u_0(\varepsilon/\sqrt{2})$ for $x\in \ocal_\varepsilon$, there exists a radius $R>0$ such that $u(x)\geq U(x)/2\geq u_0(\varepsilon/\sqrt{2})/2$ if $x\in \ocal_\varepsilon\setminus B_R$. Moreover, since $u$ is positive in the compact set $\overline{\ocal_\varepsilon}\cap\overline{B_R}$ it has a positive minimum in this set, say $m>0$. Therefore, if we choose $\delta = \min\{m,u_0(\varepsilon/\sqrt{2})/2\}$ we obtain the desired result.
\end{remark}

\section{Maximum principles for the linearized operator}
\label{Sec:MaximumPrinciple}

In this section we show that the linearized operator $L_K  -f'(u)$ satisfies the maximum principle in $\ocal$. This, combined with the asymptotic result of Theorem~\ref{Th:AsymptoticBehaviorSaddleSolution}, yields the uniqueness of the saddle-shaped solution.

In order to prove the maximum principle of Proposition~\ref{Prop:MaximumPrincipleLinearized}, we need a maximum principle in ``narrow'' sets, stated next.

\begin{proposition}
	\label{Prop:MaximumPrincipleNarrowDomainsOdd}
	Let $\varepsilon>0$ and let
	$$
	\ncal_\varepsilon \subset \{(x',x'')\in \R^m\times\R^m \ : \ |x''|<|x'|<|x''|+ \varepsilon\} \subset \ocal
	$$ 
	be an open set (not necessarily bounded).  
	Let $K$ be a radially symmetric kernel satisfying the positivity condition \eqref{Eq:KernelInequality} and such that $L_K\in \lcal_0$. Let $v\in C(\overline{\ncal_\varepsilon})\cap C^\alpha(\ncal_\varepsilon)\cap L^1_\s(\R^{2m})$, for some $\alpha > 2\s$, be a doubly radial function satisfying
	\begin{equation}
	\label{Eq:AssumptionsMaxPNarrow}
	\beqc{\PDEsystem}
	L_K v + c(x)v&\leq & 0 &\textrm{ in } \ncal_\varepsilon\,,\\
	v &\leq & 0 &\textrm{ in } \ocal \setminus \ncal_\varepsilon\,,\\
	- v(x^\star) & = & v(x) &\textrm{ in } \R^{2m},\\
	\ds \limsup_{x\in \ncal_\varepsilon, \ |x|\to \infty} v(x) &\leq & 0\,,
	\eeqc
	\end{equation}
	with $c$ a function bounded by below.
	
	Under these assumptions there exists $\overline{\varepsilon}>0$ depending only on $\lambda, m, \s$ and $\norm{c_-}_{L^\infty}$ such that, if $\varepsilon<\overline{\varepsilon}$, then $v \leq 0$ in $\ncal_\varepsilon$. 
\end{proposition}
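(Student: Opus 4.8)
The plan is to reduce, inside the narrow region $\ncal_\varepsilon$, the differential inequality in \eqref{Eq:AssumptionsMaxPNarrow} to one to which the weak maximum principle for doubly radial odd functions (Proposition~\ref{Prop:MaximumPrincipleForOddFunctions}) applies directly. In contrast with the hyperplane version of this statement, Proposition~\ref{Prop:MaxPrpNarrowOdd}, here no ABP-type estimate is needed: the gain comes for free from the two-sided bound \eqref{Eq:ZeroOrderTerm}, which shows that the zero-order term contained in $L_K$ when acting on odd functions blows up like $\dist(x,\ccal)^{-2\s}$, while on $\ncal_\varepsilon$ this distance is of order at most $\varepsilon$. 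Hence, for $\varepsilon$ small, this large positive zero-order term absorbs the (only bounded-below) coefficient $c$.

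Concretely, I would first rewrite the hypothesis using \eqref{Eq:OperatorOddF}. Setting
$$
\kappa(x) := 2\int_{\ocal}\overline{K}(x,y^\star)\d y, \qquad
\mathcal I v(x) := \int_{\ocal}\big(v(x)-v(y)\big)\big(\overline K(x,y)-\overline K(x,y^\star)\big)\d y,
$$
we have $L_K v = \mathcal I v + \kappa\,v$ and, by \eqref{Eq:KernelInequality}, the operator $\mathcal I$ has a strictly positive kernel on $\ocal\times\ocal$. Then the first line of \eqref{Eq:AssumptionsMaxPNarrow} becomes
$$
\mathcal I v(x) + \big(\kappa(x)+c(x)\big)\,v(x) \le 0 \qquad\text{in } \ncal_\varepsilon .
$$
Since $\ncal_\varepsilon\subset\{|x''|<|x'|<|x''|+\varepsilon\}$ and the distance to the Simons cone equals $\dist(x,\ccal)=(|x'|-|x''|)/\sqrt 2$ (Lemma~4.2 in \cite{CabreTerraII}), every $x\in\ncal_\varepsilon$ satisfies $\dist(x,\ccal)<\varepsilon$; combining this with the lower bound in \eqref{Eq:ZeroOrderTerm} — whose constant, for that bound, involves only $m,\s$ and the lower ellipticity constant $\lambda$, since only $K\ge\lambda c_{n,\s}|\cdot|^{-n-2\s}$ enters — yields $\kappa(x)\ge c_0\,\varepsilon^{-2\s}$ in $\ncal_\varepsilon$ for some $c_0=c_0(m,\s,\lambda)>0$.

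Next, I would choose $\overline\varepsilon>0$ so that $c_0\,\overline\varepsilon^{-2\s}\ge\norm{c_-}_{L^\infty(\ncal_\varepsilon)}$; then, for any $\varepsilon<\overline\varepsilon$, one has $\kappa(x)+c(x)\ge\kappa(x)-c_-(x)>0$ throughout $\ncal_\varepsilon$. Thus $v$ satisfies in $\ncal_\varepsilon$ an inequality $\mathcal I v+(\kappa+c)\,v\le 0$ in which the integro-differential part has a positive kernel and the zero-order coefficient is strictly positive, together with $v\le 0$ in $\ocal\setminus\ncal_\varepsilon$ and $\limsup_{x\in\ncal_\varepsilon,\,|x|\to\infty}v(x)\le 0$. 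This is exactly the setting of the weak maximum principle part of Proposition~\ref{Prop:MaximumPrincipleForOddFunctions}: its proof is a contradiction argument at a point where $-v$ would attain a negative minimum, and it uses only the positivity of the total zero-order coefficient (here $\kappa+c$, which is merely positive rather than bounded) and, when $\ncal_\varepsilon$ is unbounded, the decay of $v$ at infinity. Running that argument gives $v\le 0$ in $\ncal_\varepsilon$, which is the claim; the constant $\overline\varepsilon$ depends only on $\lambda,m,\s$ and $\norm{c_-}_{L^\infty}$, as announced.

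The part I expect to need the most care is not the absorption estimate itself — which is short and is precisely the ``nonlocal'' simplification highlighted in the introduction — but rather the application of the maximum principle at the inner boundary, where $\ncal_\varepsilon$ may touch $\ccal$: there $\kappa=+\infty$ and $v$ is only assumed continuous (not $C^\alpha$) up to the boundary, so one must invoke the version of Proposition~\ref{Prop:MaximumPrincipleForOddFunctions} with weakened interior regularity described in Remark~\ref{Remark:MaxPrincipleSingularity} — the operator taking the value $+\infty$ at such points being consistent with the inequality — and check that the extremal-point argument still closes. Everything else is a direct combination of \eqref{Eq:OperatorOddF}, \eqref{Eq:ZeroOrderTerm}, \eqref{Eq:KernelInequality} and Proposition~\ref{Prop:MaximumPrincipleForOddFunctions}.
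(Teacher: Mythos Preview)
Your approach is correct and is essentially the same as the paper's. The paper carries out the contradiction argument directly at an interior maximum point $x_0\in\ncal_\varepsilon$: it expands $L_K v(x_0)$ via \eqref{Eq:OperatorOddF}, drops the nonnegative integral term using \eqref{Eq:KernelInequality}, and then uses the lower bound in \eqref{Eq:ZeroOrderTerm} together with $\dist(x_0,\ccal)\le\varepsilon/\sqrt2$ to obtain $0\ge M\big(C^{-1}\varepsilon^{-2\s}-\norm{c_-}_{L^\infty}\big)$, a contradiction for small $\varepsilon$. This is exactly the computation underlying your reduction to Proposition~\ref{Prop:MaximumPrincipleForOddFunctions}; you simply package it by observing that once $\kappa+c>0$ one is in the setting of that proposition's proof. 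Your closing worry about the inner boundary is unnecessary: since $v$ is odd with respect to $\ccal$ and continuous on $\overline{\ncal_\varepsilon}$, $v=0$ on $\overline{\ncal_\varepsilon}\cap\ccal$, so a positive supremum must be attained at an interior point of $\ncal_\varepsilon$, where $v\in C^\alpha$ with $\alpha>2\s$; Remark~\ref{Remark:MaxPrincipleSingularity} plays no role here.
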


\begin{proof}
	Assume, by contradiction, that
	$$
	M := \sup_{\ncal_\varepsilon} v > 0\,.
	$$
	Under the assumptions \eqref{Eq:AssumptionsMaxPNarrow}, $M$ must be attained at an interior point $x_0 \in \ncal_\varepsilon$. Then,
	\begin{equation}
	\label{Eq:InequalitiesMaxPNarrowProof}
	0 \geq L_K  v(x_0) + c(x_0)v(x_0) \geq L_K  v(x_0) - \norm{c_-}_{L^\infty(\ncal_\varepsilon)}M\,.
	\end{equation} 
	Now, we compute $L_K  v(x_0)$. Since $v$ is doubly radial and odd with respect to the Simons cone, we can use the expression \eqref{Eq:OperatorOddF} to write
	\begin{align*}
	L_K v(x_0) &= \int_{\ocal} \big (M - v(y) \big) \big (\overline{K}(x_0,y) -\overline{K}(x_0,y^\star)\big) \d y + 2M\int_{\ocal} \overline{K}(x_0,y^\star)\d y\\
	&\geq2M \int_{\ocal} \overline{K}(x_0,y^\star)\d y,
	\end{align*}
	where the inequality follows from being $M$ the supremum of $v$ in $\ocal$ and the kernel inequality \eqref{Eq:KernelInequality}. Combining this last inequality with \eqref{Eq:InequalitiesMaxPNarrowProof}, we obtain
	$$
	0 \geq L_K  v(x_0) + c(x_0)v(x_0)  \geq M \left(  2 \int_{\ocal} \overline{K}(x_0,y^\star)\d y - \norm{c_-}_{L^\infty(\ncal_\varepsilon)}
	\right)\,.
	$$
	
	Finally, if we use the lower bound of \eqref{Eq:ZeroOrderTerm} and the fact that $\dist(x_0,\ccal) \leq \varepsilon/\sqrt{2}$, we get
	\begin{align*}
	0 &\geq M \left( 2 \int_{\ocal} \overline{K}(x_0,y^\star)\d y - \norm{c_-}_{L^\infty(\ncal_\varepsilon)}
	\right ) \geq M \left(\frac{1}{C}\dist(x_0,\ccal)^{-2\s}-\norm{c_-}_{L^\infty(\ncal_\varepsilon)}\right) \\ &\geq M \left(\frac{1}{C}\varepsilon^{-2\s}-\norm{c_-}_{L^\infty(\ncal_\varepsilon)}\right).
	\end{align*}
	Therefore, for $\varepsilon$ small enough, we arrive at a contradiction that follows from assuming that the supremum is positive.
\end{proof}

\begin{remark}
	Using same arguments as in the proof of Proposition~\ref{Prop:MaxPrpNarrowOdd}, the previous result can be extended to general doubly radial ``narrow'' sets (that is, assuming that the set $\ncal_\varepsilon$ in the statement of 	Proposition~\ref{Prop:MaximumPrincipleNarrowDomainsOdd} satisfies \eqref{Eq:DefNarrow}, instead of just being contained in an $\varepsilon$-neighborhood of the cone). 
	Indeed, we only need to replace the symmetry with respect to a hyperplane by the symmetry with respect to the Simons cone and use the kernel inequality \eqref{Eq:KernelInequality} ---note that in this case, the assumption at infinity in \eqref{Eq:AssumptionsMaxPNarrow} is not needed. 
	Nevertheless, we preferred to present the result for sets that are contained in an $\varepsilon$-neighborhood of the Simons cone, since we are only going to use the maximum principle in such sets. 
	In addition, the crucial fact that the sets are contained in $\{(x',x'')\in \R^m\times\R^m \ : \ |x''|<|x'|<|x''|+ \varepsilon\}$ makes the argument rather simple.
\end{remark}

Once this maximum principle in ``narrow'' sets is available, we can proceed with the proof of Proposition~\ref{Prop:MaximumPrincipleLinearized}.

\begin{proof}[Proof of Proposition~\ref{Prop:MaximumPrincipleLinearized}]

	For the sake of simplicity, we will denote 
	$$
	\mathscr{L} w := L_K w - f'(u)w - cw\,.
	$$
	A crucial point in this proof is that $u$ is a positive supersolution of the operator $\mathscr{L}$. Indeed, since $f$ is strictly concave in $(0,1)$ and $f(0)=0$, then $f'(\tau)\tau<f(\tau)$ for all $\tau>0$, and thus 
	\begin{equation}
	\label{Eq:uSupersolLinearized}
	\mathscr{L} u = L_K u - f'(u)u - cu \geq f(u) - f'(u)u > 0 \quad \textrm{ in } \Omega \subset \ocal\,,
	\end{equation}
	where in the first inequality we have used that $u>0$ in $\ocal$ and that $c\leq 0$.

	By contradiction, assume that there exists $x_0\in \Omega$ such that $v(x_0)> 0$. We will show next that, if we assume this, we deduce $v\leq 0$ in $\Omega$, arriving at a contradiction.

	Let $\varepsilon > 0$ be such that the maximum principle of Proposition~\ref{Prop:MaximumPrincipleNarrowDomainsOdd} is valid and define the following sets:
	$$
	\Omega_\varepsilon := \Omega \cap \{|x'| > |x''| + \varepsilon\}\quad \textrm{ and } \quad 
	\ncal_\varepsilon := \Omega \cap \{|x''| < |x'| < |x''| + \varepsilon\}\,.
	$$
	Define also, for $\tau \geq 0$, 
	$$
	w := v - \tau u.
	$$
	
	First, we claim that $w\leq 0$ in $\Omega$ if $\tau$ is big enough. To see this, note first that by the asymptotic behavior of the saddle-shaped solution, we have 
	\begin{equation}
	\label{Eq:u>delta}
	u \geq \delta > 0 \quad \textrm{ in } \overline{\Omega}_\varepsilon\,,
	\end{equation}
	for some $\delta >0$ (see Remark~\ref{Remark:u>delta}). Therefore, $w < 0$ in $\overline{\Omega}_\varepsilon$ if $\tau$ is big enough. Moreover, since $v\leq 0$ in $\ocal\setminus\Omega$, we have 
	$$
	w \leq 0 \quad \textrm{ in } \ocal \setminus \ncal_\varepsilon\,.
	$$
	Furthermore, it also holds
	$$
	\limsup_{x\in \ncal_\varepsilon, \ |x|\to \infty} w(x) \leq 0
	$$
	and, by \eqref{Eq:uSupersolLinearized},
	$$
	\mathscr{L} w = \mathscr{L} v - \tau \mathscr{L} u \leq 0 \textrm{ in } \ncal_\varepsilon\,.
	$$
	Thus, since $w$ is odd with respect to $\ccal$, we can apply Proposition~\ref{Prop:MaximumPrincipleNarrowDomainsOdd} in $\ncal_\varepsilon$ to deduce that
	$$
	w \leq 0 \quad \textrm{ in } \Omega\,,
	$$
	if $\tau$ is big enough.
	
	Now, define 
	$$
	\tau_0:= \inf \setcond{\tau > 0}{v - \tau u \leq 0 \ \textrm{ in } \Omega}.
	$$
	By the previous claim, $\tau_0$ is well defined. Moreover, it is easy to see that $\tau_0 > 0$. Indeed, it is obvious $v - \tau_0 u \leq 0 $ in $\Omega$ and thus, since $v(x_0)>0$, we have $-\tau_0 u(x_0) < v(x_0) - \tau_0 u (x_0) \leq 0$. Using that $u(x_0)>0$, it follows that  $\tau_0 > 0$.
	
	We claim that $v - \tau_0 u \not \equiv 0$. Indeed, if $v - \tau_0 u \equiv 0$ then $v = \tau_0 u$ and thus, by using \eqref{Eq:uSupersolLinearized}, the equation for $v$, and the fact that $\tau_0 > 0$, we get 
	$$
	0 \geq \mathscr{L} v(x_0) = \tau_0 \mathscr{L} u(x_0) > 0\,, 
	$$
	which is a contradiction.
	
	Then, since $v - \tau_0 u \not \equiv 0$, the strong maximum principle for odd functions (see Proposition~\ref{Prop:MaximumPrincipleForOddFunctions}) yields
	$$
	v - \tau_0 u < 0 \quad \textrm{ in }\Omega\,.
	$$
	Therefore, by continuity, the assumption on $v$ at infinity and \eqref{Eq:u>delta}, there exists $0 < \eta <\tau_0$ such that 
	$$
	\tilde{w} := v - (\tau_0 - \eta) u < 0 \quad \textrm{ in }\overline{\Omega}_\varepsilon\,.
	$$
	Note that here we used crucially \eqref{Eq:u>delta}, and this is the reason for which we needed to introduce the sets $\Omega_\varepsilon$ and $\ncal_\varepsilon$. Using again the maximum principle in ``narrow'' sets with $\tilde{w}$ in $\ncal_\varepsilon$, we deduce that 
	$$
	v - (\tau_0 - \eta) u \leq 0 \quad \textrm{ in }\Omega\,,
	$$
	and this contradicts the definition of $\tau_0$. Hence, $v\leq 0$ in $\Omega$ and, as we said, this contradicts our initial assumption on the existence of a point $x_0$ where $v(x_0)>0$.
\end{proof}

Note that if in the previous result we assume that $\partial\Omega \cap \ccal$ is empty, then $\Omega$ is at a positive distance to the cone and the lower bound on $u$ in \eqref{Eq:u>delta} holds in $\Omega$. In this case no maximum principle in ``narrow'' sets is required in the previous argument. Instead, if we want to consider sets with $\partial\Omega \cap \ccal \neq \varnothing$, we need to introduce the set $\Omega_\varepsilon$ to have the uniform lower bound \eqref{Eq:u>delta} and be able to carry out the proof.

The same argument used in the previous proof can be used to establish the remaining statement of Lemma~\ref{Lemma:SecondDerivativeLayer}.

\begin{proof}[Proof of point (ii) of Lemma~\ref{Lemma:SecondDerivativeLayer}]
	
	Let $v = \ddot{u}_0$. First we show that $v\leq 0$ in $(0,+\infty)$. To see this, note that since $f$ is concave and by point (i) of Lemma~\ref{Lemma:SecondDerivativeLayer}, we have that
	$$
	\beqc{\PDEsystem}
	L_{K_1} v - f'(u_0)v &\leq &0 & \text{ in } (0,+\infty)\,.\\
	v(x) &= &-v(-x) & \text{ for every } x\in \R\,,\\
	\ds \limsup_{x\to +\infty} v(x) &= & 0\,.
	\eeqc
	$$
	Now, we follow the proof of Proposition~\ref{Prop:MaximumPrincipleLinearized} but with the previous problem, replacing $u$ by $u_0$ and using that
	$$
	L_{K_1} u_0 - f'(u_0)u_0 > 0 \quad \text{ in } (0,+\infty)\,. 
	$$
	All the arguments are the same, using the maximum principle of Proposition~\ref{Prop:MaxPrpNarrowOdd} in the set $(0,\varepsilon)$, and yield that $v\leq 0$ in $(0,+\infty)$.
	
	The fact that $\ddot{u}_0 = v < 0$ in $(0,+\infty)$ can be readily deduced from the strong maximum principle for odd functions in $\R$, as follows. Suppose by contradiction that there exists a point $x_0\in (0,+\infty)$ such that $v(x_0) = 0$. Then,
	\begin{align*}
	0 &\geq L_{K_1} v (x_0) = - \int_{-\infty}^{+\infty}	v(y) K_1(x_0 - y) \d y \\
	&= - \int_{-\infty}^{+\infty} v(y) \big( K_1(x_0 - y) - K_1(x_0 + y)\big) \d y > 0\,,
	\end{align*}
	arriving at a contradiction. Here we have used that $v\not \equiv 0$ and the fact that $K_1$ is decreasing in $(0,+\infty)$, which yields $K_1(x - y) \geq  K_1(x + y)$ for every $x>0$ and $y>0$.
\end{proof}

\section*{Acknowledgements}

The authors thank Xavier Cabré for his guidance and useful discussions on the topic of this paper. They also thank Matteo Cozzi for interesting discussions on some parts of this article.

\bibliographystyle{amsplain}
\bibliography{biblio}

\end{document}